\documentclass[10pt]{article}

\usepackage{amsthm,amsfonts,amsmath,amscd,amssymb,times,epsfig,verbatim}
\usepackage[all]{xy}
\usepackage{enumerate,array}

\parindent=0pt
\parskip=4pt

\usepackage{color}

\newtheorem{thm}{Theorem}[section]
\newtheorem{prop}[thm]{Proposition}
\newtheorem{lem}[thm]{Lemma}
\newtheorem{cor}[thm]{Corollary}

\theoremstyle{remark}
\newtheorem{rem}[thm]{Remark}
\theoremstyle{definition}
\theoremstyle{axiom}
\newtheorem{ax}{Axiom}
\newtheorem{defn}[thm]{Definition}
\newtheorem{ex}[thm]{Example}

\newcommand{\C}{\mathbb{C}}

\newcommand{\R}{\mathbb{ R}}

\newcommand{\rk}{\operatorname{rk}}

\newcommand{\Z}{\mathbb{ Z}}

\newcommand{\sg}{\mathrm{sign}}

\title{The foundations of $(2n,k)$-manifolds }
\author{Victor M.~Buchstaber and Svjetlana Terzi\'c}
%\address{Steklov Mathematical Institute, Russian Academy
%of Sciences, Gubkina Street 8, 119991 Moscow, Russia}
%\email{buchstab@mi.ras.ru}

%\address{Faculty of Science, University of Montenegro,
%D\v zord\v za Va\v singtona bb, 81000 Podgorica, Montenegro}
%\email{sterzic@ac.me}
\begin{document}

\maketitle
\begin{abstract}
In the focus of our paper is a system of axioms that  serves as a basis  for introducing  structural data for $(2n,k)$-manifolds $M^{2n}$, where $M^{2n}$ is a smooth, compact $2n$-dimensional manifold with a smooth effective action of the $k$-dimensional torus $T^k$. In terms of these data  a construction of the model space $\mathfrak{E}$ with an action of the torus $T^k$ is given, such that there exists a  $T^k$-equivariant homeomorphism $\mathfrak{E}\to M^{2n}$. This homeomorphism induces a homeomorphism $\mathfrak{E}/T^k\to M^{2n}/T^k$.
%In the  theory of $(2n,k)$-manifolds we  consider compact, closed, oriented $2n$-dimensional manifolds $M^{2n}$, with an %effective action of the $k$-dimensional compact  torus $T^k$,   where $k\leq n$, that have  the a  finite number of  fixed points. %We present  the foundation axioms for this theory  and prove that the structural data  given by these
%axioms allow to describe the equivariant topology of $M^{2n}$ and  the orbit space $M^{2n}/T^k$.
The number $d=n-k$ is called the complexity of an  $(2n,k)$-manifold. Our theory comprises   toric geometry and toric topology, where $d=0$. It is shown that  the class  of homogeneous spaces  $G/H$ of compact Lie groups, where $\rk G=\rk H$,  contains $(2n,k)$-manifolds that  have non zero complexity. The results are demonstrated on the complex Grassmann manifolds $G_{k+1,q}$ with an effective action of the torus $T^k$.
% As an application we obtain the results about the structure of some well known $(2n,k)$-manifolds of complexity $2$.
\footnote{MSC 2000: 57R19, 58E40, 57R91, 52B40; keywords: toric topology, manifolds with torus action, orbit space, moment map, complex Grassmann manifold}
\end{abstract}

\tableofcontents

.

%%%%%%%%%%%%%%%%%   1

\section{Introduction}

The goal of this  paper is to extend the issues  from~\cite{BTO}, specify the axioms and present the new results of our theory of   $(2n,k)$ - manifolds. It is about  a wide class of $2n$ - dimensional smooth,  compact, oriented manifolds with an effective action of the  compact torus $T^k$ having only isolated fixed points. We propose the tools for an effective description of  the equivariant  structure of such manifolds as well as the structure of their orbit spaces.

This class contains toric and quasitoric manifolds $M^{2n}$ , whose effective description in toric topology (see~\cite{BP}) is given by the  combinatorial data  $(P^{n},\Lambda)$, where $P^{n}$ is a $n$-dimensional  simple polytope and $\Lambda$ is a characteristic function from the set of facets of the polytope $P^{n}$ to the lattice $\Z ^{n}$ that  satisfies Davis-Januszkiewicz (*) condition~\cite{DJ-91}. In this case,  a   $(2n,n)$-manifold is obtained, while  the orbit space $M^{2n}/T^n$ is homeomorphic to the polytope $P^{n}$. In this paper are described the key examples of the manifolds $M^{2n}$ with an effective action  of the torus $T^k$ which form a basis for the theory of $(2n,k)$-manifolds.

Any $(2n,k)$-manifold  is equipped with  the so called  almost moment map $\mu : M^{2n}\to \R ^k$, whose image is a convex polytope $P^k$.  The polytope  $P^k$ does not need to be simple  and the orbit space $M^{2n}/T^k$  for $k<n$ is not homeomorphic to $P^k$ in general.  For the  Grassmann manifolds $G_{k+1,q},$ the polytope  $P^k$ is the hypersimplex $\Delta _{k+1,q}$ and for the complex flag manifolds $F_{k+1}$,  the polytope $P^{k}$ is the permutahedron $P{e}^{k}$. Note that $\Delta _{k+1, q}$ is a simple polytope only for $q=1$ or $q=k$, while $P{e}^{k}$ is a simply polytope for any $k$.

 % It contains as well the homogeneous spaces of  compact Lie groups among which in the focus of our interest are the  complex %Grassmann manifolds $G_{k+1,q}$  and the complex flag manifolds $F_{k+1}$, with the standard action of the torus $T^{k+1}$ %which induces  the effective action of the torus $T^k$.

One of the main tools, which  we introduce, is a family of admissible polytopes $P_{\sigma}$ which   are spanned by some subsets $\sigma$ of vertices of the polytope $P^k$. In the quasitoric case the family of admissible polytopes coincides with the family of the  faces of
the simple polytope $P^k$ including  the polytope $P^{k}$. In the case of Grassmann manifolds $G_{k+1,q}$ our admissible polytopes coincide with the admissible polytopes from the paper~\cite{GS} in which  a number of results about these polytopes are obtained .

To each admissible polytope $P_{\sigma}$ corresponds a $T^k$-invariant subspace $W_{\sigma}\subset M^{2n},$ called the stratum,   and a  subtorus $T_{\sigma}\subseteq T^k$ which acts {\it  trivially} on $W_{\sigma}$ such that the torus $T^{\sigma}=T^{k}/T_{\sigma}$,   $\dim T^{\sigma}=\dim P_{\sigma}$  acts freely on $W_{\sigma}$. The polytope $P^k$  is considered to be an admissible polytope and the corresponding  stratum $W$ is called the  main stratum. One of our axioms requires  that $\mu (W_{\sigma}) = \stackrel{\circ}{P_{\sigma}}$  and  that the  restriction of the induced almost moment map $\hat{\mu}$ to $W_{\sigma}/T^{\sigma}$ is a  projection of a fiber bundle with the base    $\stackrel{\circ}{P_{\sigma}}$. This implies that $W_{\sigma}/T^{\sigma} \cong \stackrel{\circ}{P_{\sigma}}\times F_{\sigma}$, for some topological space $F_{\sigma}$, which is called the  space of parameters of the stratum $W_{\sigma}$.  
%The family of spaces $W_{\sigma}$ provides  the stratification of $M^{2n}$,  that is $M^{2n}$ is the disjoint union of all 
%$W_{\sigma}$. 
In the case of Grassmann manifolds, our strata   $W_{\sigma}$ coincide with the  strata of  Gel'fand, MacPherson, Goresky and Serganova, which they introduced using the action  of the algebraic torus $(\C ^{*})^{k+1}$ on $G_{k+1,q}$. In this case, the subtorus $(\C ^{*})^{\sigma}$ acts freely on $W_{\sigma}$  and $F_{\sigma} = W_{\sigma}/(\C ^{*})^{\sigma}$. Note that in general case, an action of the compact torus  $T^k$ on a $(2n,k)$-manifold $M^{2n}$ does not extend to an effective action of the algebraic torus $(\C ^{*})^{k}$. 

The union of all admissible polytopes $P_{\sigma}$ is the polytope $P^k$  and to each point $x\in P^k$ we assign the cort\'ege $\sigma (x) = \{ P_{\sigma} : x\in \stackrel{\circ}{P_{\sigma}}\}$. We assume  that $\mu$ is a smooth map and obtain that  if  $\dim P_{\sigma}=k$ for any $P_{\sigma}\in \sigma (x)$ then  $x$ is a regular value for $\mu$.

Towards  our goal  to describe the  equivariant topology of an $(2n,k)$-manifold   $M^{2n}$ and its orbit space  $M^{2n}/T^k$,  we introduce  structural data and  obtain a  {\it model}  for the space  $M^{2n}/T^k$ in  terms of these  structural data. Our structural data consist of the  {\it virtual} spaces of parameters $\tilde{F_{\sigma}}$ together with the continuous projections $p_{\sigma} : \tilde{F}_{\sigma} \to F_{\sigma}$, and the  {\it universal} space of parameters $\mathcal{F}$  together with the  embeddings  $I_{\sigma} : \tilde{F}_{\sigma} \to \mathcal{F}$.    Note that  for the main stratum, the virtual space of parameters $\tilde{F}$ coincides with the space of parameters $F$. We use the fact that the  orbit space  of the main stratum $W/T^k \cong \stackrel{\circ}{P^{k}}\times F$ is  a dense set in $M^{2n}/T^k$.  It is 
required that the compactification of  $\stackrel{\circ}{P^{k}}\times F$, which corresponds to the compactification $\overline{W/T^k}= M^{2n}/T^k$, is realized by the topology of the {\it universal} space of parameters $\mathcal{F}$, which is a compactification   of the space of parameters $F$ of the main stratum such that $\mathcal{F}= \cup _{\sigma}I_{\sigma}(\tilde{F}_{\sigma})$.   We realized in detail this approach in~\cite{BT-1} for the Grassmann manifold $G_{4,2}$ and in~\cite{BTN} for the Grassmann manifold $G_{5,2}$. In general case, we obtain the orbit spaces  $M^{2n}/T^k$ as a   quotient space of the union  $\cup _{\sigma} (\stackrel{\circ}{P_{\sigma}}\times \tilde{F}_{\sigma})$ by an equivalence relation which is defined in terms of the structural maps $I_{\sigma}: \tilde{F}_{\sigma}\to \mathcal{F}$ and $p_{\sigma} : \tilde{F}_{\sigma}\to F_{\sigma}$.
%The idea for the virtual spaces of parameters  $\tilde{F}_{\sigma}$  and the projections $p_{\sigma}$  arises  from the %decomposition
%$ \cup _{\sigma}W_{\sigma}/T^{\sigma} = \overline{\stackrel{\circ}{P^{k}}\times F}$  and  the structural  result  that %%$W_{\sigma}/T^{\sigma}\cong \stackrel{\circ}{P_{\sigma}}\times F_{\sigma}$.
%In this way we obtain the orbit space  $M^{2n}/T^k$ as the quotient space of $\cup _{\sigma} (\stackrel{\circ}{P_{\sigma}}\times %\tilde{F}_{\sigma})$ by the equivalence relation, which  is defined in terms of the structural maps $\tilde{F}_{\sigma}\to \mathcal{F}%$ and $p_{\sigma} : \tilde{F}_{\sigma}\to F_{\sigma}$.

The complexity of an $(2n,k)$-manifold is defined to be  the number $d=n-k$. Our definition of the complexity of an action generalizes the definition of the complexity of an  algebraic torus action $(\C ^{*})^{k}$  in algebraic geometry and symplectic geometry.    The  complexity $1$ torus actions,  under some appropriate assumptions,   are widely studied in algebraic  and symplectic geometry (see ~\cite{T, K}).  In the recent  paper~\cite{AA}, inspired by our work~\cite{BT-1, BTN},  the approach  is described  for  solving the classification problem of complexity $1$ torus actions in terms of equivariant topology. The  problem of the torus actions of complexity $2$ or more is still not well understood and, in the literature,   it is considered   to be quite difficult. Our theory leads  to the results in this direction. In the papers ~\cite{BT-1, BTN}  the orbit spaces $\C P^{5}/T^4$ and $G_{5,2}/T^5$ of the complexity $2$ actions on an  $(10, 3)$ and $(12, 4)$-manifolds, respectively  are described  in detail.

The axioms and methods  of this paper  rely on the well known  results on the algebraic torus action on Grassmann manifolds~\cite{GM,GS,GGMS,Kap}, as well as on  our results on   the  orbit spaces of  the compact torus action on Grassmann manifolds~\cite{BT-1, BTN}. We emphasize the Grassmann manifolds $G_{k+1,2}$ with an effective action of the torus $T^{k}$ of complexity $k-2$, $k\geq 2$, since   many papers have been devoted  to them in the recent time, due to their connection with the moduli spaces of curves (see, for example,~\cite{Kap, Kap-2, Keel}).

%%%%%%%%%%%%%%%   2

\section{The key examples of  manifolds with torus actions}
In this section we provide the key examples  and  the basic facts,  which   served as a  starting point for establishing  axioms  for the theory of  $(2n,k)$-manifolds.

%%%%%%%%%%%%%%   2.1

\subsection{Quasitoric manifolds}\label{qt}
A quasitoric manifold is a topological analog of a non-singular projective toric variety from algebraic geometry.  We follow
~\cite{BP} and recall that a quasitoric manifold is a  smooth, closed manifold $M^{2n}$ equipped with a  smooth action of the torus $T^n$ such that:
\begin{itemize}
\item  the  action of the torus $T^n$ on $M^{2n}$ is locally standard;
\item the orbit space $M^{2n}/T^n$ is diffeomorphic to  a  simple polytope $P^n$ as a manifold with corners.
\end{itemize}
The second condition gives that there exists   a  smooth map $\mu : M^{2n}\to P^{n}$ that is constant on $T^n$-orbits and maps an $p$-dimensional orbit to an  interior point of some $p$-dimensional face of $P^{n}$.    In follows that    $\mu ^{-1}(\stackrel{\circ}{P^{n}}), $ is a  dense  set in $M^{2n}$, the   action of the torus $T^{n}$ is free on  $\mu ^{-1}(\stackrel{\circ}{P^{n}})$  and the vertices of  the  polytope $P^{n}$ correspond to the fixed points of $T^n$-action.  We recall the  notion of the characteristic map and the characteristic  matrix for  a  quasitoric manifold $M^{2n}$ which, together with the combinatorics of the  polytope $P^{n}$, determine  an equivariant topology and cohomology of a  manifold $M^{2n}$.

Let $\{F_1, \ldots , F_{m}\}$ be the  set of all facets of $P^{n}$. The stationary subgroups $T(F_i)$ for the faces $F_i$ are  one-dimensional connected subgroups  in $T^n$ and they  can be written as   $T(F_i) = (e^{2\pi\sqrt{-1}\lambda_{1i}\varphi},\ldots ,e^{2pi \sqrt{-1}\lambda _{ni}\varphi})$, where $\varphi \in \R$ and $\lambda _{i} = (\lambda _{1i}, \ldots , \lambda _{ni})\in \Z ^{n}$.  Denote by  $S(T^n)$  the  set of all connected subgroups of the torus $T^n$.  The characteristic map $l: \{ F_{i}\} \to S(T^n)$,  which is defined by  $l: F_i\to T(F_i)$,  can be described using  the  characteristic matrix  $\Lambda$ whose  columns are the integer vectors $\lambda _{i}$, $1\leq i\leq m$ which satisfy the following condition: if the intersection $F= F_{i_1}\cap \ldots \cap F_{i_n}$  is a vertex of the polytope $P^n$, then the vectors $\lambda _{i_1}, \ldots , \lambda _{i_n}$ form a  basis for $\Z ^{n}$.   Due to Davis-Januszkiewicz theorem (see~\cite{DJ-91}), the matrix $\Lambda$ and  the combinatorics of the  polytope $P^{n}$ determine together  the cohomology of  $M^{2n}$.

Let $\bf{F}$ be the partially ordered  set of all faces for $P^{n}$.   The  points from $\mu ^{-1}(F)\subset M^{2n}$ have the same stabilizer  for any $F\in \bf{F}$, so the characteristic map  extends to the map   ${\bf F}\to S(T^n)$, which to each face  $F$ assigns  the stationary subgroup of the set  $\mu ^{-1}(F)$. More precisely, the face  $F= F_{i_1}\cap \ldots \cap F_{i_k}$  maps to the image of the subgroup  $T(F_{i_1})\times \cdots \times T(F_{i_k})\subset  T^n$.  The  map    ${\bf F}\to S(T^n)$ is completely  determined by the matrix $\Lambda$ and it is denoted  by $\Lambda$ as well.  A quasitoric manifold  $M^{2n}$ can be recovered, up to diffeomorphism, using   the  characteristic pair $(P^n, \Lambda )$. In other words, it can be constructed  a  model for $M^{2n}$  by:
\begin{equation}\label{recov}
M \cong (T^n \times P^{n})/\approx,\;\; (t_1,p_1)\approx (t_2, p_2)\; \text{if and only if }\; p_1=p_2,\; t_1t_2^{-1}\in \Lambda (F(p_1)),
\end{equation}
where $F(p_1)$ is the smallest face of the polytope  $P^{n}$ that  contains $p_1$.

In this case,   for any point $p\in P^{n}$,  the cort\'ege $\sigma (p) = \{ P_{\sigma} : p\in \stackrel{\circ}{P}_{\sigma}\}$ consists
of one polytope, that is the face $F(p)$.

%%%%%%%%%%%%%%   2.2

 \subsection{Complex Grassmann manifolds $G_{k+1,q}$}\label{gr}

The complex Grassmann manifold $G_{k+1,q}$ consists of all  $q$-dimensional complex subspaces in the complex vector space $\C^ {k+1}$. The canonical action of the torus  $T^{k+1}$ on  $\C ^{k+1}$,  considered  in the canonical basis,  induces the action of the torus  $T^{k+1}$ on the manifold $G_{k+1,q}$.
This action is not effective,  as the diagonal subgroup  $\Delta  =\{(t,\ldots ,t), t\in S^1\}$ acts trivially on $G_{k+1,q}$. The torus $T^k = T^{k+1}/H$ acts effectively on $G_{k+1,q}$.

We recall some classical constructions on the complex Grassmann manifolds. After fixing  a basis in an   $q$-dimensional subspace $L\subset \C ^{k+1}$, this subspace  can be    represented by the $q\times (k+1)$ matrix $A(L)$ such that   $\text{rank} A(L)=q$. For any subset $J\subset \{1,\ldots ,k+1\}$ consisting of $q$ elements, $|J|=q$,  denote by  $A_{J}(L)$  the  matrix of dimension  $q\times q$ given by the columns of  the matrix $A_{L}$ that  are indexed by $J$. We will assume that  the set of all subsets $J = \{ j_1<j_2\ldots <j_q\}\subset \{1,\ldots ,k+1\}$ is  ordered lexicographically. Using this ordering  define the vector
\[
P(A(L))= (P^{J}(A(L))) = (\det A_{J}(L)),
\]

whose coordinates are called the Pl\"ucker coordinates of a point  $L\in G_{k+1,q}$.

The Pl\"ucker coordinates depend on a fixed basis for $L$ and they are, up to constant, uniquely defined. More precisely,    two bases $f_1,\ldots, f_q$ and $e_1,\ldots ,e_q$  for a subspace $L$ are related by $f_j=\sum _{i=1}^{q}\alpha _j^ie_i$, $1\leq j\leq q$. It implies that   the Pl\"ucker coordinates for $L$ in these two bases are related by
\begin{equation}\label{plucker}
P^{J}_{f}(L) = (\sum_{\sigma \in S_k}\sg (\sigma) \alpha _{\sigma (1)}^1\cdots \alpha _{\sigma (k)}^k)P^{J}_{e}(L) = \text{det}(\alpha),
\end{equation}
where $\alpha = (\alpha _{j}^{i})$ is a  transition  matrix  between these two bases.
In this way the Pl\"ucker coordinates produce an embedding of the Grassmann manifold $G_{k+1,q}$ into $\C P^{N-1}$, where $N={k+1\choose q}$.

The Pl\"ucker coordinates define the smooth atlas $\{(M_{I}, u_{I})\}$ on $G_{k+1, q}$, where $I$ runs through all $q$-element subsets of the set $\{1,\ldots , k+1\}$, as follows. Here  $M_{I} = \{ L\in G_{k+1, q} : P^{I}(L)\neq 0\}$ and the coordinate map $u_{I} : M_{I}\to \C ^{q(k+1-q)}$ is defined by the Pl\"ucker coordinates $P^{J}(L)$, $J=(I\setminus \{i_p\})\cup \{j_s\}$, $i_p\in I, j_s\in \{1, \ldots, k+1\}\setminus I$, in  such a basis of a subspace  $L$ that the $(q\times q)$-dimensional sub-matrix of the matrix $A(L)$ whose columns are indexed by $I$ is an identity matrix.
%  given by  the elements $a_{ij}$, $i\notin I$ of  $(k+1)\times q$  matrix $A_{L}$ which represents $L$ and whose $q\times q$ %submatrix whose rows are determined by $I$ is an identity matrix.

 Let us consider the action of $T^{k+1}$ on  $\C P^{N-1}$,  which is given by the composition of  the $q$-th exterior power representation $T^{k+1}\to T^{N}$ and the standard action of  $T^{N}$ on $\C P^{N-1}$. The  standard moment map  $\C P^{N-1}\to \R^{k+1}$  for  such an  action of the torus   $T^{k+1}$  induces  the moment map $\mu : G_{k+1,q} \to \R ^{k+1}$ ( see~\cite{Kir}), which is defined by
\begin{equation}\label{gr-moment}
\mu (L) = \frac{\sum _{J}|P^{J}(L)|^2\delta _{J}}{\sum _{J}|P^{J}(L)|^2},
\end{equation}
where $\delta _{J}\in \R ^{k+1}$ are the vectors whose coordinates are given by
\[
(\delta _{J})_{i}=1,\;\; i\in J,\;\; (\delta _{J})_{i}=0,\;\;  i\notin J,
\]
and $J$ runs through the $q$-element  subsets of $\{1,\ldots, k+1\}$.

The map $\mu$ is $T^{k+1}$-invariant and  the  image of $\mu$ is, by its definition,  a convex hull over the points $\delta _{J}$.    The convex polytope obtained in this way is known as the hypersimplex  $\Delta _{k+1,q}$, see~\cite{Z}. In particular, $\Delta _{4,2}$ is the  octahedron.

Recall that the Grassmann manifold $G_{k+1, q}$ admits as well an action of the algebraic torus $(\C ^{*})^{k+1}$, which is induced by the coordinate wise action of $(\C ^{*})^{k+1}$ on $\C ^{k+1}$.  The orbit $(\C ^{*})^{k+1}\cdot L$ is a smooth submanifold in $G_{k+1, q}$  for any point  $L\in G_{k+1, q}$. The stationary subgroup $(\C ^{*})^{k+1}_{L}$ of a point  $L$  is a toral subgroup in  $(\C ^{*})^{k+1}$ and the algebraic torus $(\C ^{*})^{L}=(\C ^{*})^{k+1}/(\C ^{*})^{k+1}_{L}$ acts freely on  the orbit $(\C ^{*})^{k+1}\cdot L$. Moreover, $\mu ((\C ^{*})^{k+1}\cdot L) = \stackrel{\circ}{P_L}$, where $P_{L}$ is a convex polytope spanned by the  vertices $\delta _{J}$ of the hypersimplex  $\Delta _{n,k}$ indexed by those $J$  such that $P^{J}(L)\neq 0$.

%%%%%%%%%%%%%%%%%   3

\section{Definition of  $(2n,k)$-manifolds}\label{jedan}

We assume the following to be given:
\begin{itemize}
\item  a smooth, closed, oriented manifold $M^{2n}$;
\item  a smooth, effective action $\theta$ of the torus $T^{k}$ on $M^{2n}$, where $1\leq k \leq n$, such that the stabilizer of any point is a connected subgroup of $T^k$;
\item   a smooth,  $\theta$-equivariant  map $\mu : M^{2n}\to \R^k$,  whose image is a $k$-dimensional convex polytope $P^k$, where $\R ^k$  is considered with the trivial $T^k$ - action. We assume $\mu : M^{2n}\to \R^{k}$ to be an open map. 
\end{itemize}
The map $\mu $ we call an {\it almost moment map} for the given $T^k$-action on $M^{2n}$.

We say that the  triple  $(M^{2n}, \theta ,\mu )$ is an $(2n,k)$-manifold if it satisfies the six axioms which we formulate below.

%%%%%%%%%%%%%%%   3.1

\subsection{A smooth manifold structure.}

\begin{ax}\label{atlas}
There exists  a smooth atlas $\mathfrak{M} = \{(M_{i}, \varphi _i)\}_{i\in I}$,  where  $M_{i}$ are an open subsets in $M^{2n}$ and $\varphi _i : M_{i}\to \R ^{2n}$   are coordinate homeomorphisms.  Any  chart  $M_{i}$ is $T^k$-invariant, contains  exactly one fixed point $x_{i}$ with $\varphi _{i}(x_i) = (0,\ldots ,0)$, such that   $x_i\neq x_j$ for $i\neq j$, The   closure of any chart $M_{i}$   is  the whole  manifold $M^{2n}$.
\end{ax}

 Any atlas that  satisfies  Axiom~\ref{atlas} has finitely many charts, 
since  $M^{2n}$ is  a compact manifold.  It implies:

\begin{cor}
The action of $T^{k}$ on $M^{2n}$ has finitely many isolated fixed points.
\end{cor}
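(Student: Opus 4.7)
The plan is to combine the compactness of $M^{2n}$ with the fact that each chart in the atlas $\mathfrak{M}$ contains exactly one fixed point.

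First I would observe that the charts $M_i$ are open subsets of $M^{2n}$, since each $\varphi_i : M_i \to \R^{2n}$ is a homeomorphism onto an open set (in fact, onto all of $\R^{2n} \approx \C^n$). Because $\mathfrak{M}$ is an atlas, the collection $\{M_i\}_{i\in I}$ is an open cover of $M^{2n}$. Next, since $M^{2n}$ is compact, I extract a finite subcover $\{M_{i_1},\dots,M_{i_m}\}$.

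The key step is then to show that every $T^k$-fixed point of $M^{2n}$ must be one of the $x_{i_j}$. Let $x\in M^{2n}$ be any fixed point of the action $\theta$. Since the finite subcover is already a cover, $x\in M_{i_j}$ for some $j$. By Axiom~\ref{atlas}, the chart $M_{i_j}$ contains exactly one fixed point, namely $x_{i_j}$, so $x=x_{i_j}$. Therefore the fixed point set of $\theta$ is contained in the finite set $\{x_{i_1},\dots,x_{i_m}\}$, which proves finiteness.

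Finally, to justify the word \emph{isolated}, I would note that each $x_{i_j}$ has the open neighborhood $M_{i_j}$ in which $x_{i_j}$ is the unique fixed point, so fixed points are automatically isolated. I do not anticipate any real obstacle here; the statement is essentially a direct consequence of compactness together with the uniqueness-of-fixed-point clause in Axiom~\ref{atlas}. The density condition $\overline{M_i}=M^{2n}$ is not needed for this corollary and plays its role elsewhere in the theory.
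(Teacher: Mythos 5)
Your proof is correct and follows essentially the same route as the paper: compactness of $M^{2n}$ yields finitely many charts covering it, and Axiom~\ref{atlas} gives exactly one fixed point per chart, forcing the fixed-point set to be finite and isolated. The only minor difference is that you are more careful to phrase this as extracting a finite \emph{subcover}, whereas the paper states directly that there are finitely many charts; this is a cosmetic improvement, not a different argument.
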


By $m$ denote the  number of fixed points for $T^k$-action on $M^{2n}$. We enumerate as $(M_1,\varphi _1),\ldots ,(M_{m},\varphi _{m})$ the charts given by Axiom~\ref{atlas}   The sets $Y_{i} = M^{2n}\setminus  M_{i}$ are closed in $M^{2n}$ and $T^k$-invariant, by their definition.

 There is a standard concept of the  boundary for  a subset $Y$ of a  topological space $X$ in general topology. There is also a concept of the   boundary  of a manifold, manifold with corners and, in that context the boundary of a convex polytope, used in algebraic topology and differential geometry.  In all these cases $\partial$ is a standard notation for the  boundary. It is not difficult to realize that all these concepts are  {\it not always}  appropriate  for some purposes  in the theory of $(2n,k)$-manifolds.  Therefore,   we introduce  the new notion of the  boundary and the  corresponding symbol. The boundary $\bar{\partial}$ of a  subset $Y$ in a  topological space $X$  is the set  $\bar{\partial}Y  = \bar{Y}\setminus Y$, where $\bar{Y}$ is the closure of a  set $Y$ in a  space $X$. 
Note that if $Y$ is an open set   in $X$  , then $\bar{\partial}Y = \partial Y$, where $\partial Y = \bar{Y}\cap\overline{X\setminus Y}$ is the standard boundary as defined in general topology.  In the sequel,  set $\bar{\partial}Y\subset X$  is  called    the  $\bar{\partial}-$boundary  of a  set $Y$ as well.

% For a subspace $V\subset M^{2n}$ we set $\partial V = \overline{V}\setminus \stackrel{\circ}{V}$, where $\stackrel{\circ}{V}$ %denotes the interior of $V$.

Therefore,  the fact that  the above defined  sets $M_{i}$ are   dense, open sets  in $M^{2n}$,    implies that $Y_{i} = \partial M_{i}= \bar{\partial} M_{i}$.

For any $\sigma = \{i_1,\ldots ,i_l\}\subseteq [1,m]$  let us consider the set   :
\[
W_{\sigma} = M_{i_1}\cap \cdots \cap M_{i_l}\cap Y_{i_{l+1}}\cap \cdots \cap Y_{i_{m}},
\]
where $\{i_{l+1},\ldots ,i_{m}\} = [1,m] \setminus \{i_1,\ldots ,i_l\}$.

\begin{defn}
The non-empty set  $W_{\sigma}$  is said to be a stratum. The  index set $\sigma \subset [1,m]$  of a  stratum  $W_{\sigma7}$ is   said to be an  admissible set.
\end{defn}

\begin{lem}
The strata  $W_{\sigma}$ are $T^k$-invariant, pairwise disjoint and their union is the whole manifold $M^{2n}$.
\end{lem}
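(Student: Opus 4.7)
The plan is to verify each of the three claims directly from the definition of $W_{\sigma}$ and the basic properties of the atlas $\mathfrak{M}$.

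First, for $T^{k}$-invariance, I would observe that every chart $M_{i}$ is $T^{k}$-invariant by Axiom~\ref{atlas}, and therefore so is its complement $Y_{i} = M^{2n}\setminus M_{i}$. Since $W_{\sigma}$ is by construction a finite intersection of sets of the form $M_{i}$ and $Y_{j}$, and an intersection of $T^{k}$-invariant sets is $T^{k}$-invariant, the claim follows.

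Second, for pairwise disjointness, suppose $\sigma,\tau \subseteq [1,m]$ are distinct admissible index sets. Pick any $i$ in the symmetric difference, say $i\in \sigma\setminus \tau$. Then by the definition of admissible space, $W_{\sigma}\subseteq M_{i}$, while $W_{\tau}\subseteq Y_{i}=M^{2n}\setminus M_{i}$. These two sets are disjoint, hence so are $W_{\sigma}$ and $W_{\tau}$.

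Third, for the covering property, I would use that $\mathfrak{M}$ is an atlas of $M^{2n}$, so the charts $M_{1},\ldots,M_{m}$ cover $M^{2n}$. Given any $x\in M^{2n}$, set $\sigma(x) = \{i\in[1,m]\;:\;x\in M_{i}\}$. This index set is non-empty by the covering property, and by construction $x\in M_{i}$ for each $i\in\sigma(x)$ and $x\in Y_{j}$ for each $j\notin\sigma(x)$, so $x\in W_{\sigma(x)}$. In particular $W_{\sigma(x)}$ is non-empty, hence admissible. Thus every point of $M^{2n}$ lies in some admissible $W_{\sigma}$, which, combined with disjointness, shows that the admissible spaces partition $M^{2n}$.

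There is no genuine obstacle here; the only subtlety worth flagging is the convention that the symbol $W_{\sigma}$ is declared admissible only when non-empty, so the decomposition of $M^{2n}$ is a partition indexed by exactly those $\sigma$ arising as $\sigma(x)$ for some $x\in M^{2n}$, and not by all subsets of $[1,m]$.
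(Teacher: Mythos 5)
Your proof is correct and follows essentially the same route as the paper: $T^{k}$-invariance from invariance of the $M_{i}$ and hence of the $Y_{i}$, disjointness by picking an index in the symmetric difference, and the covering from the fact that the charts cover $M^{2n}$. Your write-up of the covering step is a bit more explicit than the paper's (which just notes that the charts cover $M^{2n}$), but there is no substantive difference in method.
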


\begin{proof}
Any stratum  $W_{\sigma}$ is $T^k$-invariant since the sets  $M_{i}$ and $Y_{i}$ are $T^k$- invariant.  Further,
if  $W_{\sigma _1}\neq W_{\sigma _2}$ then   $\sigma _1\neq \sigma _2$ and, thus, we can assume that  there exists $i\in \sigma _1$ such that $i\notin \sigma _2$, $1\leq i\leq m$. Therefore, for $x\in W_{\sigma _1}$ it follows  that $x\in M_{i}$ and, thus,  $x\notin Y_{i}$,  which  gives that $x\notin W_{\sigma _2}$. Similarly,   if $x\in W_{\sigma _2}$ then $x\in Y_{i}$ and thus  $x\notin M_{i}$,  which implies $x\notin W_{\sigma _1}$.    The union of all strata  is  the manifold $M^{2n}$,  since the  charts cover  $M^{2n}$.  Note that  it is uniquely defined an admissible set $\sigma =\sigma (x) = \{i\in [1,m] | x\in M_{i}\}$  for any point $x\in M^{2n}$.
\end{proof}

This Lemma together with the fact that the $T^k$-action on $M^{2n}$ is continuous implies that the  closure $\overline{W_{\sigma}}$ is a  $T^k$-invariant set for any stratum $W_{\sigma}$.

\begin{ex}\label{main}
The set $W_{[1,m]} = M_1\cap \cdots \cap M_{m}$ is non-empty and, thus, it is  a stratum  and the set $\sigma = [1,m]$ is an admissible set. The set  $W_{[1,m]}$  is an  open dense set in $M^{2n}$ since, by Axiom~\ref{atlas} any  chart $M_i$, $1\leq i\leq m$ is an  open,  dense set in $M^{2n}$. The stratum $W_{[1,m]}$ is called the  main stratum  and it is  further denoted by  $W= W_{[1,m]}$.
\end{ex}
\begin{ex}\label{one}
The set  $W_{\{i\}} = M_{i}\cap \left ( \bigcap _{j\neq i} Y_{j} \right )$ is a stratum  and the set $\sigma = \{i\}$ is an admissible set  for any $1\leq i\leq m$. It follows from the observation that the set  $W_{i}$ is non-empty since, by Axiom~\ref{atlas}  the fixed point $x_i$ which belongs to
$ M_{i}$ also belongs  to all $Y_j$, $j\neq i$, $1\leq j\leq m$.
\end{ex}

\begin{rem}
Since $M^{2n}=M_1\cup \cdots \cup M_{m}$, we see  that $W_{\emptyset}=Y_1\cap \cdots \cap Y_{m}=\emptyset$, which implies that $W_{\emptyset}$ is not a stratum and $\emptyset$ is not an admissible set.
\end{rem}
\begin{rem}\label{open}
A stratum $W_{\sigma}$ different from the main stratum $W$ is not  an open set in $M^{2n}$. It follows from the observations that in this case there exists a chart $M_{i}$  such that $M_{i}\cap W_{\sigma} =\emptyset$ and that $M_{i}$ is a dense set in $M^{2n}$.
\end{rem}
 %\begin{proof}
%Let $\sigma = \{i_1,\ldots ,i_l\}\subset [1,m]$, $1\leq  l\leq m$  and assume that $W_{\sigma}\neq W$ is an open set in $M^{2n}%$. There exists $i\in [1,m]$ such that $i\notin \sigma$, what implies that $W_{\sigma}\subset Y_i$.   Since $M_i$ is everywhere %dense in $M^{2n}$, it follows that  $W_{\sigma}$ can not  contain an open set.
%\end{proof}

%As for the boundary for $W_{\sigma}$ we have the following:
\begin{lem}\label{bound}
The boundary $\bar{\partial} W_{\sigma}$ of a  stratum  $W_{\sigma}$ is contained in the union of the strata  $W_{\tilde{\sigma}}$, where $\tilde{\sigma}$ are the admissible sets such that $\tilde{\sigma}\subset \sigma$.
\end{lem}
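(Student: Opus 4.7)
The plan is to take an arbitrary point $x \in \partial W_{\sigma} = \overline{W_{\sigma}} \setminus W_{\sigma}$, locate it in one of the admissible spaces $W_{\tilde{\sigma}}$ using the fact that the family $\{W_{\tilde{\sigma}}\}$ partitions $M^{2n}$ (by the lemma just above), and then show that the index set $\tilde{\sigma}$ must be a proper subset of $\sigma$.

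The key observation is an asymmetry between the two types of factors in the defining intersection
\[
W_{\sigma} = \bigcap_{i \in \sigma} M_{i} \cap \bigcap_{j \in [1,m]\setminus\sigma} Y_{j}.
\]
The sets $Y_{j}$ are closed in $M^{2n}$, because by Axiom~\ref{atlas} each chart $M_{j}$ is open. Therefore, for every $j \notin \sigma$ the inclusion $W_{\sigma} \subset Y_{j}$ passes to the closure, giving $\overline{W_{\sigma}} \subset Y_{j}$. The sets $M_{i}$, on the other hand, are only open, so the analogous inclusions $W_{\sigma} \subset M_{i}$ need not be preserved under closure; this is precisely where boundary points can escape from $W_{\sigma}$.

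Now suppose $x \in \partial W_{\sigma}$, and let $\tilde{\sigma}$ be the unique admissible set with $x \in W_{\tilde{\sigma}}$. For every $j \in [1,m] \setminus \sigma$ we have $x \in \overline{W_{\sigma}} \subset Y_{j}$, so $x \notin M_{j}$, which forces $j \notin \tilde{\sigma}$. Hence $\tilde{\sigma} \subseteq \sigma$. Finally, since $x \notin W_{\sigma}$, we have $\tilde{\sigma} \neq \sigma$, so $\tilde{\sigma} \subsetneq \sigma$. This shows $\partial W_{\sigma} \subset \bigcup_{\tilde{\sigma} \subsetneq \sigma} W_{\tilde{\sigma}}$, as required.

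I do not expect any serious obstacle here; the argument is purely set-theoretic, resting only on the openness of the charts $M_{i}$ (which makes each $Y_{j}$ closed) and on the partition property established in the previous lemma. The only subtlety worth emphasizing in the write-up is the asymmetry between $M_{i}$ and $Y_{j}$ under closure, which is what produces the inclusion $\tilde{\sigma} \subset \sigma$ rather than an arbitrary change of index set.
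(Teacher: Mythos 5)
Your proof is correct, and it reaches the conclusion by a somewhat different route than the paper. The paper expands $\overline{W_{\sigma}}$ explicitly: writing $\overline{M_{i_j}} = M_{i_j} \cup \partial M_{i_j}$ and $\partial M_{i_j} = Y_{i_j}$ (using density of the charts), distributing the intersection, and then reading off that each resulting piece is of the form $W_{\tilde{\sigma}}$ with $\tilde{\sigma} \subset \sigma$. You instead argue pointwise: you locate a boundary point $x$ in the unique stratum $W_{\tilde{\sigma}}$ containing it (invoking the partition lemma) and then rule out any index $j \notin \sigma$ from $\tilde{\sigma}$ purely because $Y_j$ is closed, finishing with $\tilde{\sigma} \neq \sigma$ since $x \notin W_{\sigma}$. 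Your route is cleaner in two respects: it avoids the combinatorial expansion of the paper (whose intermediate formula intersects $\partial M_{i_{j_1}}$ with the $M_{i_s}$ rather than their closures, a small imprecision the substitution $\partial M_i = Y_i$ papers over), and it makes visible the real mechanism, namely the asymmetry between the open $M_i$'s and the closed $Y_j$'s. You also explicitly get the strict inclusion $\tilde{\sigma} \subsetneq \sigma$, which the paper's statement leaves ambiguous but which is the sharp form. One further small observation: your proof never actually uses that the charts are dense (only that $Y_j$ is closed), whereas the paper's identification $\partial M_i = Y_i$ implicitly does; for the inclusion being proved this is harmless, but it shows your argument assumes slightly less.
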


\begin{proof}
Let $W_{\sigma} = M_{i_1}\cap \cdots \cap M_{i_l}\cap Y_{i_{l+1}}\cap \ldots \cap Y_{i_m}$. Then the boundary $\partial W_{\sigma}$ is contained in the union of the sets $
 \left (\bigcap \limits_{q=1}^{p}\bar{\partial} M_{i_{j_q}}\right )\cap \left (\bigcap \limits_{s\neq j_1,\ldots , j_p}M_{i_s}\right )\cap \left (\bigcap \limits _{j=l+1}^{m}Y_{i_{j}}\right )=
 \left (\bigcap \limits _{q=1}^{p} Y_{i_{j_q}}\right )\cap \left (\bigcap \limits_{s\neq j_1,\ldots , j_p}M_{i_s}\right )\cap \left (\bigcap \limits_{ j=l+1}^{m}Y_{i_{j}}\right )$, where $j_1<\ldots <j_{p}$ and $1\leq p\leq l$.
Such nonempty sets give $W_{\tilde{\sigma}}$, where $\tilde{\sigma} = \sigma \setminus \{i_{j_1}, \ldots i_{j_p}\}$.
%\[
%\partial W_{\sigma} \subseteq  \cup _{j_1=1}^{l}(\partial M_{i_{j_1}}\cap (\cap _{s\neq j_1} M_{i_s})\cap Y_{i_{l+1}}\cap \ldots %\cap Y_{i_m})\cup
%\]
%\[
%\cup _{1\leq j_1<j_2\leq l}(\partial M_{i_{j_1}}\cap \partial M_{i_{j_2}}\cap (\cap _{s\neq j_1,j_2}M_{i_s})\cap Y_{i_{l+1}}\cap %\cdots \cap Y_{i_m}) \cup \cdots
%\]
%\[
%\cup (\partial M_{i_1}\cap \cdots \cap \partial M_{i_l}\cap Y_{i_{l+1}}\cap \cdots \cap Y_{i_m})
%\]
%\[
%= \cup _{j_1=1}^{l}(Y_{i_{j_1}}\cap(\cap_{s\neq j_1} M_{i_s})\cap Y_{i_{l+1}}\cap \ldots \cap Y_{i_m})\cup
%\]
%\[\cup _{1\leq j_1<j_2\leq l}(Y_{i_{j_1}}\cap Y_{i_{j_2}}\cap (\cap _{s\neq j_1,j_2}M_{i_s})\cap Y_{i_{l+1}}\cap \cdots \cap %Y_{i_m}) \cup \cdots
%\]
%\[ \cup (Y_{i_1}\cap \cdots \cap Y_{i_l}\cap Y_{i_{l+1}}\cap \cdots \cap Y_{i_m}).
%\]
Hence, $\bar{\partial} W_{\sigma} \subseteq  \cup W_{\tilde{\sigma}}$, where ${\tilde{\sigma}}$ goes through all  proper admissible subsets  of $\sigma$.
\end{proof}

%%%%%%%%%%%%%%%   4

\section{Almost moment map,  strata and admissible  polytopes}

\begin{ax}\label{bij}
The map $\mu$ is a  bijection between  the set of fixed points   and the set of vertices of the polytope $P^k$.
\end{ax}

Since  an  $k$-dimensional polytope has at least $k+1$ vertices,  it  follows:

\begin{cor}
The number of fixed points for $T^k$-action on $M^{2n}$ is not less then $k+1$.
\end{cor}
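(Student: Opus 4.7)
The plan is essentially a one-line deduction from Axiom~\ref{bij} together with a standard fact from convex geometry. First I would invoke Axiom~\ref{bij}, which supplies a bijection between the set of fixed points of the $T^k$-action on $M^{2n}$ and the vertex set of the $k$-dimensional convex polytope $P^k$. Thus it suffices to show that any $k$-dimensional convex polytope has at least $k+1$ vertices.

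For the polytope inequality, I would argue as follows. A convex polytope is the convex hull of its vertex set $V$. The affine hull of $V$ has the same dimension as the polytope itself, namely $k$. Since $k+1$ is the minimal number of points needed to affinely span a $k$-dimensional affine subspace of $\mathbb{R}^k$, we must have $|V|\geq k+1$. Equality is attained precisely when $P^k$ is a $k$-simplex.

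Combining these two facts yields that the number $m$ of fixed points satisfies $m=|V|\geq k+1$, which is exactly the statement of the corollary. No real obstacle is expected here; the only subtlety is that one should note the dimension assumption on $P^k$ is built into the setup (the image of $\mu$ is a $k$-dimensional convex polytope, by the hypotheses stated before Axiom~\ref{atlas}), so the affine-hull argument genuinely applies.
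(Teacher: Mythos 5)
Your argument is correct and is precisely the one the paper uses: the corollary follows from Axiom~\ref{bij} combined with the observation that a $k$-dimensional convex polytope has at least $k+1$ vertices; you merely spell out the affine-hull justification for that last fact, which the paper leaves implicit.
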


Let $S(P^k)$ be a family of all convex polytopes that are spanned by the vertices of the polytope $P^k$. By $\mathfrak{S}$ denote the set of all admissible sets.
Using the almost moment map $\mu$,  we define the  map
$s : \mathfrak{S}\to  S(P^k)$ as follows. Put   $v_i=\mu (x_i)$, $1\leq i\leq m$ and  let  $\sigma  = \{i_1,\ldots ,i_l\}\subseteq [1,m]$.  By Axiom~\ref{atlas}, for any  $i_{j}\in \sigma$  there exists a unique fixed point    $x_{i_j}\in M_{i_j}$ .
%Denote by   $v_{i_1},\ldots ,v_{i_l}$  the vertices of the polytope $P^k$ determined by
%$v_{i_j} = \mu (x_{i_j})$.
We put
\[
s(\sigma) = P_{\sigma},\; \text{where}\; P_{\sigma} =\text{convhull}(v_{i_1},\ldots ,v_{i_l}).
\]

\begin{defn}
The   polytope $P_{\sigma} \in S(P^k)$ is said to be an admissible polytope if  it is in the  image of the map
$s : \mathfrak{S}\to  S(P^k)$.
\end{defn}

\begin{rem}
Since $W_{\emptyset} = \emptyset$,  it follows that  $\emptyset$ is not an admissible polytope.
\end{rem}

\begin{ex}\label{adm_P}
The polytope $P^{k}$ is an admissible polytope. The set $\sigma = \{1,\ldots ,m\}$ is an admissible set as it is shown in Example~\ref{main}. In addition, by Axiom~\ref{bij}, $s(\sigma)$ is  a convex hull of all vertices of $P^k$, which  implies that  $s(\sigma) = P^{k}$.
\end{ex}

\begin{ex}\label{adm_v}
Any vertex $v_i$ of $P^{k}$ is an admissible polytope.   To see that, by Axiom~\ref{bij}, take  the fixed point $x_i$ such that $\mu (x_i) = v_i$. Then the set $\sigma = \{i\}$ is an admissible set as it is shown in Example~\ref{one} and $s(\sigma ) = v_i$.
\end{ex}

\begin{defn}
 The set of all admissible polytopes is said to be pure if any admissible polytope of the dimension $\leq k-1$ is a face of some admissible polytope of the dimension $k$.
\end{defn}

\begin{ex}
The set of admissible polytopes of a quasitoric manifold is a pure set, Moreover it follows from~\cite{BT-1}  and~\cite{BTN} (Proposition 9, Propositions 11-15 and Corollary 18)  that the set of admissible polytopes for the Grassmann manifolds $G_{4,2}$ and $G_{5,2}$ are  pure sets as well.
\end{ex}

\begin{rem}
For a general $(2n,k)$-manifold,  two  admissible polytopes $P_{\sigma _1}$ and $P_{\sigma _2}$ may have nonempty intersection 
$ \stackrel{\circ}{P}_{\sigma _1}\cap \stackrel{\circ}{P}_{\sigma _2}$. For example, one can verify this   in the case  of complex Grassmann manifold $G_{4,2}$ which is an  $(8,3)$-manifold, see~\cite{BT-1}.
\end{rem}

\begin{defn}
The point $p\in P^k$ is said to be an exceptional point   if $p\in \stackrel{\circ}{P}_{\sigma _1}\cap \stackrel{\circ}{P}_{\sigma _2}$ for some different  admissible polytopes  $P_{\sigma _1}, P_{\sigma _2}$.  Otherwise, it is said to be simple.
\end{defn}

In this way, the set of exceptional  points $S\subseteq P^{k}$  is defined   .

%%%%%%%%%%%%%%%%%%%%%   5

\section{Stabilizers for the torus action on the strata}

By $S(T^k)$ denote, as above, the set  of all connected subgroups of the torus $T^k$. Note that a connected subgroup of the torus  $T^k$ is a torus.
Let us consider a function $\chi : M^{2n}\to S(T^k)$ which to any point $x$ assigns its stabilizer  $\chi (x)$  regarded to  the given $T^k$-action
on $M^{2n}$. It follows from the  set-up assumptions that    $\chi (x)$ is  a connected subgroup of the torus  $T^k$. We assume the following to be satisfied:

\begin{ax}\label{stab}
 The characteristic function $\chi$ is constant on any stratum  $W_{\sigma}$.
\end{ax}

Using Axiom~\ref{stab}, the torus $T^{\sigma} = T^{k}/\chi (W_{\sigma})$ can be defined  for any stratum  $W_{\sigma}$.

\begin{cor}
The torus $T^{\sigma}$ acts freely on $W_{\sigma}$, which  gives the  principal bundle
\begin{equation}
T^{\sigma}\to W_{\sigma}\to W_{\sigma}/T^{\sigma}.
\end{equation}
\end{cor}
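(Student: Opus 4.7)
The plan is to verify the three ingredients of a principal $T^\sigma$-bundle: a well-defined action, freeness, and local triviality.

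First I would check that the action of $T^\sigma=T^k/\chi(W_\sigma)$ on $W_\sigma$ is well defined. By Axiom~\ref{stab} the stabilizer $\chi(x)$ equals the fixed subgroup $\chi(W_\sigma)$ for every $x\in W_\sigma$; in particular $\chi(W_\sigma)$ acts trivially on $W_\sigma$, so the $T^k$-action on $W_\sigma$ descends to an action of the quotient torus $T^\sigma$. Since $W_\sigma$ is $T^k$-invariant, this quotient action is well defined as a continuous action of the compact torus $T^\sigma$.

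Next I would verify freeness. If $[t]\in T^\sigma$ satisfies $[t]\cdot x=x$ for some $x\in W_\sigma$, then a lift $t\in T^k$ fixes $x$, so $t\in\chi(x)=\chi(W_\sigma)$ by Axiom~\ref{stab}, and therefore $[t]=e$ in $T^\sigma$. Hence the action is free.

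Finally I would promote the free action to a principal bundle. The space $W_\sigma$ is a subspace of the manifold $M^{2n}$, hence Hausdorff and completely regular, and $T^\sigma$ is a compact Lie group acting freely and continuously. By the classical slice/Gleason theorem for free actions of compact Lie groups on completely regular spaces, the orbit map $W_\sigma\to W_\sigma/T^\sigma$ admits local sections and is a principal $T^\sigma$-bundle. This yields the asserted fibration
\[
T^\sigma\to W_\sigma\to W_\sigma/T^\sigma.
\]

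The step I expect to be the main subtlety is the last one, because the admissible space $W_\sigma$ need not be a smooth manifold (only the main stratum $W$ is open in $M^{2n}$, by Lemma~\ref{open}), so one cannot invoke the smooth slice theorem directly. The appeal to Gleason's theorem in the topological category bypasses this: complete regularity together with freeness and compactness of the acting torus is enough to conclude local triviality.
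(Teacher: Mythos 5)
Your proof is correct. The paper states this as an unproved corollary following Axiom~\ref{stab}, so there is no proof in the paper to compare against; your argument supplies the standard justification that the paper leaves implicit. The well-definedness and freeness steps are immediate from Axiom~\ref{stab} and are certainly what the authors have in mind. Your third step is where you add genuine value: you correctly observe that $W_{\sigma}$ is generally not open in $M^{2n}$ (only the main stratum is, by Lemma~\ref{open}), so it need not be a smooth submanifold, and therefore the smooth slice theorem cannot be invoked directly. Appealing to Gleason's theorem for free actions of compact Lie groups on completely regular (here, metrizable) spaces is exactly the right way to obtain local triviality of the orbit map in this setting, and it is worth making explicit since the paper glosses over it.
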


It is shown in~\cite{BTN} (Remark 3)  that,   in the case of  Grassmann manifolds,  the notion of the strata  as defined in~\cite{GS}, coincides with our notion of the strata. In addition,  one verifies~\cite{BTN} (Proposition 4) that the characteristic function is constant on any  stratum of the Grassmann manifolds.
% That is the reason to
%call the admissible spaces the strata as well.

%%%%%%%%%%%%%%%%%%%%   6

\section{Orbit spaces of the strata}
By its definition the  almost moment map $\mu : M^{2n}\to P^k$ is  $T^k$ - invariant . Therefore,  it   induces the map  $\widehat{\mu} : M^{2n}/T^k \to P^k$.  
%We require the following to be satisfied.

\begin{ax}~\label{fiber}
The almost moment map $\mu$:
\begin{itemize}
\item [a)] maps a   stratum  $W_{\sigma}$ onto $\stackrel{\circ}{P_{\sigma}}$,
\item [b)] induces  the   fiber bundle  $ \widehat{\mu}_{\sigma} : W_{\sigma}/T^{\sigma}\to \stackrel{\circ}{P_{\sigma}}$,
\item [c)] $\dim P_{\sigma} = \dim T^{\sigma}$.
\end{itemize}
\end{ax}

An immediate consequence of this Axiom is:
\begin{cor}
\begin{itemize} It holds
\item
$\widehat{\mu}(W/T^k) = \stackrel{\circ}{P^{k}}$  for the  main stratum  $W = M_1\cap\cdots \cap M_{m}$,
\item  $\mu (W_{\{i\}}) = \{v_i\}$, where  $v_{i}$  is a vertex and $W_{\{i\}} = M_{i}\cap \left (\bigcap _{j\neq i} \right ) Y_{j}$.

\end{itemize}
\end{cor}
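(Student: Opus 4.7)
The plan is to derive both bullets as direct instantiations of Axiom~\ref{fiber}, using Examples~\ref{adm_P} and~\ref{adm_v} to identify the admissible polytope attached to the admissible sets $\sigma = [1,m]$ and $\sigma = \{i\}$.

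First I would handle the main space. By Example~\ref{main}, the set $\sigma = [1,m]$ is admissible with $W_\sigma = W$, and by Example~\ref{adm_P} the associated admissible polytope is $P_\sigma = P^k$. Axiom~\ref{fiber} applied to this $\sigma$ says that $\widehat{\mu}$ restricts to a locally trivial fiber bundle $W/T^k \to \stackrel{\circ}{P^k}$; since a locally trivial fiber bundle is surjective onto its base, we obtain $\widehat{\mu}(W/T^k) = \stackrel{\circ}{P^{k}}$. (One should also note that $T^\sigma = T^k$ here, because $\dim T^\sigma = \dim P^k = k$ by the third bullet of Axiom~\ref{fiber}, and $T^\sigma$ is a quotient of $T^k$.)

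For the second bullet I would fix $i$ and apply Example~\ref{one}: $\sigma = \{i\}$ is an admissible set with $W_\sigma = W_{\{i\}}$. By Example~\ref{adm_v}, the corresponding admissible polytope is $P_\sigma = \{v_i\}$, a single point which is its own (relative) interior $\stackrel{\circ}{P_{\{i\}}} = \{v_i\}$. The first bullet of Axiom~\ref{fiber} then gives $\mu(W_{\{i\}}) \subseteq \{v_i\}$, and since $W_{\{i\}} \ni x_i$ with $\mu(x_i) = v_i$ by Axiom~\ref{bij}, equality $\mu(W_{\{i\}}) = \{v_i\}$ follows.

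There is no real obstacle here: the corollary is essentially a reading-off of Axiom~\ref{fiber} once one matches the two extremal admissible sets $[1,m]$ and $\{i\}$ with their admissible polytopes $P^k$ and $\{v_i\}$ already identified in the preceding examples. The only subtlety worth flagging in the write-up is making explicit that $\dim T^{[1,m]} = k$ so that $W/T^{[1,m]} = W/T^k$, and that for $\sigma = \{i\}$ the torus $T^\sigma$ is trivial (of dimension $0$), so $W_{\{i\}}/T^\sigma = W_{\{i\}}$ and the statement about $\mu$ rather than $\widehat{\mu}$ is consistent.
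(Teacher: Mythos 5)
Your proof is correct and matches the paper's intent: the paper states this corollary as an ``immediate consequence'' of Axiom~\ref{fiber} with no further argument, and you have simply supplied the natural unwinding, identifying the admissible polytopes $P^k$ and $\{v_i\}$ for $\sigma = [1,m]$ and $\sigma = \{i\}$ via Examples~\ref{main}, \ref{one}, \ref{adm_P}, \ref{adm_v}, and then reading off the two bullets of Axiom~\ref{fiber}. Your remarks about $T^{[1,m]} = T^k$ (via the dimension condition and connectedness of stabilizers) and about $T^{\{i\}}$ being trivial are exactly the right points to make explicit, and they reconcile the use of $\widehat{\mu}$ in the first bullet with the plain $\mu$ in the second.
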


\begin{rem}
Note that  Axiom~\ref{fiber} does not require a   fiber bundle   $ \widehat{\mu}_{\sigma} : W_{\sigma}/T^{\sigma}\to \stackrel{\circ}{P_{\sigma}}$ to be smooth, since there is no argument to claim that,  in general,  a stratum  $W_{\sigma}$  is   a smooth submanifold in $M^{2n}$.  The main stratum being open is of course a smooth submanifold,  but for the other strata it does not have to be the case. Even for the Grassmann manifolds, the   differential geometry of the strata can be very complicated, see~\cite{eric}.
\end{rem}

By $[F_{\sigma}]$ denote the  homeomorphic type of a fiber for  the fiber bundle $ \widehat{\mu}_{\sigma} : W_{\sigma}/T^{\sigma}\to \stackrel{\circ}{P_{\sigma}}$.
\begin{defn}\label{param}
The space $F_{\sigma}$ of a homeomorphic type $[F_{\sigma}]$ is called  the space of parameters  of a stratum $W_{\sigma}$.
\end{defn}

Since $\stackrel{\circ}{P_{\sigma}}$ is contractible, for the fiber bundle $ W_{\sigma}/T^{\sigma}\to  \stackrel{\circ}{P_{\sigma}}$ we conclude that  the following holds:   

\begin{cor}\label{homeom}
The  fiber bundle $\widehat{\mu}_{\sigma} : W_{\sigma}/T^{\sigma}\to  \stackrel{\circ}{P_{\sigma}}$  is isomorphic to the trivial bundle. That is $W_{\sigma}/T^{\sigma}$ is homeomorphic to $\stackrel{\circ}{P_{\sigma}} \times F_{\sigma}$ by the fiber wise homeomorphism
\begin{equation*}\begin{CD}
 W_{\sigma}/T^{\sigma} @>>>  \stackrel{\circ}{P_{\sigma}} \times F_{\sigma}\\
  @VV{\widehat{\mu}}V  @VV{}V\\
\stackrel{\circ}{P_{\sigma}} @= \stackrel{\circ}{P_{\sigma}}.
\end{CD}\end{equation*}
\end{cor}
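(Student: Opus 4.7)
The plan is to deduce the triviality of the bundle $\widehat{\mu}\colon W_{\sigma}/T^{\sigma}\to \stackrel{\circ}{P_{\sigma}}$ from two ingredients: the local triviality provided by Axiom~\ref{fiber}, and the fact that the base $\stackrel{\circ}{P_{\sigma}}$ is a very well-behaved contractible space (the relative interior of a convex polytope, hence homeomorphic to an open Euclidean ball of dimension $\dim P_{\sigma}$, in particular paracompact and contractible).

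The first step is to recall that $\stackrel{\circ}{P_{\sigma}}$, being convex and open in its affine hull, is contractible: the straight-line homotopy $H_t(x)=(1-t)x+tx_0$ to any chosen basepoint $x_0\in\stackrel{\circ}{P_{\sigma}}$ stays inside $\stackrel{\circ}{P_{\sigma}}$. Moreover, $\stackrel{\circ}{P_{\sigma}}$ is paracompact, so every locally trivial fiber bundle over it is numerable in the sense of Dold. Next I would invoke the homotopy invariance of numerable fiber bundles: if $f_0,f_1\colon B\to B'$ are homotopic and $\xi\to B'$ is a numerable bundle, then $f_0^{*}\xi\cong f_1^{*}\xi$.

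Applying this to $B=B'=\stackrel{\circ}{P_{\sigma}}$, with $f_0=\mathrm{id}$ and $f_1$ the constant map at $x_0$, I obtain
\[
\widehat{\mu}^{-1}(\stackrel{\circ}{P_{\sigma}}) \;\cong\; \mathrm{id}^{*}(W_{\sigma}/T^{\sigma}) \;\cong\; f_1^{*}(W_{\sigma}/T^{\sigma}) \;=\; \stackrel{\circ}{P_{\sigma}}\times \widehat{\mu}^{-1}(x_0),
\]
and by definition $\widehat{\mu}^{-1}(x_0)$ is a fiber of the homeomorphic type $[F_{\sigma}]$. The resulting homeomorphism is fiberwise over $\stackrel{\circ}{P_{\sigma}}$ by construction, which yields the commutative square in the statement.

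I expect no serious obstacle here; the only point that merits care is the status of the homotopy invariance theorem for bundles without an assumed structure group. Since the base is an open subset of Euclidean space, one can either appeal directly to Dold's theorem for numerable fiber bundles, or produce the trivialization by hand: choose a locally finite trivializing cover $\{U_\alpha\}$ of $\stackrel{\circ}{P_{\sigma}}$ with a subordinate partition of unity, pick a basepoint, and use the contracting homotopy to transport each local fiber identification to the fiber over the basepoint, gluing by the partition of unity. Either way, the corollary follows immediately from Axiom~\ref{fiber} together with the geometry of $\stackrel{\circ}{P_{\sigma}}$.
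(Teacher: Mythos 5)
Your argument is exactly the one the paper has in mind: the paper simply notes that $\stackrel{\circ}{P_{\sigma}}$ is contractible and the bundle is locally trivial, and concludes triviality. You have spelled out the standard justification (contractibility plus paracompactness, hence numerability, hence Dold's homotopy-invariance theorem applies), but there is no new idea or different route here.
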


\begin{defn}
For any $\sigma \in\mathfrak{S}$,  we fix  the space $F_{\sigma}$ and the trivialization  $h_{\sigma} : W_{\sigma}/T^{\sigma} \to \stackrel{\circ}{P}_{\sigma}\times F_{\sigma}$ as  structural data of $(2n,k)$-manifolds.
\end{defn}

Let $\overline{W_{\sigma}/T^{\sigma}}$ denote the closure of $W_{\sigma}/T^{\sigma}$. It is a compact subset in $M^{2n}/T^k$ since we assume $M^{2n}$ to be a compact manifold. We obtain:

\begin{cor}\label{closure}
$\widehat{\mu}(\overline{W_{\sigma}/T^{\sigma}}) = P_{\sigma}$.
\end{cor}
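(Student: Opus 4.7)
The plan is to establish the equality $\widehat{\mu}(\overline{W_{\sigma}/T^{\sigma}}) = P_{\sigma}$ by proving each inclusion separately, relying on three ingredients already available: the continuity of $\widehat{\mu}$, the identification $\widehat{\mu}(W_{\sigma}/T^{\sigma}) = \stackrel{\circ}{P_{\sigma}}$ from Axiom~\ref{fiber}, and the compactness of $M^{2n}/T^k$ inherited from the compactness of $M^{2n}$.

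For the inclusion $\widehat{\mu}(\overline{W_{\sigma}/T^{\sigma}}) \subseteq P_{\sigma}$, I would use the standard fact that a continuous map sends the closure of a set into the closure of its image. Concretely, given $y \in \overline{W_{\sigma}/T^{\sigma}}$, pick a net (or sequence, since we are in a metrizable setting) $y_\alpha \in W_{\sigma}/T^{\sigma}$ with $y_\alpha \to y$; then $\widehat{\mu}(y_\alpha) \to \widehat{\mu}(y)$ by continuity, and each $\widehat{\mu}(y_\alpha) \in \stackrel{\circ}{P_{\sigma}} \subseteq P_{\sigma}$. Since $P_{\sigma}$ is a closed subset of $\R^k$ (it is a convex polytope), the limit $\widehat{\mu}(y)$ lies in $P_{\sigma}$.

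For the reverse inclusion $P_{\sigma} \subseteq \widehat{\mu}(\overline{W_{\sigma}/T^{\sigma}})$, I would first observe that $\overline{W_{\sigma}/T^{\sigma}}$ is a closed subset of the compact Hausdorff space $M^{2n}/T^k$, hence compact. Therefore its image under the continuous map $\widehat{\mu}$ is compact and, in particular, closed in $P^k$. Since this image contains $\widehat{\mu}(W_{\sigma}/T^{\sigma}) = \stackrel{\circ}{P_{\sigma}}$, it must contain the closure $\overline{\stackrel{\circ}{P_{\sigma}}}$. The equality $\overline{\stackrel{\circ}{P_{\sigma}}} = P_{\sigma}$ is a standard fact for convex polytopes of positive dimension (and trivial in the degenerate case where $P_{\sigma}$ is a single vertex, where $W_{\sigma}$ is the closed stratum $W_{\{i\}}$ treated in Example~\ref{one}).

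I do not expect a serious obstacle here; the argument is essentially a packaging of the standard relation between continuity, closure and compactness. The only point worth a sentence of care is distinguishing the two notions of closure (in the orbit space versus in $\R^k$) and confirming $\overline{\stackrel{\circ}{P_{\sigma}}} = P_{\sigma}$, which relies on the convexity of $P_{\sigma}$. The case of a $0$-dimensional admissible polytope should be noted briefly so that the equality $\overline{\stackrel{\circ}{P_{\sigma}}} = P_{\sigma}$ is interpreted correctly when $P_{\sigma}$ reduces to a vertex.
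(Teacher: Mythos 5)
Your proof is correct and takes essentially the same approach as the paper: both use continuity of $\widehat{\mu}$ together with $\widehat{\mu}(W_{\sigma}/T^{\sigma}) = \stackrel{\circ}{P_{\sigma}}$ to bound the image inside $P_{\sigma}$, and then compactness of $\overline{W_{\sigma}/T^{\sigma}}$ (hence closedness of its image, which contains the dense subset $\stackrel{\circ}{P_{\sigma}}$) to force equality. You spell out the standard topological facts the paper leaves implicit, which is fine.
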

\begin{proof}
It holds  $\stackrel{\circ}{P_{\sigma}} \subset \widehat{\mu}(\overline{W_{\sigma}/T^{\sigma}}) \subseteq P_{\sigma}$ since $\widehat{\mu} : M^{2n}/T^k \to P^{k}$ is continuous and $\widehat{\mu}(W_{\sigma}/T^{\sigma}) = \stackrel{\circ}{P_{\sigma}}$. Furthermore,  since $\widehat{\mu}(\overline{W_{\sigma}/T^k})$ is a compact set, it follows  $\widehat{\mu}(\overline{W_{\sigma}/T^{\sigma}}) = P_{\sigma}$.
\end{proof}

The trivialization $h_{\sigma} : W_{\sigma}/T^{\sigma} \to \stackrel{\circ}{P}_{\sigma}\times F_{\sigma}$ induces the projection  $\xi _{\sigma} :  W_{\sigma}/T^{\sigma} \to F_{\sigma}$. For any   point $c_{\sigma}\in F_{\sigma}$,  define a  subspace $W_{\sigma}[\xi _{\sigma},c_{\sigma}]$ of  $W_{\sigma}$ by
\begin{equation}\label{f-leaf}
W_{\sigma}[\xi _{\sigma},c_{\sigma}] = (\pi _{\sigma} ^{-1}\circ \xi _{\sigma}^{-1})(c_{\sigma}),
\end{equation}
where $\pi _{\sigma} : W_{\sigma}\to W_{\sigma}/T^{\sigma}$ is a projection.

\begin{defn}
The space $W_{\sigma}[\xi _{\sigma},c_{\sigma}]$ is said to be the  {\it leaf} of a stratum $W_{\sigma}$ given by the trivialization $h_{\sigma}$.
\end{defn}

Note that, by its definition, a leaf $W_{\sigma}[\xi _{\sigma},c_{\sigma}]$ is invariant under the action of the torus $T^k$ and  $W_{\sigma} = \cup _{c_{\sigma}\in F_{\sigma}}W_{\sigma}[\xi _{\sigma},c_{\sigma}]$.

The definition of a  leaf also  implies:
\begin{lem}\label{l-leaf}
Let $\widehat{\mu}_{\xi _{\sigma},c_{\sigma}}$ denote the   restriction of the map $\widehat{\mu} : M^{2n}/T^k\to P^k$  to $W_{\sigma}[\xi _{\sigma},c_{\sigma}]/T^{\sigma}$.
Then the map  $\widehat{\mu}_{\xi _{\sigma},c_{\sigma}} : W_{\sigma}[\xi _{\sigma},c_{\sigma}]/T^{\sigma} \to \stackrel{\circ}{P_{\sigma}}$ is a homeomorphism for any $c_{\sigma}\in F_{\sigma}$.
\end{lem}

Moreover, we obtain:
\begin{lem}\label{leaf-bound}
$\widehat{\mu} (\overline{W_{\sigma}[\xi _{\sigma}, c_{\sigma}]/T^{\sigma}}) = P_{\sigma}$.
\end{lem}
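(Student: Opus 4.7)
The plan is to imitate the proof of Corollary \ref{closure} almost verbatim, using Lemma \ref{leaf} in place of Axiom \ref{fiber}. The only conceptual point to keep in mind is that since the stabilizer $\chi(W_\sigma)$ is constant on the stratum (Axiom \ref{stab}) and $T^\sigma = T^k/\chi(W_\sigma)$, the two quotients $W_\sigma[\xi_\sigma,c_\sigma]/T^k$ and $W_\sigma[\xi_\sigma,c_\sigma]/T^\sigma$ coincide, so Lemma \ref{leaf} gives $\widehat{\mu}(W_\sigma[\xi_\sigma,c_\sigma]/T^k)=\stackrel{\circ}{P_\sigma}$.

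First I would establish the inclusion $\widehat{\mu}(\overline{W_\sigma[\xi_\sigma,c_\sigma]/T^k})\subseteq P_\sigma$. By continuity of $\widehat{\mu}$ on $M^{2n}/T^k$, the image of the closure is contained in the closure of the image, which by the preceding remark is $\overline{\stackrel{\circ}{P_\sigma}}=P_\sigma$.

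For the reverse inclusion I would use compactness. Since $M^{2n}$ is compact by the standing assumption, so is $M^{2n}/T^k$, and hence the closed subset $\overline{W_\sigma[\xi_\sigma,c_\sigma]/T^k}$ is compact. Its image under the continuous map $\widehat{\mu}$ is then a compact, hence closed, subset of the Hausdorff space $P^k\subset\R^k$. By Lemma \ref{leaf} this closed set already contains $\stackrel{\circ}{P_\sigma}$, so it must contain $\overline{\stackrel{\circ}{P_\sigma}}=P_\sigma$. Combining the two inclusions yields the claimed equality.

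There is no serious obstacle; the only thing to be careful about is the indexing discrepancy between the $T^\sigma$-quotient used in the statement of Lemma \ref{leaf} and the $T^k$-quotient appearing in the statement of Lemma \ref{leaf-bound}, which is resolved by Axiom \ref{stab} as explained above. In particular no additional structural data beyond what is given in Axioms \ref{atlas}--\ref{fiber} is needed.
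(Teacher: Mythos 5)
Your proof is correct and follows essentially the same route as the paper's: continuity of $\widehat{\mu}$ gives the inclusion into $P_\sigma$, and compactness of the closed subset $\overline{W_\sigma[\xi_\sigma,c_\sigma]/T^k}$ in the compact space $M^{2n}/T^k$ forces the image to be a compact set containing $\stackrel{\circ}{P_\sigma}$, hence all of $P_\sigma$. Your explicit observation that the $T^k$- and $T^\sigma$-quotients of the leaf coincide (since the $T^k$-action factors through $T^\sigma$) cleanly resolves a notational wrinkle that the paper passes over silently.
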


\begin{proof}
It follows from Lemma~\ref{l-leaf}  that $\widehat{\mu} (\overline{W_{\sigma}[\xi _{\sigma}, c_{\sigma}]/T^{\sigma}}) \subseteq P_{\sigma}$, since $\widehat{\mu}$ is a continuous map.  On the other hand, $\overline{W_{\sigma}[\xi _{\sigma}, c_{\sigma}]/T^{\sigma}}$ is a closed subset in  the compact space $M^{2n}/T^k$ and,  thus,  it is a compact set as well. It implies that $\widehat{\mu} (\overline{W_{\sigma}[\xi _{\sigma}, c_{\sigma}]/T^{\sigma}})$ is a  compact set in $P_{\sigma}$ which   contains the interior of $P_{\sigma}$,  which further  implies  that $\widehat{\mu} (\overline{W_{\sigma}[\xi _{\sigma}, c_{\sigma}]/T^{\sigma}}) = P_{\sigma}$.
%Let $p$ be a point from the boundary of $P_{\sigma}$ and $(p_{n})$ a sequence in $\stackrel{\circ}{P_{\sigma}}$ such that $p_{n}$ converges to $p$. Let us consider the sequence $x_{n}$ in $W_{\sigma}[\xi _{\sigma}, c_{\sigma}]/T^{k}$ defined by $x_n= \widehat{\mu}_{\xi _{\sigma}, c_{\sigma}}^{-1}(x_n)$. Since $M^{2n}/T^k$ is a compact space, it follows that  this sequence contains subsequence $\tilde{x_n}$ which   is convergent, meaning that $\tilde{x_n}$ converges to some point $\tilde{x}\in \overline{W_{\sigma}[\xi _{\sigma}, c_{\sigma}]/T^{k}}$. Then, by the continuity of the map $\widehat{\mu}$, we obtain that the sequence $\tilde{p_{n}} =  \widehat{\mu}(\tilde{x_n})$ converges to both $p$ and $\widehat{\mu}(\tilde{x})$, what implies that $p = \widehat{\mu}(\tilde{x})$.
\end{proof}

Suppose   it is given an $2n$-dimensional manifold $M^{2n}$ with an effective action of the  algebraic torus $(\C ^{*})^{k}$. Assume that  the  induced   action of the compact torus $T^{k}\subset (\C ^{*})^{k}$ on $M^{2n}$ satisfies  Axioms 1-4  and the following folds:
\begin{enumerate}
\item [(1)] Any stratum $W_{\sigma}$ is $(\C ^{*})^{k}$- invariant;
\item [(2)] The free action of the torus $T^{\sigma}$ on $W_{\sigma}$ extends to a free action of the  algebraic torus $(\C ^{*})^{\sigma}$  on $W_{\sigma}$;
\item [(3)] The projections $\hat{\mu} : W_{\sigma}/T^{\sigma} \to \stackrel{\circ}{P}_{\sigma}$ and $\hat{\pi} : W_{\sigma}/T^{\sigma}\to W_{\sigma}/(\C ^{*})^{\sigma}$ define the homeomorphism $h_{\sigma} = (\hat{\mu}, \hat{\pi}) :  W_{\sigma}/T^{\sigma}\to \stackrel{\circ}{P}_{\sigma}\times W_{\sigma}/(\C ^{*})^{\sigma}$.
\end{enumerate}
Then, the space of parameters $F_{\sigma}$  of a stratum $W_{\sigma}$ can be identified with  $F_{\sigma} \cong W_{\sigma}/(\C ^{*})^{\sigma}$.

\begin{defn}
We say than an action of the compact torus $T^k$ on an $(2n,k)$-manifold $M^{2n}$ extends to the compatible action of the  algebraic torus $(\C ^{*})^{k}$  if it is defined an  action of the  algebraic torus $(\C ^{*})^{k}$  on $M^{2n}$, which   satisfies the conditions 1-3  given above.
\end{defn}

We use the description of the strata  as  spaces consisting of leafs to formulate the properties which allow to describe the gluing of the strata. Recall that we have  fixed  the projection  $\xi _{\sigma} :  W_{\sigma}/T^{\sigma} \to F_{\sigma}$ as a part of our structural data.
\begin{ax}\label{leaf}
For any  leaf $W_{\sigma}[\xi _{\sigma}, c_{\sigma}]$ of $W_{\sigma}$  it holds:
\begin{itemize}
\item [a)]  it is a smooth submanifold in $M^{2n}$ and  the induced map $\mu _{\xi _{\sigma}, c_{\sigma}} : W_{\sigma}[\xi _{\sigma}, c_{\sigma}] \to \stackrel{\circ}{P}_{\sigma}$ is a smooth fiber bundle,
\item
 [b)] its boundary $\bar{\partial} W_{\sigma}[\xi _{\sigma}, c_{\sigma}]$ is  the union of  leafs
$W_{\bar{\sigma}}[\xi _{\bar{\sigma}}, c_{\bar{\sigma}}]$ for exactly one $c_{\bar{\sigma}}\in F_{\bar{\sigma}} $, where  $P_{\bar{\sigma}}$ runs through  some admissible faces for  $P_{\sigma}$ and  $\sigma\in \mathfrak{S}$,
\item [c)] the map  $\eta _{\sigma, \bar{\sigma}} : F_{\sigma} \to F_{\bar{\sigma}}$, $\eta _{\sigma, \bar{\sigma}} (c_{\sigma}) =c_{\bar{\sigma}}$  given by b)   is a continuous map.
\end{itemize}
\end{ax}

\begin{rem}
Axiom~\ref{leaf}, as it will be seen  in Section~\ref{gr}, is motivated by the results of Atiyah, Guillemin-Sternberg and Gel'fand-MacPherson about $(\C ^{*})^{n}$-action on the complex Grassmann manifolds $G_{n,k}$.
\end{rem}

We deduce the following important consequence of  the statement {\it b)} of Axiom~\ref{leaf}.

\begin{prop}\label{face}
A face of any admissible polytope is an admissible polytope.
\end{prop}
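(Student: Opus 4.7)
The plan is to reduce the statement to a single facet step: if $P_\sigma$ is admissible, then every facet of $P_\sigma$ is admissible. Granted this, an arbitrary face $Q \subseteq P_\sigma$ is reached by a chain $P_\sigma = Q_0 \supset Q_1 \supset \cdots \supset Q_r = Q$ with each $Q_{i+1}$ a facet of $Q_i$, and iterating the facet step along this chain (each $Q_i$ being admissible by induction) shows that $Q$ is admissible.

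For the facet step I fix an admissible $P_\sigma$ and any leaf $W_\sigma[\xi_\sigma, c_\sigma]$, and apply Axiom~\ref{leaf} to write
\[
\partial W_\sigma[\xi_\sigma, c_\sigma] \;=\; \bigcup_{\bar\sigma} W_{\bar\sigma}[\xi_{\bar\sigma}, c_{\bar\sigma}],
\]
where each $P_{\bar\sigma}$ is an admissible proper face of $P_\sigma$. Taking $T^k$-quotients and applying the continuous map $\widehat{\mu}$, I combine the fiberwise homeomorphism $W_{\bar\sigma}[\xi_{\bar\sigma}, c_{\bar\sigma}]/T^{\bar\sigma} \to \stackrel{\circ}{P_{\bar\sigma}}$ (established in the lemma preceding Axiom~\ref{leaf}) with Lemma~\ref{leaf-bound} (which gives $\widehat{\mu}(\overline{W_\sigma[\xi_\sigma, c_\sigma]/T^k}) = P_\sigma$) to conclude the polytope-level identity $\partial P_\sigma = \bigcup_{\bar\sigma} P_{\bar\sigma}$.

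Now let $F$ be any facet of $P_\sigma$. Then $F = \bigcup_{\bar\sigma}(F \cap P_{\bar\sigma})$, and each intersection $F \cap P_{\bar\sigma}$ is a face of $P_\sigma$ (the intersection of two faces) contained in $F$, hence a face of $F$. Since no polytope is the union of its proper faces, some $F \cap P_{\bar\sigma}$ must equal $F$, i.e.\ $F \subseteq P_{\bar\sigma}$. But $P_{\bar\sigma}$ is a proper face of $P_\sigma$ containing the codimension-one face $F$, which forces $P_{\bar\sigma} = F$; hence $F$ is admissible, completing the facet step and therefore the proposition.

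The main point requiring care is the polytopal covering argument in the third paragraph: one must observe that an intersection of two faces of $P_\sigma$ is itself a face, that any face of $P_\sigma$ contained in $F$ is a face of $F$, and that a polytope cannot be exhausted by its proper faces (which follows by dimension, since each proper face of $F$ lies in $\partial F$). Given the structural lemmas already in place, the remainder of the argument is a routine chase.
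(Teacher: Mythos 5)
Your argument is correct, and its structural ingredients are the same as the paper's (Axiom~\ref{leaf} together with Lemma~\ref{leaf-bound}), but the route is genuinely different. The paper argues directly for an arbitrary face $P_{\tilde\sigma}$: pick $p$ in the relative interior of $P_{\tilde\sigma}$, use Lemma~\ref{leaf-bound} to produce $x\in\partial W_\sigma[\xi_\sigma,c_\sigma]$ with $\mu(x)=p$, invoke Axiom~\ref{leaf} to locate $x$ in some leaf $W_{\bar\sigma}[\xi_{\bar\sigma},c_{\bar\sigma}]$, and then — since Axiom~\ref{fiber} forces $\mu(x)\in\stackrel{\circ}{P_{\bar\sigma}}$ — conclude $p$ lies in the relative interiors of both $P_{\tilde\sigma}$ and $P_{\bar\sigma}$, whence $P_{\tilde\sigma}=P_{\bar\sigma}$. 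You instead globalize first (proving $\partial P_\sigma=\bigcup_{\bar\sigma}P_{\bar\sigma}$), then descend to facets by a polytopal covering argument, and finally climb the face lattice by induction. Your version costs an extra induction and a standard combinatorial lemma (intersections of faces are faces; a polytope is not the union of its proper faces), but it has the advantage of making the global boundary decomposition explicit and of sidestepping the pointwise choice that the paper handles somewhat tersely. Both proofs are sound; the paper's is shorter, yours is arguably more self-contained in its convexity bookkeeping.

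Two small cautions worth keeping in mind if you write this up. First, when you pass from $\partial W_\sigma[\xi_\sigma,c_\sigma]=\bigcup_{\bar\sigma}W_{\bar\sigma}[\xi_{\bar\sigma},c_{\bar\sigma}]$ to the quotient, you are implicitly using that $T^k$ is compact so the projection $M^{2n}\to M^{2n}/T^k$ is closed and commutes with taking closures and boundaries; it is worth a word. Second, to get $\partial P_\sigma=\bigcup_{\bar\sigma}P_{\bar\sigma}$ rather than mere containment you need both inclusions: $\supseteq$ because each $P_{\bar\sigma}$ is a proper face of $P_\sigma$, and $\subseteq$ from $\widehat\mu(\overline{W_\sigma[\xi_\sigma,c_\sigma]/T^k})=P_\sigma$ together with the leaf homeomorphisms. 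You did gesture at this, but the two directions deserve separate sentences.
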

\begin{proof}
Let us fix some  face $P_{\tilde{\sigma}}$ of an admissible polytopes $P_{\sigma}$ and let us consider a point $p\in \stackrel{\circ}{P_{\tilde{\sigma}}}$. By  Lemma~\ref{leaf-bound},  we see  that $\widehat{\mu}^{-1}(p)\cap \bar{\partial} W_{\sigma}[\xi _{\sigma}, c_{\sigma}]/T^{k}\neq \emptyset$.  Therefore,  there exists  a point $x$ from $\bar{\partial}$-boundary of the leaf  $W_{\sigma}[\xi _{\sigma}, c_{\sigma}]$ such that $\pi (x) \in \widehat{\mu}^{-1}(p)$. It implies that $\mu (x) = p$. By Axiom~\ref{leaf}, we have that  $x$ belongs to some leaf   $W_{\bar{\sigma}}[\xi _{\bar{\sigma}}, c_{\bar{\sigma}}]$, where $P_{\bar{\sigma}}$ is a face of $P_{\sigma}$.  This implies that $P_{\bar{\sigma}}$ is an admissible polytope and that  $\mu (x)=p \in  \stackrel{\circ}{P_{\bar{\sigma}}}$. Therefore,  $p\in  \stackrel{\circ}{P_{\tilde{\sigma}}}\cap \stackrel{\circ}{P_{\bar{\sigma}}}$, and,  since $P_{\bar{\sigma}}$ and $P_{\tilde{\sigma}}$ are  faces of the same polytope $P_{\sigma}$,  it  follows that $P_{\tilde{\sigma}} = P_{\bar{\sigma}}$. Therefore, $P_{\tilde{\sigma}}$ is an admissible polytope.
\end{proof}

\begin{rem}
In the case of the canonical  action of the  algebraic torus $(\C ^{*})^{k+1}$on $G_{k+1,q}$, this result is obtained in~\cite{AT, GS}.
\end{rem}
\begin{rem}
Note that combining Axiom~\ref{leaf}  and  the proof of  Proposition~\ref{face} we obtain  that  if   $P_{\bar{\sigma}}$  is a face of $P_{\sigma}$ then  there exists a  leaf $W_{\bar{\sigma}}[\xi _{\bar{\sigma}}, c_{\tilde{\sigma}}]$ which is contained in the $\bar{\partial}$-boundary of the leaf $W_{\sigma}[\xi _{\sigma}, c_{\sigma}]$.
\end{rem}

It follows that the condition {\it b)} of Axiom~\ref{leaf} can be strengthen:

\begin{cor}\label{all}
For any $c_{\sigma}\in F_{\sigma}$, the boundary $\bar{\partial} W_{\sigma}[\xi _{\sigma}, c_{\sigma}]$ of a leaf $W_{\sigma}[\xi _{\sigma},c_{\sigma}]$ of a  stratum  $W_{\sigma}$  is  the union of  leafs
$W_{\bar{\sigma}}[\xi _{\bar{\sigma}}, c_{\bar{\sigma}}]$ for exactly one $c_{\bar{\sigma}}\in F_{\bar{\sigma}} $, where  $P_{\bar{\sigma}}$ runs through  {\bf all} faces for  $P_{\sigma}$.
\end{cor}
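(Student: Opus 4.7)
\textbf{Proof proposal for Corollary~\ref{all}.} The plan is to show that Axiom~\ref{leaf}, which only guarantees some admissible faces of $P_\sigma$ appear in the boundary decomposition, actually forces every face to appear, and that this follows essentially from the argument already used in Proposition~\ref{face} together with the disjointness of relative interiors of distinct faces of a convex polytope.

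First, I fix a leaf $W_{\sigma}[\xi_{\sigma}, c_{\sigma}]$ and let $P_{\tilde{\sigma}}$ be an arbitrary face of $P_{\sigma}$. By Proposition~\ref{face}, $P_{\tilde{\sigma}}$ is an admissible polytope, so $\tilde{\sigma}\in\mathfrak{S}$ and the space of parameters $F_{\tilde{\sigma}}$ is defined. If $P_{\tilde{\sigma}}=P_\sigma$ there is nothing to show, so I may assume $P_{\tilde{\sigma}}$ is a proper face and pick any point $p\in\stackrel{\circ}{P_{\tilde{\sigma}}}\subset \partial P_{\sigma}$.

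Next, by Lemma~\ref{leaf-bound} we have $\widehat{\mu}(\overline{W_{\sigma}[\xi_{\sigma},c_{\sigma}]/T^k})=P_{\sigma}$, so there exists $x\in\overline{W_{\sigma}[\xi_{\sigma},c_{\sigma}]}$ with $\mu(x)=p$. The first bullet of Axiom~\ref{fiber} gives $\mu(W_\sigma)=\stackrel{\circ}{P_\sigma}$, and since $p\notin\stackrel{\circ}{P_\sigma}$, the point $x$ must lie in the boundary $\partial W_{\sigma}[\xi_{\sigma},c_{\sigma}]$. Applying Axiom~\ref{leaf}, $x$ belongs to some leaf $W_{\bar\sigma}[\xi_{\bar\sigma},c_{\bar\sigma}]$ for exactly one $c_{\bar\sigma}\in F_{\bar\sigma}$, where $P_{\bar\sigma}$ is one of the admissible faces of $P_\sigma$ appearing in the decomposition. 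Again by the first bullet of Axiom~\ref{fiber}, $\mu(x)\in\stackrel{\circ}{P_{\bar\sigma}}$, so $p\in\stackrel{\circ}{P_{\bar\sigma}}\cap\stackrel{\circ}{P_{\tilde{\sigma}}}$.

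The final step is the identification $P_{\bar\sigma}=P_{\tilde\sigma}$. This is where I use the standard fact about convex polytopes: the relative interiors of distinct faces of a convex polytope are pairwise disjoint. Since both $P_{\bar\sigma}$ and $P_{\tilde\sigma}$ are faces of the same polytope $P_\sigma$ whose relative interiors share the point $p$, they must coincide. Consequently $\tilde\sigma=\bar\sigma$ and the entire leaf $W_{\tilde\sigma}[\xi_{\tilde\sigma},c_{\tilde\sigma}]$ is contained in $\partial W_{\sigma}[\xi_{\sigma},c_{\sigma}]$. As $P_{\tilde\sigma}$ was an arbitrary face of $P_\sigma$, this upgrades the ``some'' in Axiom~\ref{leaf} to ``all'', proving the corollary. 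The only nontrivial geometric input is the disjointness of relative interiors of faces; everything else is a direct bookkeeping consequence of the axioms and of the proof of Proposition~\ref{face}, which is exactly what the remark preceding the corollary indicates.
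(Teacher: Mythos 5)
Your proof is correct and takes essentially the same route as the paper's. The paper's argument is a single remark: Axiom~\ref{leaf} together with the reasoning inside the proof of Proposition~\ref{face} shows that for every face $P_{\tilde\sigma}$ of $P_\sigma$ some leaf over $P_{\tilde\sigma}$ occurs in $\partial W_{\sigma}[\xi_{\sigma}, c_{\sigma}]$, which combined with the ``union over some faces'' statement of Axiom~\ref{leaf} upgrades to ``union over all faces.'' You reproduce exactly that reasoning, and you are in fact a bit more careful than the paper at the one point where care matters: you take $p$ in the \emph{relative interior} of $P_{\tilde\sigma}$, note that Axiom~\ref{fiber} forces $\mu(x)\in\stackrel{\circ}{P_{\bar\sigma}}$, and then invoke disjointness of relative interiors of distinct faces to conclude $P_{\bar\sigma}=P_{\tilde\sigma}$. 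The paper's proof of Proposition~\ref{face} only says ``$p\in P_{\tilde\sigma}\cap P_{\bar\sigma}$ implies $P_{\tilde\sigma}=P_{\bar\sigma}$,'' which is not literally true for arbitrary points of two faces and implicitly needs the relative-interior refinement you made explicit. So: same approach, slightly tightened bookkeeping.
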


The statement  {\it a)} of Axiom~\ref{leaf} combining  with Corollary~\ref{all} directly implies that Lemma~\ref{leaf-bound} can be strengthen:
\begin{cor}\label{cl-leaf}
The map $\widehat{\mu} : \overline{W_{\sigma}[\xi _{\sigma}, c_{\sigma}]/T^{\sigma}} \to P_{\sigma}$ is a homeomorphism.
\end{cor}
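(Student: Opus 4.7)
The plan is to read off the statement as a consequence of the strengthened boundary description in Corollary~\ref{all}, combined with Lemma~\ref{leaf} applied face by face, and then invoke the standard fact that a continuous bijection from a compact space to a Hausdorff space is a homeomorphism.

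First I would unpack the domain. Because $\chi(W_{\sigma})$ acts trivially on $W_{\sigma}$, the inclusion $W_{\sigma}\hra M^{2n}$ identifies $W_{\sigma}/T^{\sigma}$ with the image of $W_{\sigma}$ in $M^{2n}/T^{k}$, and similarly for each leaf. Taking the closure in $M^{2n}/T^{k}$ and using continuity of $\pi : M^{2n}\to M^{2n}/T^{k}$, together with Corollary~\ref{all}, gives the decomposition
\[
\overline{W_{\sigma}[\xi_{\sigma},c_{\sigma}]/T^{\sigma}}\;=\; W_{\sigma}[\xi_{\sigma},c_{\sigma}]/T^{\sigma}\;\cup\;\bigcup_{P_{\bar\sigma}\ \text{face of}\ P_{\sigma}} W_{\bar\sigma}[\xi_{\bar\sigma},c_{\bar\sigma}]/T^{\bar\sigma},
\]
where for each face $P_{\bar\sigma}$ the parameter $c_{\bar\sigma}\in F_{\bar\sigma}$ is the unique one supplied by Corollary~\ref{all}. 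The union is disjoint because the admissible spaces $W_{\sigma}, W_{\bar\sigma}$ are pairwise disjoint, hence their $T^{k}$-quotients are disjoint in $M^{2n}/T^{k}$.

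Next I would verify the three properties. Continuity of $\widehat{\mu}|_{\overline{W_{\sigma}[\xi_{\sigma},c_{\sigma}]/T^{\sigma}}}$ is immediate from continuity of $\widehat{\mu}:M^{2n}/T^{k}\to P^{k}$. Surjectivity onto $P_{\sigma}$ is Lemma~\ref{leaf-bound}. For injectivity, take any $p\in P_{\sigma}$ and let $P_{\bar\sigma}$ be the unique face of $P_{\sigma}$ whose relative interior contains $p$ (convex-geometric fact: the relative interiors of the faces of $P_{\sigma}$, including $P_{\sigma}$ itself, partition $P_{\sigma}$). Any preimage of $p$ must lie in one of the pieces of the decomposition above; but Lemma~\ref{leaf} says that $\widehat{\mu}$ sends the piece indexed by $\tau$ bijectively onto $\stackrel{\circ}{P_{\tau}}$, so the preimages of $p$ sit only in the piece indexed by $\bar\sigma$ and that piece contributes exactly one preimage. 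Thus $\widehat{\mu}$ is a bijection on the closure.

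Finally, since $M^{2n}/T^{k}$ is compact and Hausdorff, the closed subspace $\overline{W_{\sigma}[\xi_{\sigma},c_{\sigma}]/T^{\sigma}}$ is compact, and $P_{\sigma}\subset\R^{k}$ is Hausdorff, so a continuous bijection between them is automatically a homeomorphism. The only delicate point I would want to double-check is the disjointness of the pieces in the decomposition together with the statement that every face of $P_{\sigma}$ really does appear (not just some of them) with the \emph{correct} compatible $c_{\bar\sigma}$; both are precisely what Corollary~\ref{all} promises, so the argument closes once that corollary is in hand.
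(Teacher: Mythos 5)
Your proposal is correct and spells out exactly the argument the paper leaves implicit (the paper just says Corollary~\ref{all} lets one strengthen Lemma~\ref{leaf-bound}): decompose the closure via Corollary~\ref{all} into the leaf plus one leaf per face with compatible parameter, apply Lemma~\ref{leaf} piecewise to get bijectivity over the partition of $P_{\sigma}$ by relative interiors of faces, get surjectivity from Lemma~\ref{leaf-bound}, and conclude by the compact-to-Hausdorff continuous bijection argument. Nothing is missing; you've simply made the paper's one-line justification fully explicit.
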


Let $P_{\bar{\sigma}}$ be a face of $P_{\sigma}$. Then  Corollary~\ref{all}  implies an  existence of the map
\begin{equation}\label{eta}
\eta _{\sigma, \bar{\sigma}} : F_{\sigma} \to F_{\bar{\sigma}}
\end{equation}
defined by: $\eta _{\sigma, \bar{\sigma}}(c_{\sigma})$ is a unique point $c_{\bar{\sigma}}$ from $F_{\bar{\sigma}}$ such that $W_{\bar{\sigma}}[\xi _{\bar{\sigma}}, c_{\bar{\sigma}}] \subset \bar{\partial} W_{\sigma}[\xi _{\sigma}, c_{\sigma}]$. The statement {\it c)} of  Axiom~\ref{leaf}  states that the map $\eta _{\sigma , \bar{\sigma}}$ is a continuous map.

Let now $P_{\bar{\bar{\sigma}}}$ be a face of $P_{\bar{\sigma}}$ and $P_{\bar{\sigma}}$ be a face of $P_{\sigma}$. Then Axiom~\ref{leaf} states an existence  of  the  projections $\xi _{\bar{\bar{\sigma}}} : W_{\bar{\bar{\sigma}}}/T^k\to F_{\bar{\bar{\sigma}}}$ and  $\xi _{\sigma} : W_{\sigma}/T^k\to F_{\sigma}$ such that
\[
W_{\bar{\bar{\sigma}}}[\xi _{\bar{\bar{\sigma}}}, c_{\bar{\bar{\sigma}}}] \subset \bar{\partial} W_{\bar{\sigma}}[\xi _{\bar{\sigma}}, c_{\bar{\sigma}}]\subset   \overline{W_{\bar{\sigma}}[\xi _{\bar{\sigma}}, c_{\bar{\sigma}}]} , \;\; W_{\bar{\sigma}}[\xi _{\bar{\sigma}}, c_{\bar{\sigma}}]\subset \bar{\partial} W_{\sigma}[\xi _{\sigma}, c_{\sigma}]\subset   \overline{W_{\sigma}[\xi _{\sigma}, c_{\sigma}]}
\]

Since the leafs are disjoint,  it follows that
\[
W_{\bar{\bar{\sigma}}}[\xi _{\bar{\bar{\sigma}}}, c_{\bar{\bar{\sigma}}}]\subset  \overline{W_{\sigma}[\xi _{\sigma}, c_{\sigma}]}\setminus  W_{\sigma}[\xi _{\sigma}, c_{\sigma}] = \bar{\partial}W_{\sigma}[\xi _{\sigma}, c_{\sigma}] .\]

Altogether this implies:
\begin{cor}
For any pair $P_{\bar{\sigma}}\subset P_{\sigma }$ there exists the map  $\eta _{\sigma ,\sigma ^{'}} : F_{\sigma}\to F_{\bar{\sigma}}$ such that if $P_{\bar{\bar{\sigma}}}\subset P_{\bar{\sigma}}\subset P_{\sigma}$ then
$\eta _{\bar{\sigma} ,\bar{\bar{\sigma}}}\circ \eta _{\sigma ,\bar{\sigma}} = \eta _{\sigma ,\bar{\bar{\sigma}}}$.
\end{cor}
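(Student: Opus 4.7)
The plan is to exploit the uniqueness clause built into Axiom~\ref{leaf} (and strengthened by Corollary~\ref{all}): once I can produce \emph{any} point $c^{\ast}\in F_{\bar{\bar{\sigma}}}$ satisfying $W_{\bar{\bar{\sigma}}}[\xi_{\bar{\bar{\sigma}}},c^{\ast}]\subset \partial W_{\sigma}[\xi_{\sigma},c_{\sigma}]$, that point must coincide with $\eta_{\sigma,\bar{\bar{\sigma}}}(c_{\sigma})$. The map $\eta_{\sigma,\bar{\sigma}}$ itself has already been produced in (\ref{eta}); what the corollary really requires is only the composition identity.

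Accordingly, I would fix $c_{\sigma}\in F_{\sigma}$ and set $c_{\bar{\sigma}}=\eta_{\sigma,\bar{\sigma}}(c_{\sigma})$ and $c_{\bar{\bar{\sigma}}}=\eta_{\bar{\sigma},\bar{\bar{\sigma}}}(c_{\bar{\sigma}})$. By the very definition of these two values we have the two leaf inclusions $W_{\bar{\sigma}}[\xi_{\bar{\sigma}},c_{\bar{\sigma}}]\subset \partial W_{\sigma}[\xi_{\sigma},c_{\sigma}]$ and $W_{\bar{\bar{\sigma}}}[\xi_{\bar{\bar{\sigma}}},c_{\bar{\bar{\sigma}}}]\subset \partial W_{\bar{\sigma}}[\xi_{\bar{\sigma}},c_{\bar{\sigma}}]$. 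Passing to closures in the first gives $\overline{W_{\bar{\sigma}}[\xi_{\bar{\sigma}},c_{\bar{\sigma}}]}\subset \overline{W_{\sigma}[\xi_{\sigma},c_{\sigma}]}$, and substituting this into the second yields $W_{\bar{\bar{\sigma}}}[\xi_{\bar{\bar{\sigma}}},c_{\bar{\bar{\sigma}}}]\subset \overline{W_{\sigma}[\xi_{\sigma},c_{\sigma}]}$.

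To upgrade this closure-inclusion to a boundary-inclusion I would invoke the pairwise disjointness of admissible spaces proved earlier: since $W_{\bar{\bar{\sigma}}}\cap W_{\sigma}=\emptyset$ and each leaf is contained in its ambient stratum, we have $W_{\bar{\bar{\sigma}}}[\xi_{\bar{\bar{\sigma}}},c_{\bar{\bar{\sigma}}}]\cap W_{\sigma}[\xi_{\sigma},c_{\sigma}]=\emptyset$, and hence $W_{\bar{\bar{\sigma}}}[\xi_{\bar{\bar{\sigma}}},c_{\bar{\bar{\sigma}}}]\subset \overline{W_{\sigma}[\xi_{\sigma},c_{\sigma}]}\setminus W_{\sigma}[\xi_{\sigma},c_{\sigma}] = \partial W_{\sigma}[\xi_{\sigma},c_{\sigma}]$. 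Applying Corollary~\ref{all} to the face $P_{\bar{\bar{\sigma}}}$ of $P_{\sigma}$ then forces $\eta_{\sigma,\bar{\bar{\sigma}}}(c_{\sigma})=c_{\bar{\bar{\sigma}}}$, which is the claimed composition formula.

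The only subtlety, and essentially the sole bookkeeping point, is the double role of the symbol $\partial$: the transitivity of containments propagates naturally at the level of closures, so one must separately invoke the disjointness of distinct strata in order to rule out that the lower-dimensional leaf accidentally returns to meet the open leaf $W_{\sigma}[\xi_{\sigma},c_{\sigma}]$. I do not anticipate any genuine obstacle beyond this step.
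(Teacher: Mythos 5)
Your argument is correct and follows the same route as the paper: iterate the leaf-boundary relation from Axiom~\ref{leaf}, propagate the inclusions through a closure argument, and then invoke the uniqueness clause of Corollary~\ref{all} to identify the resulting leaf with the one determined by $\eta_{\sigma,\bar{\bar{\sigma}}}$. The paper compresses the key set-theoretic step into the cryptic chain $W_{\bar{\bar{\sigma}}}[\cdot]\subset\partial W_{\bar{\sigma}}[\cdot]\subset\partial\partial W_{\sigma}[\cdot]=\partial W_{\sigma}[\cdot]$, implicitly using that $\partial W_{\sigma}[\xi_{\sigma},c_{\sigma}]$ is closed so that closures of subsets stay inside it; you make this explicit and, in addition, correctly supply the disjointness of distinct admissible spaces to rule out that the lower leaf meets $W_{\sigma}[\xi_{\sigma},c_{\sigma}]$ itself, which is exactly the point the paper's shorthand glosses over.
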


%%%%%%%%%%%%%%%%%%   7

\section{On singular and regular values \\ of the almost moment map}
We characterize the  singular and regular values of the almost moment map $\mu : M^{2n} \to P^k$.
\begin{defn}
The  cort\'ege $\sigma (x)$ of a  point $x\in P^{k}$ is a set of admissible polytopes  defined by:
\[
\sigma(x) = \{ P_{\sigma} \in P_{\mathfrak{S}}: x\in \stackrel{\circ}{P}_{\sigma}\}.
\]
\end{defn}

Obviously,  $\sigma(x)\neq \emptyset$ for any point  $x\in P^k$,  since $\cup _{\sigma\in \mathfrak{S}}\stackrel{\circ}{P}_{\sigma}=P^k$.
Moreover,  $P^k\in \sigma(x)$ for any point $x\in \stackrel{\circ}{P^{k}}$ .
\begin{defn}
The  point $x\in P^k$ is said to be  a regular point   if $\dim P_{\sigma} =k$ for all  $P_{\sigma}\in \sigma (x)$.
\end{defn}

Note that if  $x\in P^k$ is a regular point  then $x\in \stackrel{\circ}{P^{k}}$. By $P^{k}_{r}$ we denote the set of regular points in $P^{k}$.

\begin{rem}\label{regularpoints}
Note that the set of regular points in $\stackrel{\circ}{P^{k}}$  is a  non-empty set and moreover it is a dense set in $\stackrel{\circ}{P^{k}}$. This  follows from the fact that there are finitely many admissible polytopes, so the union of  admissible polytopes of the dimension less then $k$ has the dimension less then $k$.
\end{rem}

For a given $(2n,k)$-manifold $M^{2n}$,  let $Z$ denote the union of all open  admissible polytopes $\stackrel{\circ}{P}_{\sigma}$ whose dimension is $<k$. The following immediately holds:
 \begin{lem}
The set   $\stackrel{\circ}{P^{k}}\setminus (Z\cap \stackrel{\circ}{P^{k}})$ coincides with $P^{k}_{r}$. In particular $P^{k}_{r}$ is an open set in $P^{k}$ which  has finitely many connected components.
\end{lem}

\begin{ex}
 For the Grassmann manifold $G_{4,2}$,  the set  $P^{3}_{r}\subset \stackrel{\circ}{\Delta}_{4,2}$  is  the   complement to the union of   three open  diagonal squares $\stackrel{\circ}{P}_{12, 34}$, $\stackrel{\circ}{P}_{13, 24}$, $\stackrel{\circ}{P}_{14,23}$ in  $\stackrel{\circ}{\Delta}_{4,2}$ (see~\cite{BT-1}).   

For the Grassmann manifold $G_{5,2}$,  the set  $P^{4}_{r}\subset \stackrel{\circ}{\Delta}_{5,2}$ is the   complement to the union of of  ten  open prisms $\stackrel{\circ}{P}_{i}$, $1\leq i\leq 10$ in $\stackrel{\circ}{\Delta}_{5,2}$ (see~\cite{BTN},Proposition 9).
\end{ex}

 Recall that  $x$  is a regular value of the almost moment map $\mu :  M^{2n}\to P^k$  if and only if any $y\in \mu ^{-1}(x)$ is a regular point,  that is the  differential of $\mu$ at $y$ has  rank equal to $k$.

\begin{thm}\label{rpv}
If  $x\in P^k_{r}$     then $x$  is  a  regular   value for the almost  moment map $\mu : M^{2n}\to P^k$.
\end{thm}
\begin{proof}
Let  $x\in P^k$ be  a regular point. It holds  that  $ \mu ^{-1}(x)\subset \cup _{\sigma} W_{\sigma}$, where  the union goes over all admissible sets $\sigma$ such that $P_{\sigma}\in \sigma (x)$.   Now, if $y\in \mu ^{-1}(x)\cap W_{\sigma}$,  then $y$ belongs to a  unique leaf $W_{\sigma}[\xi _{\sigma}, c_{\sigma}]$. The differential of the map $\mu _{\xi _{\sigma}, c_{\sigma}} : W_{\sigma}[\xi _{\sigma} , c_{\sigma}] \to \stackrel{\circ}{P}_{\sigma}$ is  an epimorphism, according to Axiom~\ref{leaf}. Since $P_{\sigma}\in \sigma (x)$,  it follows that the rank of the differential of $\mu _{\xi _{\sigma}, c_{\sigma}}$  at  $y$ is equal to $\dim P_{\sigma} =k$, which proves the statement.
% isThe rank of the differential for $\mu$ at the point $y\in W_{\sigma} $ is, by the general slice theorem~\cite{BR} and Axiom~%\ref{fiber},   equal to the dimension of $P_{\sigma}$. It follows that $y$ is a regular point for $\mu $ if and only if $\dim P_{\sigma}%=k$, which proves the statement.
\end{proof}

Note that the above proof implies that any point of the main stratum $W$ is a regular point of the almost moment map.
%\begin{cor}
%If the point $x\in P^k$ is a regular value of the almost moment map  $\mu$ then $x\in \stackrel{\circ}{P^{k}}$.
%\end{cor}
\begin{cor}\label{submanif}
For any point  $x\in \stackrel{\circ}{P^{k}}$ the preimage $M_{x}^{2n-k}=\mu ^{-1}(x)$ is a closed submanifold in $M^{2n}$ of  the  dimension $2n-k$. The torus $T^k$ acts freely on the manifold  $M_{x}^{2n-k}$  and the orbit space $M_{x}^{2n-k}/T^k$  is a smooth  manifold of the dimension $2n-2k$, which  can be identified with some  compactification of the space of parameters $F$ of the main stratum.
\end{cor}

From Remark~\ref{regularpoints}, it follows  the well-known fact:
\begin{cor}
The set of regular values  of the almost  moment map $\mu : M^{2n}\to P^{k}$ is a dense set in $P^k$.
\end{cor}
%Note that the manifold $M_{x}^{2(n-k)}=\mu ^{-1}(x)$ is a compactification of the space of parameters $F$ of the main stratum,   %for a regular point $x\in P^k$.

\begin{rem}
From our axioms, in general case,  does not follow that the set of regular values of the almost moment map $\mu$ coincides with the set $P^{k}_{r}$. Nevertheless, in the case of Grassmann manifolds $G_{k+1, q}$,  it is proved in~\cite{BTN} that these two sets coincide.
\end{rem}

We can push up this further.
\begin{lem}
The set  $M^{2n}_{r}= \mu ^{-1}(P^{k}_{r})$ is a dense submanifold in $M^{2n}$  with a  free action of the torus $T^k$. The orbit space  $M^{2n}_{r}/T^{k}$ is a  smooth manifold which  is dense in   $M^{2n}/T^{k}$. Moreover,  the maps $\mu : M^{2n}_{r} \to P^{k}_{r}$ and $\hat{\mu} : M^{2n}_{r}\to P^{k}_{r}$ are  projections of the smooth fiber bundles.
\end{lem}
\begin{proof}
The  first statement  follows   from the observations that $P^{k}_{r}$ is an open set in $P^{k}$ and  that, by Axiom~\ref{fiber},  $T^{k}$ acts freely on $M^{2n}_{r}$.

The second statement follows from the observation that  the  almost moment   map $\mu : M^{2n}_{r} \to P^{k}_{r}$ is,  by Theorem~\ref{rpv}, a  submersion and moreover, being open,  $\mu$ is a  proper map. Since $\mu$ is $T^k$-invariant for the free $T^k$-action on $M_{r}^{2n}$,  it follows that  the same holds for the induced map $\hat{\mu} : M^{2n}_{r}/T^k \to P^{k}_{r}$.
Then,   classical  Ehressmann's fibration theorem implies  that the maps $\mu$ and $\hat{\mu}$ produce  the stated fiber bundles.
\end{proof}
    Let $P^{k}_{r,1}, \ldots , P^{k}_{r,s}$ be the connected components of  the open set $P^{k}_{r}$ and $M^{2n}_{r, i} = \mu ^{-1}(P^{k}_{r,i})$, $1\leq i\leq s$. Then, the maps  $\mu : M^{2n}_{r,i} \to P^{k}_{r,i}$ define  smooth fiber bundles with connected base. In this way, we obtain:
\begin{cor}\label{diff}
The manifolds  $M_{x}^{2n-k}$ and  $M_{y}^{2n-k}$ are diffeomorphic for any $x, y\in P^{k}_{r,i}$, $1\leq i\leq s$.
\end{cor}

%%%%%%%%%%%%%%%   7.1

\subsection{The main stratum and its  orbit space}\label{main-sp-orb}
 As noted in Example~\ref{main} the main stratum $W$ is a  dense set  in $M^{2n}$,  which  implies that $W/T^k$ is  a  dense set  in the orbit space $M^{2n}/T^k$. Moreover,  $\widehat{\mu}(W/T^k) = \stackrel{\circ}{P^{k}}$ and $W/T^k \cong \stackrel{\circ}{P^{k}}\times F$, where  $F$ denotes the space of parameters of the main stratum  $W$, as introduced  by Definition~\ref{param}. It follows that there exists  such  a  compactification of the product $\stackrel{\circ}{P^{k}}\times F$ that   is homeomorphic to $M^{2n}/T^k$.

The first result in this direction is the following.

\begin{prop}\label{point}
If the space $F$ is a point then the orbit space $M^{2n}/T^k$ is homeomorphic   to the polytope  $P^k$.
\end{prop}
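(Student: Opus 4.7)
The plan is to reduce the statement to the leaf‐closure homeomorphism already established in Corollary~\ref{cl-leaf} applied to the main stratum. First, recall that the main stratum $W = W_{[1,m]}$ satisfies $\chi(W)=\{e\}$, so $T^{\sigma}=T^{k}$ for $\sigma=[1,m]$, and by Corollary~\ref{homeom} we have a fiberwise homeomorphism $W/T^{k}\cong \stackrel{\circ}{P^{k}}\times F$ over $\stackrel{\circ}{P^{k}}$. When $F$ is a single point $\{c\}$, the trivialization $\xi=\xi_{[1,m]}\colon W/T^{k}\to F$ is the constant map, so by the definition~\eqref{leaf} of a leaf the unique leaf $W[\xi,c]$ coincides with $W$ itself; in particular $W[\xi,c]/T^{k}=W/T^{k}$.

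Next, I invoke Corollary~\ref{cl-leaf} with $\sigma = [1,m]$ and $P_{\sigma}=P^{k}$ (which is admissible by Example~\ref{adm_P}). It asserts that the induced map
\[
\widehat{\mu}\colon \overline{W[\xi,c]/T^{k}}\longrightarrow P^{k}
\]
is a homeomorphism. It remains to identify the domain of this map with the entire orbit space $M^{2n}/T^{k}$. Since $W$ is everywhere dense in $M^{2n}$ (Example~\ref{main}) and the projection $\pi\colon M^{2n}\to M^{2n}/T^{k}$ is continuous and surjective, the image $W/T^{k}$ is dense in $M^{2n}/T^{k}$, and as $M^{2n}/T^{k}$ is compact Hausdorff we obtain $\overline{W/T^{k}}=M^{2n}/T^{k}$. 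Combining this with $W[\xi,c]/T^{k}=W/T^{k}$ yields $\overline{W[\xi,c]/T^{k}}=M^{2n}/T^{k}$, and the corollary above then gives the desired homeomorphism $\widehat{\mu}\colon M^{2n}/T^{k}\to P^{k}$.

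The argument is essentially a bookkeeping one and contains no substantial obstacle; the only subtlety is noticing that, because $F$ is a point, there is just one leaf in the main stratum and hence the leaf‐closure identity already collapses the whole orbit space onto $P^{k}$. All of the work is done by Axiom~\ref{leaf} and Corollary~\ref{cl-leaf}, which have already been established.
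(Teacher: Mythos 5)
Your proof is correct and follows essentially the same route as the paper's: identify the main stratum as a single leaf when $F$ is a point, apply Corollary~\ref{cl-leaf} to get $\widehat{\mu}\colon \overline{W/T^{k}}\to P^{k}$ a homeomorphism, and observe that $\overline{W/T^{k}}=M^{2n}/T^{k}$ by density. You have merely spelled out the density argument and the triviality of the stabilizer on the main stratum more explicitly than the paper does.
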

\begin{proof}
If  $F$ is a point, then  $W$ consists of  one leaf. Thus,  Corollary~\ref{cl-leaf} implies that the closure $\overline{W/T^k}$ is homeomorphic   to the polytope $P^k$, which further  implies that $M^{2n}/T^{k}$ is homeomorphic   to the polytope $P^k$.
\end{proof}

\begin{ex}
If $M^{2n}$ is a quasitoric manifold,  then  $F$ is a point.
\end{ex}

Assume now that the space of parameters $F$ of the main stratum  is not a point. Let $x\in  \stackrel{\circ}{P^{k}}$ and  put  $\mathfrak{S}(x) = \{\sigma \in \mathfrak{S} | x\in \stackrel{\circ}{P_{\sigma}}\}$, that is $\sigma \in \mathfrak{S}(x)$ if and only if $P_{\sigma}\in \sigma (x)$.  Then $\widehat{\mu}^{-1}(x)$ is a closed  set in $M^{2n}/T^k$  and
\[
\widehat{\mu}^{-1}(x) =  \cup _{\sigma \in \mathfrak{S}(x)} \{y\in W_{\sigma}/T^k : \widehat{\mu}(y) = x\}.
\]
Since  the subspace  $\{y\in W_{\sigma}/T^k : \widehat{\mu}(y) = x\}$ is homeomorphic to $F_{\sigma}$,  we  introduce a topology on the union $ \cup _{\sigma \in \mathfrak{S}(x)} F_{\sigma}$ such that the space obtained as this union becomes homeomorphic to $\widehat{\mu}^{-1}(x)$.
Let $\overline{F_{x}}$ denote the closure of the space  $F_{x} = \widehat{\mu}^{-1}(x)\cap W/T^k \cong F$. Since $\overline{W/T^k} = M^{2n}/T^k$, it follows that 
\begin{equation}\label{param-cl}
 \overline{F_{x}} = \widehat{\mu}^{-1}(x)\cong \cup _{\sigma \in \mathfrak{S}(x)} F_{\sigma},
 \end{equation}
for any $x\in \stackrel{\circ}{P^{k}}$.  
From Corollary~\ref{submanif} and Corollary~\ref{diff} it follows:
\begin{cor}\label{finite}
If  $x\in P^{k}_{r}$ then $\overline{F_{x}} = M_{x}^{2n-k}/T^k$  is a smooth manifold in $M^{2n}/T^k$.
 Moreover,  the manifolds $\overline{F_{x}}$ and $\overline{F_{y}}$ are homeomorphic for any $x, y\in P^{k}_{r,i}$, $1\leq i\leq s$.
\end{cor}

\begin{rem}
We want to note that although a space  $F_{x}$ is homeomorphic to the space of parameters $F$ for any $x\in \stackrel{\circ}{P^{k}}$, there is no argument to claim that, in general,  their  compactifications  $\overline{F_{x}}$ are homeomorphic.  But,  Corollary~\ref{finite} implies that the set of manifolds   $\overline{F_{x}}$, $x\in P^{k}_{r}$, is finite, up to diffeomorphism.
\end{rem}

We first point the following:
\begin{lem}
The space of parameters $F$ of the main stratum  is a compact space if and only if all points from  $\stackrel{\circ}{P^{k}}$ are simple.
\end{lem}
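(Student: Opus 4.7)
The plan is to exploit the decomposition of the closure $\overline{F_x}$ given by equation (param-cl) together with the fact that the stratification is a disjoint union of the $W_\sigma$. The point is that $F_x$, which is identified with $F$ for every $x \in \stackrel{\circ}{P^k}$, sits inside $\overline{F_x}$ as the $\sigma = [1,m]$ piece of the disjoint union, and the other pieces correspond precisely to the admissible polytopes whose interiors contain $x$.

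First I would record two preliminary observations. (i) Since the strata $W_\sigma$ are pairwise disjoint (Lemma in Section~\ref{jedan}), their quotients $W_\sigma/T^k$ are disjoint in $M^{2n}/T^k$, and consequently the decomposition $\overline{F_x} \cong \bigcup_{\sigma \in \mathfrak{S}(x)} F_\sigma$ in (\ref{param-cl}) is a disjoint union, with $F_x \cong F$ corresponding to the summand $\sigma = [1,m]$. (ii) Since $T^k$ is compact and $M^{2n}$ is a Hausdorff manifold, the orbit space $M^{2n}/T^k$ is compact Hausdorff, so a subset is closed iff it is compact.

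For the direction ($\Leftarrow$), assume every $x \in \stackrel{\circ}{P^k}$ is simple. By definition this means $\mathfrak{S}(x) = \{[1,m]\}$ for every such $x$, so equation (\ref{param-cl}) reduces to $\overline{F_x} = F_x \cong F$. But $\overline{F_x} = \widehat{\mu}^{-1}(x)$ is a closed subset of the compact space $M^{2n}/T^k$, hence compact, so $F$ is compact.

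For the converse direction ($\Rightarrow$), assume $F$ is compact. Fix any $x \in \stackrel{\circ}{P^k}$; then $F_x \cong F$ is compact and therefore closed in the Hausdorff space $M^{2n}/T^k$, so $\overline{F_x} = F_x$. Plugging this into (\ref{param-cl}) gives $F_x = \bigcup_{\sigma \in \mathfrak{S}(x)} F_\sigma$ as a disjoint union in which the $\sigma = [1,m]$ piece is already all of $F_x$; hence every other summand is empty, forcing $\mathfrak{S}(x) = \{[1,m]\}$. This means $x$ lies in no $\stackrel{\circ}{P}_\sigma$ other than $\stackrel{\circ}{P^k}$, i.e., $x$ is simple.

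The argument is essentially a bookkeeping exercise once the disjointness and the Hausdorff/compactness facts are in place, so there is no real obstacle; the only point requiring a little care is to make sure that the identifications $F_x \cong F$ and $F_x \cong F_{[1,m]}$ inside the decomposition (\ref{param-cl}) really coincide, so that the strata being disjoint translates into the summands in the union being disjoint.
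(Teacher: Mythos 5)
Your proof is correct and follows essentially the same route as the paper's own argument: for ($\Leftarrow$) you note that simplicity of interior points reduces~\eqref{param-cl} to $\overline{F_x}=F_x\cong F$ and use compactness of $\widehat\mu^{-1}(x)$; for ($\Rightarrow$) you use compactness of $F\cong F_x$ to get $F_x$ closed, hence $F_x=\overline{F_x}=\widehat\mu^{-1}(x)$, which via disjointness of the strata forces $\mathfrak{S}(x)=\{[1,m]\}$. You merely spell out the Hausdorff/compactness and disjointness bookkeeping that the paper leaves implicit.
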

\begin{proof}
If all  points from  $\stackrel{\circ}{P^{k}}$ are simple, then   the condition $P_{\sigma}\cap \stackrel{\circ}{P^{k}}\neq \emptyset$ implies that $P_{\sigma}=P^{k}$. This  means that $\widehat{\mu}^{-1}(x) \cong F$ for any point $x\in \stackrel{\circ}{P^{k}}$.  Therefore, $F$ is a compact space. In opposite direction, if   $F$ is a compact space, from  the fact that $F\cong F_{x} \subseteq \widehat{\mu}^{-1}(x)$ for any point $x\in \stackrel{\circ}{P^k}$, it follows that     $F\cong  \widehat{\mu}^{-1}(x)$,  which implies  that all points from $\stackrel{\circ}{P^{k}}$ are simple.
\end{proof}

In the case when the  spaces $\overline{F_{x}}$, $x\in \stackrel{\circ}{P^{k}}$ are all  homeomorphic and all points from the boundary of the polytope  $P^k$ are simple,  we provide an explicit topological description of the orbit space $M^{2n}/T^k$.

Let $P_{\sigma_{0}}$ be a face of the polytope $P^{k}$ and by $\mathfrak{S}(\sigma_{0})$ denote the set of those admissible sets $\sigma \in \mathfrak{S}$  for which  $P_{\sigma _{0}}$ is a face of the polytope $P_{\sigma}$ and $P_{\sigma}$ is not a face of the polytope  $P^{k}$.
\begin{prop}\label{paramet-bound}
Assume that all points from $\partial P^k$ are simple,  that a space $\overline{F_{x}}$  is  homeomorphic to the space $\bar{F}$ for all  $x \in \stackrel{\circ}{P^{k}}$.  Then for any $\sigma _{0}\in \mathfrak{S}$, such that  $P_{\sigma_{0}}$ is  a face of the polytope $P^{k},$  there exists a point $x\in \stackrel{\circ}{P^k}$ such that
$
\widehat{\mu} ^{-1}(x) \cong \cup_{\sigma \in \mathfrak{S}(\sigma_{0})} F_{\sigma}$ and, thus
\[
\cup_{\sigma \in \mathfrak{S}(\sigma_{0})} F_{\sigma} \cong \overline{F}.
\]
\end{prop}

\begin{proof}
We first note  that the assumption that all  points from $\partial P^k$ are simple implies that  if $P_{\sigma _{0}}$ is not a face of some admissible polytope $P_{\sigma}$ then $\partial P_{\sigma}\cap P_{\sigma_{0}} =\emptyset$. Otherwise we would have a point $z\in \partial P_{\sigma}\cap P_{\sigma _{0}}$ meaning that $z$ belongs   to some face $P_{\bar{\sigma}}$ of $P_{\sigma}$,  which is an  admissible polytope.  Since $z$ is a simple point,  it follows  that  $P_{\bar{\sigma}} = P_{\sigma _{0}}$. Therefore, since there are only finitely many admissible polytopes, we see that   there exists a neighborhood $U$ of $P_{\sigma _{0}}$ in $P^k$ such that $U\cap P_{\sigma}=\emptyset$ for any admissible polytope $P_{\sigma}$ for which  $P_{\sigma _{0}}$ is not a face of  $P_{\sigma}$. Further $U\cap (\cap _{\sigma \in \mathfrak{S}(\sigma _{0})}P_{\sigma}) \neq \emptyset$ as $P_{\sigma _{0}}$ belongs to all polytopes from $\mathfrak{S}(\sigma _{0})$. For a  point $x\in  U\cap (\cap _{\sigma \in \mathfrak{S}(\sigma _{0})}P_{\sigma})$,  we obtain  $\overline{F}\cong \widehat{\mu}^{-1}(x) \cong \cup_{\sigma \in \mathfrak{S}(\sigma_{0})} F_{\sigma}$,  since $x\in \stackrel{\circ}{P^k}$.
\end{proof}

 By $\stackrel{\circ}{P_{\mathfrak{S}}}$ denote  the set of interiors of all admissible polytopes that  are not  proper faces of the polytope $P^{k}$.
\begin{thm}\label{main-main}
Assume that all points from $\partial P^k$ are simple and that   a space $\widehat{\mu}^{-1}(x) \subset M^{2n}/T^k$ is  homeomorphic to the space $\bar{F}$ for all $x\in \stackrel{\circ}{P^{k}}$. Assume also that  the set $\stackrel{\circ}{P_{\mathfrak{S}}}$ can be divided into  subsets $\stackrel{\circ}{P_{\mathfrak{S}_{1}}}, \ldots , \stackrel{\circ}{P_{\mathfrak{S}_{l}}}$ such that the areas 
 from  $\stackrel{\circ}{P_{\mathfrak{S}_{i}}}$ give the polytopal decomposition  for  the area $\stackrel{\circ}{P^k}$ and $F_{\sigma }\cong  F_{\mathfrak{S}_{i}}$ for any $\sigma \in \mathfrak{S}_{i}$, where $1\leq i\leq l$. Then the orbit space $M^{2n}/T^k$ is homeomorphic to the quotient space 
$
P^{k}\times \bar{F}/\approx ,
$ where
\[
(x,f_1)\approx (y,f_2) \Leftrightarrow x=y\in P_{\sigma _{0}}\subset \partial P^k\; \text{and}\; \eta _{\sigma _{1} ,\sigma _{0}}(f_1) = \eta _{\sigma _{2} ,\sigma _{0}}(f_2).
\]
Here $\sigma _1, \sigma _{2} \in \mathfrak{S}(\sigma _{0})$ are such that $f_1\in F_{\sigma _{1}}$, $f_{2}\in F_{\sigma _{2}}$ and the map $\eta _{\sigma _{i}, \sigma _{0}}$ is defined by the formula~\eqref{eta}.
\end{thm}
\begin{proof}
Put  $W_{\mathfrak{S}_{i}}/T^k = \widehat{\mu}^{-1}(\stackrel{\circ}{P^k}) \cap (\cup _{\sigma \in \mathfrak{S}_{i}}W_{\sigma}/T^k)$. Since $\widehat{\mu}_{\sigma} : W_{\sigma}/T^k \to \stackrel{\circ}{P}_{\sigma}$ is a fiber bundle and,  by the assumption,  all $F_{\sigma}$ for $\sigma \in \mathfrak{S}_{i}$ are homeomorphic to the space $F_{\mathfrak{S}_{i}}$, it follows that $\widehat{\mu} : W_{\mathfrak{S}_{i}}/T^k \to \stackrel{\circ}{P^k}$ is a fiber bundle with a fiber $F_{\mathfrak{S}_{i}}$. Since $\stackrel{\circ}{P^k}$ is contractible, it follows that  $W_{\mathfrak{S}_{i}}/T^k\cong  \stackrel{\circ}{P^k}\times F_{\mathfrak{S}_{i}}$.  Since  $\overline{F}\cong  \widehat{\mu}^{-1}(x)$, for   $x\in \stackrel{\circ}{P^{k}}$, it follows that $\widehat{\mu}^{-1}(\stackrel{\circ}{P^k})  = \cup_{i=1}^{l} W_{\mathfrak{S}_{i}} \cong \stackrel{\circ}{P^k}\times (\cup_{i=1}^{l} F_{\mathfrak{S}_{i}}) \cong \stackrel{\circ}{P^k}\times \bar{F}$, where the last homeomorphism holds according to~\eqref{param-cl}.
% : \widehat{\mu}^{-1}(\stackrel{\circ}{P^{k}})\to  \stackrel{\circ}{P^{k}}$ is a fiber bundle  with a fiber $\overline{F}$. Since $%\stackrel{\circ}{P^{k}}$ is contractible it implies that $\widehat{\mu}^{-1}(\stackrel{\circ}{P^{k}})$ is homeomorphic to $%\stackrel{\circ}{P^{k}}\times \overline{F}$.

Let $P_{\sigma _{0}}$ be a face of the polytope $P^{k}$. It is an admissible polytope.  Since any point from $\partial P^{k}$ is simple, it follows that
$W_{\sigma_{0}}/T^k = \widehat{\mu}^{-1}(\stackrel{\circ}{P}_{\sigma _{0}})$ and, thus, the projection $\widehat{\mu}^{-1} : \widehat{\mu}^{-1}(\stackrel{\circ}{P}_{\sigma _{0}})\to \stackrel{\circ}{P}_{\sigma _{0}}$ is a fiber bundle with the  fiber $F_{\sigma _{0}}$. It implies that the space $\widehat{\mu}^{-1}(\stackrel{\circ}{P}_{\sigma _{0}})$ is homeomorphic to the direct product $\stackrel{\circ}{P}_{\sigma _{0}}\times F_{\sigma _{0}}$.  We also have that $F_{\sigma _{0}}\subseteq \overline{F}$ since $\widehat{\mu}^{-1}(\stackrel{\circ}{P^{k}})$ is an  everywhere dense set in $M^{2n}/T^k$ and $\overline{F}$ is a closed set. Combining this and   Proposition~\ref{paramet-bound}, we obtain
$F_{\sigma _{0}} \subseteq \cup_{\sigma \in \mathfrak{S}(\sigma_{0})} F_{\sigma}$. Therefore, the family of maps $\eta _{\sigma, \sigma_{0}} : F_{\sigma}\to F_{\sigma _{0}}$ produces  the map $\eta : \overline{F}\to F_{\sigma _{0}}$ that  is surjective. It implies that $\stackrel{\circ}{P}_{\sigma _{0}}\times F_{\sigma _{0}}$ is homeomorphic to a quotient of the space $\stackrel{\circ}{P}_{\sigma _{0}}\times \overline{F}$ by the equivalence  relation $(x, c_1)\approx (x, c_2)$ if and only if
$\eta (c_1) = \eta (c_2)$, that is  $\eta _{\sigma _{1}, \sigma _{0}}(c_1) = \eta _{\sigma _{2}, \sigma _{0}}(c_2)$, where $c_1\in F_{\sigma _{1}}, c_2\in F_{\sigma_{2}}$ and $\sigma _2, \sigma _2\in \mathfrak{S}$.
As the faces of $P^k$ are the only admissible polytopes from $\partial P^k$,  this proves the statement.
\end{proof}

\begin{thm}\label{join}
Assume that 
\begin{itemize}
\item [1)] all points from  the boundary  $\partial P^k$ are simple;
\item [2)]  a space $\widehat{\mu}^{-1}(x) \subset M^{2n}/T^k$ is  homeomorphic to the space $\bar{F}$  for any $x\in \stackrel{\circ}{P^{k}}$;
\item [3)]  the set $\stackrel{\circ}{P_{\mathfrak{S}}}$ can be divided into  subsets $\stackrel{\circ}{P_{\mathfrak{S}_{1}}}, \ldots , \stackrel{\circ}{P_{\mathfrak{S}_{l}}}$ such that the  areas
 from  $\stackrel{\circ}{P_{\mathfrak{S}_{i}}}$, $1\leq i\leq l$ give the polytopal decomposition for the area  $\stackrel{\circ}{P^k}$;
\item [4)]  $F_{\sigma }\cong  F_{\mathfrak{S}_{i}}$ for any $\sigma \in \mathfrak{S}_{i}$, where $1\leq i\leq l$.
\end{itemize}
If  $F_{\sigma _{0}}$ is a point for any $\sigma _{0}\in \mathfrak{S}$ such that $P_{\sigma _{0}}$ is a face of  the polytope $P^{k}$ then:
\[
M^{2n}/T^k \cong S^{k-1}\ast \bar{F} .
\]
\end{thm}
\begin{proof}
Recall that the joint $X\ast Y$ for  topological spaces $X$ and $Y$ is defined to be a quotient space of the product  $CX\times Y$ by the equivalence  relation $(x_1,0,y_1)\approx (x_2,0,y_2) \Leftrightarrow x_1=x_2$, where $CX$ is  a cone over $X$. In our case Theorem~\ref{main-main} gives that
\[
M^{2n}/T^k \cong P^{k} \times \overline{F}/\approx, \;\; (x,f_1)\approx (y,f_2) \Leftrightarrow x=y\in \partial P^{k}.
\]
We identify the polytope  $P^{k}$ with the closed disc $\overline{D^{k}}$ and further identify  $\overline{D^{k}}$  with the cone $CS^{k-1}$ over its boundary $S^{k-1}$. It implies that
\[
M^{2n}/T^k \cong CS^{k-1}\times \overline{F}/\approx,\;\; (x,0, f_1)\approx (x,0,f_2),
\]
which proves the statement.
\end{proof}
%The orbit space $M^{2n}/T^k$ can be partially described in terms of $CW(M^{2n}, P^k)$, $F_{\sigma}$ and $\eta _{\sigma ,\sigma ^{'}}$. Namely,
%\begin{itemize}
%\item  $CW(M^{2n}, P^{k})$ defines the gluing of $P_{\sigma}$;
%\item  for $P_{\sigma ^{'}}$ a face of $P_{\sigma}$ there is the function  $\eta _{\sigma, \sigma ^{'}} : F_{\sigma} \to F_{\sigma ^{'}}$.
%\end{itemize}

%This induces the gluing in  $M^{2n}/T^k = \cup P_{\sigma}\times F_{\sigma}$.  Put $\partial (P_{\sigma} \times F_{\sigma}) =
%\partial P_{\sigma}\times F_{\sigma}$.  Then it holds:

%\begin{thm}
%If $P_{\sigma ^{'}}$ is a face of $P_{\sigma}$ then for $x\in P_{\sigma ^{'}}$ we have that
%\[
%(x, c_{\sigma}) = (x, c_{\sigma ^{'}})\; \text{iff}\; \eta _{\sigma ,\sigma ^{'}} (c_{\sigma}) = c_{\sigma ^{'}}.
%\]
%\end{thm}

%%%%%%%%%%%%%%%%%   8

\section{Complex of  admissible polytopes}
Let $P_{\mathfrak{S}}$ denote  the  family of all admissible polytopes. Combining Example~\ref{adm_P}, Example~\ref{adm_v} and Proposition~\ref{face} we obtain:
\begin{itemize}
\item $P^{k}\in P_{\mathfrak{S}}$ and $v\in P_{\mathfrak{S}}$ for any vertex $v$.
\item  If $P_{\sigma} \in P_{\mathfrak{S}}$ and $P_{\bar{\sigma}}$ is a face of the polytope  $P_{\sigma}$, then  $P_{\bar{\sigma}} \in P_{\mathfrak{S}}$.
\end{itemize}

Let us consider the complex  $C(M^{2n}, P^k)$,  which is obtained a  formal disjoint union of  the interiors of all admissible polytopes:

\begin{equation}\label{complex}
 C(M^{2n}, P^k) = \cup _{P_{\sigma}\in P_{\mathfrak{S}}}\stackrel{\circ}{P_{\sigma}}.
\end{equation}

%In this way we obtained complex $C(M^{2n}, P^k)$: the polytopes of  $C(M^{2n}, P^k)$ are the interiors of the admissible polytopes from $P_{\mathfrak{S}}$ and they are glued by induction using the operator $d$.

By the defintion~\eqref{complex}, there is a bijection between the set of cells  in  $C(M^{2n}, P^k)$  and the  set of admissible polytopes $P_{\mathfrak{S}}$.

\begin{lem}
There is the canonical map $\widehat{\pi} : C(M^{2n}, P^k)\to P^k$.
\end{lem}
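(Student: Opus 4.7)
The plan is to construct $\widehat{\pi}$ by assembling the natural inclusions of the admissible polytopes into $P^k$. By construction, every admissible polytope $P_{\sigma} \in P_{\mathfrak{S}}$ is defined as $P_{\sigma} = \operatorname{convhull}(v_{i_1},\ldots,v_{i_l})$ where each $v_{i_j}$ is a vertex of $P^k$; hence $P_{\sigma}$ is already a compact convex subset of $P^k$, and the set-theoretic inclusion $i_{\sigma} : P_{\sigma} \hookrightarrow P^k$ is canonical and continuous. These inclusions are part of the structural data, so no choices are involved.

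Since the complex $C(M^{2n}, P^k) = \sqcup_{P_{\sigma}\in P_{\mathfrak{S}}} P_{\sigma}$ is, by the wording ``formal disjoint union'', the coproduct in the category of topological spaces, I will invoke its universal property: the family $\{i_{\sigma}\}_{P_{\sigma}\in P_{\mathfrak{S}}}$ of continuous maps into the common target $P^k$ determines a unique continuous map $\widehat{\pi} : C(M^{2n}, P^k) \to P^k$ whose restriction to each summand equals $i_{\sigma}$. Concretely, a point of $C(M^{2n}, P^k)$ is a labeled pair $(p, \sigma)$ with $p \in P_{\sigma}$, and I set $\widehat{\pi}(p, \sigma) := i_{\sigma}(p) = p$, now viewed unambiguously as a point of $P^k$. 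Continuity follows because each summand is open-and-closed in the disjoint union and $\widehat{\pi}$ restricted to it agrees with the continuous inclusion $i_{\sigma}$.

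There is essentially no obstacle here — the statement is a tautology once one observes that each admissible polytope already lives inside $P^k$ by its very definition via the map $s : \mathfrak{S}\to S(P^k)$. The only conceptual point worth flagging is the distinction between the \emph{formal} disjoint union (in which a point $p$ lying in several admissible polytopes $P_{\sigma_1}, P_{\sigma_2}$ has several distinct preimages under $\widehat{\pi}$, one for each admissible polytope containing it) and the image in $P^k$ (where these preimages collapse to the single point $p$). This makes $\widehat{\pi}$ generally \emph{not} injective, reflecting the existence of exceptional points as defined earlier in the paper.
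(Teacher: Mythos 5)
Your construction agrees with the paper's: both define $\widehat{\pi}$ by sending a point of the formal copy $P_{\sigma}'$ to the same point regarded inside $P^k$, using that $s(\sigma)=P_{\sigma}=\operatorname{convhull}(v_{i_1},\ldots,v_{i_l})$ already sits inside $P^k$. One small caveat: at the point where this lemma appears, the paper has not yet fixed a topology on $C(M^{2n},P^k)$ (the CW-topology and the quotient topology are introduced only in Sections 7.1--7.2), so the claim is really just about the existence of a canonical set map; your appeal to the coproduct topology and continuity is harmless extra structure but not what is being asserted, and indeed continuity with respect to the quotient topology requires a separate argument given later.
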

\begin{proof}
For  any  polytope $P_{\sigma}^{'}$ in $C(M^{2n}, P^k)$  there is a unique admissible polytope  $P_{\sigma}\in P_{\mathfrak{S}}$ such that $\stackrel{\circ}{P}_{\sigma}$  corresponds to the polytope $P_{\sigma}^{'}$ and  we define $\widehat{\pi}(P_{\sigma}^{'})=\stackrel{\circ}{P}_{\sigma}$.
\end{proof}
\begin{cor}\label{homeom2}
For any $P_{\sigma}\in P_{\mathfrak{S}}$ there exists  a unique  polytope $P_{\sigma}^{'}$ in $C(M^{2n}, P^k)$  such that  the map $\widehat{\pi} : P_{\sigma}^{'}\to \stackrel{\circ}{P}_{\sigma}$ is a homeomorphism.
\end{cor}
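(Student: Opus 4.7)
The plan is to unpack the construction of $C(M^{2n}, P^k)$ directly; the statement is essentially a tautology once the "formal disjoint union" in~\eqref{complex} is read carefully.

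First, I would record what the complex actually is. By definition~\eqref{complex}, $C(M^{2n}, P^k) = \sqcup_{P_{\sigma}\in P_{\mathfrak{S}}} P_{\sigma}$ is indexed by the set $P_{\mathfrak{S}}$ of admissible polytopes, and each summand appears exactly once. Thus there is a canonical bijection between the collection of polytopes comprising $C(M^{2n},P^k)$ and the set $P_{\mathfrak{S}}$; I will write $P_{\sigma}'$ for the summand corresponding to the admissible polytope $P_{\sigma}\in P_{\mathfrak{S}}$. The key point here is that even though two admissible polytopes may intersect inside $P^k$ (as warned in the earlier Remark), the summands $P_{\sigma}'$ and $P_{\tau}'$ are by construction disjoint copies in $C(M^{2n},P^k)$.

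Existence of the required homeomorphism then comes straight from the preceding Lemma. The canonical map $\widehat{\pi}$ was defined precisely by $\widehat{\pi}(P_{\sigma}') = P_{\sigma}$, sending the formal copy identically onto the actual admissible polytope inside $P^k$. Since $P_{\sigma}'$ is a formal copy of $P_{\sigma}$, the restriction $\widehat{\pi}\big|_{P_{\sigma}'}\colon P_{\sigma}'\to P_{\sigma}$ is the tautological identification, hence a homeomorphism.

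For uniqueness, suppose some other summand $P_{\tau}'\subset C(M^{2n},P^k)$ satisfies $\widehat{\pi}\big|_{P_{\tau}'}\colon P_{\tau}'\to P_{\sigma}$ is a homeomorphism. By the defining formula for $\widehat{\pi}$, this map carries $P_{\tau}'$ onto $P_{\tau}$, so $P_{\tau} = P_{\sigma}$ as admissible polytopes, and then the bijection between summands of $C(M^{2n},P^k)$ and elements of $P_{\mathfrak{S}}$ forces $P_{\tau}' = P_{\sigma}'$. There is no real obstacle in this argument; the only thing to guard against is conflating the abstract summand $P_{\sigma}'$ with its image $P_{\sigma}\subset P^k$, a distinction that becomes essential precisely because different admissible polytopes can share points in $P^k$ while remaining separate in the complex.
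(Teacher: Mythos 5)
Your proposal is correct and matches the paper's (implicit) reasoning: the corollary is stated without a separate proof precisely because, as you observe, it is the tautological unwinding of the formal-disjoint-union definition of $C(M^{2n},P^k)$ together with the defining formula $\widehat{\pi}(P_{\sigma}')=P_{\sigma}$ from the preceding lemma. Your care in distinguishing the abstract summand $P_{\sigma}'$ from its image $P_{\sigma}\subset P^k$ is exactly the right point to emphasize.
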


\begin{cor}\label{quot}
The canonical map $\widehat{\pi} : C(M^{2n}, P^k)\to P^k$ is a quotient map.
\end{cor}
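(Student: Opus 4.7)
The plan is to reduce the statement to the standard fact that a continuous surjection from a compact space to a Hausdorff space is automatically a quotient map (indeed a closed map). So the first step is to verify the three ingredients: continuity, surjectivity, and the compactness/Hausdorffness of source and target.

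First I would check that $\widehat{\pi}$ is continuous. The space $C(M^{2n},P^k)$ carries the disjoint union topology over the abstract copies $P_\sigma'$ of the admissible polytopes, so continuity reduces to continuity of each restriction $\widehat{\pi}|_{P_\sigma'}$. But by Corollary~\ref{homeom2} this restriction is the homeomorphism from $P_\sigma'$ onto the admissible polytope $P_\sigma \subset P^k$, composed with the inclusion $P_\sigma \hookrightarrow P^k$, both of which are continuous. Surjectivity is then immediate from Example~\ref{adm_P}: the polytope $P^k$ itself is admissible, and already its corresponding copy $(P^k)'$ in $C(M^{2n},P^k)$ maps onto $P^k$ under $\widehat{\pi}$.

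Next I would note that the family $P_{\mathfrak{S}}$ of admissible polytopes is finite. Indeed, by the corollary after Axiom~\ref{atlas} there are only finitely many fixed points $m$, hence the collection of admissible sets $\mathfrak{S}\subseteq 2^{[1,m]}$ is finite, and so is its image $P_{\mathfrak{S}}$ under $s$. Therefore $C(M^{2n},P^k)$ is a finite disjoint union of compact convex polytopes, and in particular compact. The target $P^k\subset \R^k$ is a convex polytope, hence Hausdorff.

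At this point the proof concludes by invoking the classical fact that a continuous surjection $f\colon X \to Y$ with $X$ compact and $Y$ Hausdorff is a closed map: the preimage of a closed set is closed, its image is compact (continuous image of compact), hence closed in $Y$ (compact in Hausdorff). A closed continuous surjection is automatically a quotient map. I do not foresee any real obstacle; the only point that could invite confusion is making explicit that the finiteness of $P_{\mathfrak{S}}$ is what buys compactness of the coproduct, since the definition of $C(M^{2n},P^k)$ as a formal disjoint union might otherwise look unwieldy.
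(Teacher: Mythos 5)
Your argument is correct, but it follows a genuinely different route from the paper's. The paper's own proof is essentially an assertion: it defines the equivalence relation $\rho$ by $x\rho y \Leftrightarrow \widehat{\pi}(x)=\widehat{\pi}(y)$, remarks that as a set $P^k = C(M^{2n},P^k)/\rho$, and declares it ``obvious'' that $\widehat{\pi}$ is therefore a quotient map. That sketch leaves implicit the nontrivial topological content, namely that the quotient topology inherited from $C(M^{2n},P^k)$ agrees with the standard Euclidean topology $P^k$ already carries. Your proof supplies exactly this missing piece by invoking the classical criterion: a continuous surjection from a compact space to a Hausdorff space is closed and hence a quotient map. Your verification of the three ingredients is sound — continuity componentwise via Corollary~\ref{homeom2}, surjectivity via Example~\ref{adm_P}, compactness of the source from the finiteness of $P_{\mathfrak{S}}$ (traced back to the finiteness of fixed points), and Hausdorffness of $P^k\subset\R^k$. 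One remark worth flagging: at this point in the text the paper has not yet pinned down a topology on $C(M^{2n},P^k)$ (the CW and quotient topologies appear only in \secref{topCW} and \secref{topQ}), so you are implicitly choosing the disjoint-union topology. That is the natural reading of ``formal disjoint union,'' and in fact your compact-to-Hausdorff argument goes through verbatim for any of the topologies the paper later considers, since $CW(M^{2n},P^k)$ and $CQ(M^{2n},P^k)$ are both compact and $\widehat{\pi}$ is continuous in each. So your proof is not merely correct but more robust and more complete than the paper's one-line justification.
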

\begin{proof}
Define an equivalence relation $\approx$ on $C(M^{2n}, P^k)$ by $x\approx y$ if and only if $\widehat{\pi} (x) =\widehat{\pi} (y)$. It is obvious that $P^{k} = C(M^{2n}, P^k)/\approx$ and that  $\widehat{\pi} : C(M^{2n}, P^k)\to P^k$ is a quotient map.
\end{proof}

\begin{cor}\label{can-bij}
The canonical map $\widehat{\pi} : C(M^{2n}, P^k)\to P^k$ is a bijection if and only if the only admissible polytopes are the whole $P^{k}$ and its faces.
\end{cor}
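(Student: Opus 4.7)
My plan is to prove the two directions of the equivalence separately. For the easy direction $\Leftarrow$, I will argue that if $P_{\mathfrak{S}}$ consists precisely of $P^k$ and its faces, then the complex $C(M^{2n},P^k)$, obtained as the formal disjoint union with its common sub-face identifications (implicit in Corollary~\ref{quot}), is just the polytopal complex of $P^k$. Topologically this is $P^k$ itself: by Corollary~\ref{homeom2} the top cell $P^k{}'$ maps homeomorphically to $P^k$, and all other cells (corresponding to faces) are absorbed into this copy via the identifications. The canonical map $\widehat{\pi}$ is then a bijection.

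For the harder direction $\Rightarrow$ I will argue the contrapositive: assuming some admissible polytope $P_\sigma$ is neither $P^k$ nor a face of $P^k$, I will exhibit a point $p\in P^k$ with two distinct preimages under $\widehat{\pi}$. First I would choose $p$ in the relative interior of $P_\sigma$; by Corollary~\ref{homeom2} the cell $P_\sigma'$ contains a unique preimage $x_\sigma$ of $p$. Next, since $p\in P^k$, it lies in the relative interior of a unique face $F$ of $P^k$ (possibly $F=P^k$), and $F$ is admissible by Proposition~\ref{face}, so the cell $F'\subset C$ contains a preimage $x_F$. Because $P_\sigma$ is not a face of $P^k$ we have $P_\sigma\neq F$, so $P_\sigma'\neq F'$. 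Both preimages lie in the relative interiors of their respective cells, where the polyhedral-complex identifications do not act; hence $x_\sigma\neq x_F$, and $\widehat{\pi}$ fails to be injective.

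The main obstacle will be making the claim $x_\sigma\neq x_F$ rigorous, which requires a precise interpretation of the ``formal disjoint union'' defining $C$. Specifically, one needs to know that cells of $C$ are identified only along their common admissible sub-faces and that no identifications occur in the relative interiors of cells. Under this interpretation—consistent with both Corollary~\ref{homeom2} and Corollary~\ref{quot}—the argument proceeds as outlined; surjectivity in both directions is immediate from the fact that $P^k\in P_{\mathfrak{S}}$ together with Corollary~\ref{homeom2} applied to the top cell.
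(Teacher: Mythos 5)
Your argument is correct, and it is essentially the argument the paper implicitly has in mind (the paper states the corollary without a proof, treating it as an immediate consequence of the preceding lemmas). Your $\Rightarrow$ direction is clean: pick $p\in \stackrel{\circ}{P_\sigma}$ for an admissible $P_\sigma$ that is not $P^k$ or one of its faces, note $p$ also lies in $\stackrel{\circ}{F}$ for the unique face $F$ of $P^k$ containing it, and observe that $P_\sigma'\neq F'$ gives two preimages of $p$; this works because open cells of distinct polytopes of $C(M^{2n},P^k)$ are disjoint.

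The one point worth tightening is the interpretational issue you flag, which is real: equation~\eqref{complex} literally says \emph{formal disjoint union}, and under a truly disjoint union (even of open cells, and certainly of closed ones) $\widehat{\pi}$ would never be injective, since a vertex $v$ of $P^k$ is itself an admissible polytope \emph{and} a face of $P^k$, giving two distinct cells $\{v\}'$ and $P^{k\,\prime}\supset v$ both mapping onto $v$. The resolution is the CW-structure the paper introduces in \secref{topCW}: cells $\stackrel{\circ}{P_\sigma}$ are glued by induction along admissible faces via the operator $d_C$, which is well-defined precisely because Proposition~\ref{face} guarantees faces of admissible polytopes are admissible. With that structure the underlying set of $C(M^{2n},P^k)$ is the disjoint union of the \emph{open} cells $\stackrel{\circ}{P_\sigma'}$, and injectivity of $\widehat{\pi}$ is exactly pairwise disjointness of the $\stackrel{\circ}{P_\sigma}$ inside $P^k$. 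Your $\Leftarrow$ direction would read more precisely as: since the relative interiors of $P^k$ and of its faces partition $P^k$, and these are all the admissible polytopes, the images of the open cells are pairwise disjoint and cover $P^k$, hence $\widehat{\pi}$ is a bijection. The phrase about cells being ``absorbed into'' the top cell is a little loose — the cell $\stackrel{\circ}{P^{k\,\prime}}$ covers only $\stackrel{\circ}{P^k}$, and the boundary is covered by the face cells — but the conclusion that $C(M^{2n},P^k)$ coincides with the polyhedral complex of $P^k$, hence with $P^k$ itself, is correct.
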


\begin{ex}
For a quasitoric manifold $M^{2n}$  the canonical map  $\widehat{\pi} : C(M^{2n}, P^n)\to P^n$
is a bijection.
\end{ex}

For the  almost moment map $\mu : M^{2n}\to P^{k}$ we prove the following result:

\begin{lem}\label{canonic}
There is the canonical map $f : M^{2n}\to C(M^{2n}, P^k)$ such that $\mu = \widehat{\pi} \circ f$.
\end{lem}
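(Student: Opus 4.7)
The plan is to define $f$ by combining the stratification of $M^{2n}$ into admissible spaces with the structure of $C(M^{2n},P^k)$ as a formal disjoint union of copies of the admissible polytopes. Recall that the earlier lemma gives $M^{2n}=\bigcup_{\sigma\in\mathfrak{S}}W_\sigma$ as a pairwise disjoint union, so every point $x\in M^{2n}$ belongs to a unique admissible stratum, which I denote $W_{\sigma(x)}$. This assignment $x\mapsto\sigma(x)$ is intrinsic to the $(2n,k)$-structure and involves no choices.

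For each $\sigma\in\mathfrak{S}$ the complex $C(M^{2n},P^k)$ contains, by its very construction~\eqref{complex}, a distinguished formal copy $P_\sigma^{'}$ of the admissible polytope $P_\sigma$, and by Corollary~\ref{homeom2} the restriction $\widehat{\pi}|_{P_\sigma^{'}}\colon P_\sigma^{'}\to P_\sigma$ is a homeomorphism. Let $j_\sigma\colon P_\sigma\to P_\sigma^{'}$ denote its inverse; it is the unique section of $\widehat{\pi}$ over $P_\sigma$. The first bullet of Axiom~\ref{fiber} gives $\mu(W_\sigma)\subseteq\stackrel{\circ}{P_\sigma}\subset P_\sigma$, so the formula
\[
f(x)=j_{\sigma(x)}\bigl(\mu(x)\bigr),\qquad x\in W_{\sigma(x)},
\]
defines a map $f\colon M^{2n}\to C(M^{2n},P^k)$ without ambiguity. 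Applying $\widehat{\pi}$ and using $\widehat{\pi}\circ j_{\sigma(x)}=\mathrm{id}_{P_{\sigma(x)}}$ gives $\widehat{\pi}\circ f=\mu$, which is the desired factorization.

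Finally I would record that $f$ deserves to be called canonical: the stratum $W_{\sigma(x)}$ is determined by $x$ alone, the section $j_\sigma$ is the unique inverse of a homeomorphism furnished by Corollary~\ref{homeom2}, and $\mu$ is part of the given data of the $(2n,k)$-manifold. The only point that needs care is choosing the \emph{correct} formal copy $P_\sigma^{'}$ into which to map a given $x$; a priori $\mu(x)$ could lie on the boundary of several admissible polytopes, and the factorization through $\widehat{\pi}$ would be ambiguous if $x$ itself sat in more than one stratum. This cannot happen, precisely because the strata are pairwise disjoint and because Axiom~\ref{fiber} forces $\mu(W_\sigma)$ into the open part $\stackrel{\circ}{P_\sigma}$ of $P_\sigma$ rather than onto any face. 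Beyond this bookkeeping I do not foresee any serious obstacle.
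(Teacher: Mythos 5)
Your proof is correct and follows essentially the same route as the paper: use the disjoint-union decomposition of $M^{2n}$ into strata to pick the unique $\sigma$ with $x\in W_\sigma$, use Axiom~\ref{fiber} to place $\mu(x)$ in $\stackrel{\circ}{P_\sigma}$, and use the homeomorphism of Corollary~\ref{homeom2} to lift $\mu(x)$ to the distinguished formal copy $P_\sigma^{'}$ in $C(M^{2n},P^k)$. Your explicit naming of the inverse section $j_\sigma$ and the closing discussion of why the choice of formal copy is unambiguous are nice expository additions but do not change the argument.
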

\begin{proof}
For any  point $x\in M^{2n}$ there exists a unique  stratum  $W_{\sigma}$ such that  $x\in W_{\sigma}$. Then  $\mu(x)\in \stackrel{\circ}{P_{\sigma}}$ and by Corollary~\ref{homeom2}  there exists  a unique polytope $P_{\sigma}^{'}\subseteq C(M^{2n}, P^k)$ such that $\widehat{\pi} :  P_{\sigma}^{'}\to \stackrel{\circ}{P_{\sigma}}$ is a homeomorphism. It  implies that there exists a unique $y\in P_{\sigma}^{'}$ such that $\widehat{\pi} (y) = \mu (x)$. In this way, the map $f : M^{2n}\to C(M^{2n}, P^k)$ is defined by $f(x)=y$.
\end{proof}

For the  induced map $\widehat{\mu}: M^{2n}/T^k\to P^{k}$ we obtain:
\begin{cor}
There is the canonical map $\widehat{f} : M^{2n}/T^k\to C(M^{2n}, P^k)$ such that $\widehat{\mu} = \widehat{\pi} \circ \widehat{f}$.
\end{cor}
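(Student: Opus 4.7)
The plan is to obtain $\widehat{f}$ by showing that the map $f:M^{2n}\to C(M^{2n},P^k)$ constructed in Lemma~\ref{canonic} is constant on every $T^k$-orbit, and hence descends uniquely to the quotient. The commutativity of the triangle then follows automatically from the commutativity $\mu=\widehat{\pi}\circ f$ already established.

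First, I would verify the $T^k$-invariance of $f$. Given $x\in M^{2n}$ and $t\in T^k$, both $x$ and $tx$ lie in the same stratum $W_{\sigma}$ because the admissible spaces are $T^k$-invariant (this was proved in the lemma immediately following the definition of $W_{\sigma}$). Since $\mu$ is $T^k$-equivariant with respect to the trivial action on $\R^k$, we have $\mu(tx)=\mu(x)$. Now $f$ was defined by selecting the unique preimage of $\mu(x)$ inside $\stackrel{\circ}{P_{\sigma}^{'}}$ under the homeomorphism $\widehat{\pi}:\stackrel{\circ}{P_{\sigma}^{'}}\to\stackrel{\circ}{P_{\sigma}}$ from Corollary~\ref{homeom2}. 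Since both the stratum index $\sigma$ and the value $\mu(x)$ coincide for $x$ and $tx$, the defining recipe produces the same point, i.e.\ $f(tx)=f(x)$.

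Consequently $f$ factors through the orbit projection $\pi_{T^k}:M^{2n}\to M^{2n}/T^k$, giving a well-defined set-theoretic map $\widehat{f}:M^{2n}/T^k\to C(M^{2n},P^k)$ with $\widehat{f}\circ\pi_{T^k}=f$. To see that $\widehat{f}$ is continuous, note that $\pi_{T^k}$ is a quotient map, so by the universal property of quotient topology, continuity of $\widehat{f}$ is equivalent to continuity of $\widehat{f}\circ\pi_{T^k}=f$, which is inherited from Lemma~\ref{canonic}.

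Finally, the commutative identity follows by a routine diagram chase: for any $[x]\in M^{2n}/T^k$,
\[
(\widehat{\pi}\circ\widehat{f})([x])=\widehat{\pi}(f(x))=\mu(x)=\widehat{\mu}([x]),
\]
where the middle equality is Lemma~\ref{canonic} and the last is the definition of $\widehat{\mu}$ as the map induced by $\mu$. No step here is delicate; the only thing requiring care is confirming that $f$ does not distinguish points in a single orbit, which is guaranteed by the stratum-wise construction together with $T^k$-invariance of each $W_{\sigma}$ and of $\mu$.
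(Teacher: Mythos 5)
Your descent argument is exactly the right idea and essentially what the paper has in mind: $f$ is constant on orbits because each stratum $W_\sigma$ is $T^k$-invariant and $\mu$ is $T^k$-invariant, so $x$ and $tx$ land in the same $\stackrel{\circ}{P_\sigma^{'}}$ with the same $\widehat{\pi}$-value, forcing $f(tx)=f(x)$; the commutativity identity is then a trivial diagram chase. That part is correct and complete.

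One caveat on your continuity paragraph: you write that continuity of $f$ is ``inherited from Lemma~\ref{canonic},'' but that lemma only constructs $f$ set-theoretically and proves nothing about continuity. At the point where this corollary appears, no topology has yet been placed on $C(M^{2n},P^k)$, and in fact the paper later shows (Proposition~\ref{canon-ncont}) that $f$ is \emph{not} continuous for the CW-topology whenever there is a full-dimensional admissible polytope other than $P^k$; continuity of $f$, and hence of $\widehat{f}$ via the universal property of the quotient $M^{2n}\to M^{2n}/T^k$ as you correctly argue, holds only once the CQ-topology on $C(M^{2n},P^k)$ is introduced (it is defined precisely so that $f$ is a quotient map). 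Since the corollary as stated asserts only the existence of the map and the factorization $\widehat{\mu}=\widehat{\pi}\circ\widehat{f}$, this does not undermine your proof of the claim; just drop the attribution to Lemma~\ref{canonic} and, if you want continuity, say explicitly that it refers to the CQ-topology.
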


We  further assume that $P_{\mathfrak{S}}$ is  partially ordered by the inclusion of admissible polytopes. It is  defined the boundary operator $d$ which to each admissible polytope $P_{\sigma}$ assigns the disjoint union of of all its  faces. The operator $d$ induces the operator $d_{C}$ on the cells of the complex $C(M^{2n}, P^{k})$ by 
\[
d_{C}(P_{\sigma}^{'} )  = (dP_{\sigma})_{C},
\]
where $(dP_{\sigma})_{C}\subset C(M^{2n}, P^{k})$  corresponds  to $dP_{\sigma}$ by the  bijection between $P_{\mathfrak{S}}$ and  $C(M^{2n}, P^{k})$.

It is important to emphasize the following. One can try to ask if it is possible  to define similarly   the boundary operator $\tilde{d}$ on the set of all strata $W_{\mathfrak{S}}$  by    $\tilde {d}(W_{\sigma}) = \overline{W_{\sigma}} \setminus W_{\sigma}$. The answer is negative in general. Namely, as it will be shown in Section~\ref{Grassmann}, the Grassmann manifold $G_{7,3}$  is an example of a manifold that  belongs to our class,  but which contains  a stratum  $W_{\sigma}$ whose boundary is not the union of the strata. Therefore, the operator $\tilde{d}$ is  not  defined for $G_{7,3}$. This example is taken from~\cite{GS}.

We still want to note that there are important examples of $(2n,k)$-manifolds  for which the operator $\tilde{d}$ is well defined. It is proved in the paper~\cite{GS}  that the operator $\tilde{d}$ is well  defined  for the manifolds $G_{k+1,2}$ and $G_{6,3}$. In the paper~\cite{BT-1}, the cases of Grassmann manifold $G_{4,2}$ and   complex projective space $\C P^5$ with the canonical action of $T^4$  are studied in detail  . In these cases the operator $\tilde{d}$  is defined on  the complex of the  strata $W_{\mathfrak{S}}$.  Moreover, the canonical map $f : M^{2n}\to C(M^{2n}, P^k)$  gives a  mapping  from  the complex of strata  $W_{\mathfrak{S}}$ to  the  complex $C(M^{2n}, P^k)$.
More precisely:
\begin{lem}
If the operator $\tilde {d}(W_{\sigma}) = \overline{W_{\sigma}} - W_{\sigma}$ is defined on the set of strata  $W_{\mathfrak{S}}$ then the  map $f : M^{2n} \to C(M^{2n}, P^k)$ induces the map $f_{\mathfrak{S}} :  W_{\mathfrak{S}} \to C(M^{2n}, P^{k})$ which  is a bijective map  between these  complexes and commutes with the  boundary operators.
\end{lem}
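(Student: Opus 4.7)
Plan. The map $f_{\mathfrak{S}}$ will be defined by the assignment $W_\sigma \mapsto P_\sigma'$, where $P_\sigma' \subset C(M^{2n},P^k)$ is the unique polytope associated, via Corollary~\ref{homeom2}, to the admissible polytope $P_\sigma = s(\sigma)$. This is consistent with the canonical map $f$ of Lemma~\ref{canonic}: Axiom~\ref{fiber} gives $\mu(W_\sigma) = \stackrel{\circ}{P_\sigma}$, and then the construction of $f$ in the proof of Lemma~\ref{canonic} yields $f(W_\sigma) = \stackrel{\circ}{P_\sigma'}$, so $P_\sigma'$ is uniquely determined by $W_\sigma$.

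Surjectivity is immediate: by~\eqref{complex}, each polytope of $C(M^{2n},P^k)$ is a formal copy of some admissible $P_\sigma$, and by definition of admissibility there exists $\sigma \in \mathfrak{S}$ with $s(\sigma) = P_\sigma$; the stratum $W_\sigma$ then maps to that polytope.

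The heart of the injectivity claim is that the map $s:\mathfrak{S}\to P_{\mathfrak{S}}$ is itself injective. Here I would exploit Axiom~\ref{bij}: each $v_i = \mu(x_i)$ is a vertex of $P^k$, hence an extreme point of $P^k$. An extreme point of $P^k$ is automatically extreme in every convex subset of $P^k$ that contains it; therefore, for each $i \in \sigma$, the point $v_i$ is extreme in $P_\sigma = \mathrm{convhull}\{v_j : j \in \sigma\}$. Since the vertices of a convex hull lie among its generators, the vertex set of $P_\sigma$ equals exactly $\{v_i : i \in \sigma\}$. Different admissible sets therefore produce admissible polytopes with different vertex sets, and so $f_{\mathfrak{S}}$ is injective.

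To justify calling this a bijection \emph{of complexes}, I would finally verify that $f_{\mathfrak{S}}$ intertwines $\tilde{d}$ with $d_C$. This is where the hypothesis that $\tilde{d}$ is defined on $W_{\mathfrak{S}}$ enters: writing $\tilde{d}(W_\sigma) = \bigcup W_{\bar{\sigma}}$ as a union of strata, Corollary~\ref{closure} gives $\mu(W_{\bar{\sigma}}) = \stackrel{\circ}{P_{\bar{\sigma}}} \subset \partial P_\sigma$, and Corollary~\ref{all} together with Proposition~\ref{face} identifies the resulting admissible polytopes $P_{\bar{\sigma}}$ with precisely the faces of $P_\sigma$. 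I expect this last step---showing that under the $\tilde{d}$ hypothesis the strata appearing in $\overline{W_\sigma}\setminus W_\sigma$ are exactly the $f_{\mathfrak{S}}$-preimages of the faces of $P_\sigma$, with no extraneous admissible polytopes intruding---to be the main technical obstacle, since examples such as $G_{7,3}$ show that without this hypothesis the correspondence between boundary strata and boundary faces breaks down.
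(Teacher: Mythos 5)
Your proposal follows essentially the same approach as the paper: define $f_{\mathfrak{S}}(W_\sigma)=P_\sigma'$ via the canonical map $f$ and Corollary~\ref{homeom2}, note the indexing of both $W_{\mathfrak{S}}$ and $P_{\mathfrak{S}}$ by $\mathfrak{S}$, and invoke Corollary~\ref{all} for compatibility with $\tilde d$ and $d_C$. Your explicit argument that $s:\mathfrak{S}\to P_{\mathfrak{S}}$ is injective (vertices of $P^k$ remain extreme in every convex subset that contains them, so the vertex set of $P_\sigma$ recovers $\sigma$) is a welcome clarification of a step the paper leaves implicit, and your flag on the ``no extraneous strata'' point at the end is an honest reading of a step the paper also asserts rather than proves; the only slip is a minor mis-citation (Axiom~\ref{fiber}, not Corollary~\ref{closure}, gives $\mu(W_{\bar\sigma})=\stackrel{\circ}{P_{\bar\sigma}}$).
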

\begin{proof}
Let $W_{\sigma}$ be a stratum. Then  $\mu (W_{\sigma}) = \stackrel{\circ}{P_{\sigma}}$ and Lemma~\ref{canonic} implies that $\widehat{\pi}(f(W_{\sigma})) =  \stackrel{\circ}{P_{\sigma}}$. By Corollary~\ref{homeom} we conclude that $f(W_{\sigma}) = P_{\sigma}^{'}$,  which is a cell of the complex $C(M^{2n}, P^k)$. Therefore, the map $f_{\mathfrak{S}} :  W_{\mathfrak{S}} \to C(M^{2n}, P^{k})$ is defined  by  $f_{\mathfrak{S}}(W_{\sigma}) = f(W_{\sigma})$.  The map $f_{\mathfrak{S}}$ is a bijection because of the above stated  bijection between the set of strata   $W_{\mathfrak{S}}$ and the set of admissible polytopes $P_{\mathfrak{S}}$. Moreover, it follows from Corollary~\ref{all} that $d\circ f_{\mathfrak{S}} = f_{\mathfrak{S}}\circ \tilde{d}$  which means that $f_{\mathfrak{S}}$ commutes with the boundary operators.
\end{proof}

%%%%%%%%%%%%%%%   8.1

\subsection{CW-topology on $C(M^{2n},P^k)$}\label{topCW}

The complex of all  admissible polytopes  $C(M^{2n},P^k)$  can be  naturally  endowed  with a   topology  such that $C(M^{2n},P^k)$ becomes CW-complex,  which we denote by $CW(M^{2n},P^k)$.
\begin{itemize}
\item The cells of these complex are the open polytopes $\stackrel{\circ}{P_{\sigma}}$ for $P_{\sigma}\in P_{\mathfrak{S}}$.
\item   The characteristic function on the boundary of the cells, which defines their attaching, is defined by the operator $d_{C}$. The skeletons  are defined inductively by the dimension of the cells.  The definition of the  operator $d_{C}$ verifies  that the cell axiom of CW-complex is satisfied.
\item According to the axioms of CW-complex,  it is defined on  $CW(M^{2n}, P^{k})$  the weak topology compatible  with  the cell decomposition.
\end{itemize}

Then the following is satisfied:
\begin{lem}
The canonical map $\widehat{\pi} : CW(M^{2n}, P^k)\to P^k$
\begin{itemize}
\item  is  a continuous map;
\item   is a  cell map  for the standard cell decomposition of the polytope $P^{k}$ if and only if $\hat{\pi}$  is a homeomorphism.
\end{itemize}
\end{lem}
\begin{proof}
To prove that the map $\widehat{\pi}$ is  continuous,  it is enough to notice that if $U\subseteq P^{k}$ is a closed set in $P^{k}$ then  $U\cap P$ is a closed set in $P$, for any polytope $P$ over some subsets of vertices of the polytope  $P^k$. This will be  true as well  for the  admissible polytopes, which  implies that $\widehat{\pi}^{-1}(U)$ is a closed set in $CW(M^{2n}, P^k)$.
The second statement  follows directly from Corollary~\ref{can-bij}.
\end{proof}

\begin{ex}
For a quasitoric manifold the canonical map $\widehat{\pi} : CW(M^{2n}, P^n)\to P^n$  is  a homeomorphism.
\end{ex}

\begin{prop}\label{canon-ncont}
If the set of admissible polytopes $P_{\mathfrak{S}}$ contains a   $k$-dimensional polytope different from $P^{k}$, the canonical map
$f : M^{2n}\to CW (M^{2n}, P^k)$ is not a continuous map.
\end{prop}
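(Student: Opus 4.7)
The strategy is to exhibit an open set of $CW(M^{2n},P^k)$ whose preimage under $f$ fails to be open in $M^{2n}$. The candidate is the relative interior $\stackrel{\circ}{P_{\sigma}^{'}}$ of the $k$-dimensional cell $P_{\sigma}^{'}\subset C(M^{2n},P^k)$ corresponding to the hypothesized admissible polytope $P_{\sigma}\neq P^k$ with $\dim P_{\sigma}=k$.

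First I would verify that $\stackrel{\circ}{P_{\sigma}^{'}}$ is open in $CW(M^{2n},P^k)$. By the weak topology coming from the cell decomposition, this requires that $\stackrel{\circ}{P_{\sigma}^{'}}\cap \overline{P_{\tau}^{'}}$ be open in $\overline{P_{\tau}^{'}}$ for every $P_{\tau}\in P_{\mathfrak{S}}$. Since the boundary operator $d_{C}$ only glues $P_{\tau}^{'}$ along cells coming from the strict faces of $P_{\tau}$, the closed cell $\overline{P_{\tau}^{'}}$ consists of $P_{\tau}^{'}$ together with cells of strictly smaller dimension than $\dim P_{\tau}\le k$. Because $\dim P_{\sigma}=k$ is maximal, the cell $\stackrel{\circ}{P_{\sigma}^{'}}$ cannot lie in the boundary of any other cell. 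The intersection is therefore $\stackrel{\circ}{P_{\sigma}^{'}}$ itself when $P_{\tau}=P_{\sigma}$ (which is open in the closed polytope $\overline{P_{\sigma}^{'}}$) and is empty otherwise; either way it is open.

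Next, using the construction of $f$ from Lemma~\ref{canonic}, I would identify
\[
f^{-1}(\stackrel{\circ}{P_{\sigma}^{'}})=\bigcup\{W_{\tau}:\tau\in \mathfrak{S},\ P_{\tau}=P_{\sigma}\},
\]
since $f$ sends each stratum $W_{\tau}$ into the unique cell $P_{\tau}^{'}\subset C(M^{2n},P^k)$ associated to the polytope $P_{\tau}$; the right hand side is nonempty because it contains $W_{\sigma}$. To see that this preimage has empty interior, observe that by Example~\ref{adm_P} together with Axiom~\ref{bij}, the index set $[1,m]$ is the only admissible set whose image under $s$ is $P^k$, so $P_{\tau}=P_{\sigma}\neq P^k$ forces $\tau\neq [1,m]$ for every summand above. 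Thus each such $W_{\tau}$ is disjoint from the main stratum $W=W_{[1,m]}$, so $f^{-1}(\stackrel{\circ}{P_{\sigma}^{'}})\subseteq M^{2n}\setminus W$. By Example~\ref{main}, $W$ is dense in $M^{2n}$, hence its complement has empty interior; therefore $f^{-1}(\stackrel{\circ}{P_{\sigma}^{'}})$ is a nonempty set with empty interior and in particular not open. This contradicts continuity of $f$.

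The subtlety worth flagging is the possibility that distinct admissible sets can share the same admissible polytope, which forces $f^{-1}(\stackrel{\circ}{P_{\sigma}^{'}})$ to be a union of strata rather than a single one. Rather than invoking Lemma~\ref{open} separately for each summand, using the density of the main stratum $W$ handles the whole union uniformly.
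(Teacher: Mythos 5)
Your proof is correct and follows the same strategy as the paper: exhibit the open cell $\stackrel{\circ}{P_{\sigma}^{'}}$ and show that its preimage under $f$ is not open. The paper's own argument is shorter: it identifies $f^{-1}(\stackrel{\circ}{P_{\sigma}^{'}})$ with the single stratum $W_{\sigma}$ and invokes Lemma~\ref{open} directly. You were worried that distinct admissible sets might map to the same admissible polytope, which would make the preimage a union of strata; you therefore replace Lemma~\ref{open} with a density argument for the whole union (correctly, since a union of non-open sets need not be non-open). In fact, that worry never materializes: if $\sigma_1 \neq \sigma_2$ are admissible sets, say $i \in \sigma_1 \setminus \sigma_2$, then $v_i \notin P_{\sigma_2}$ because a vertex of $P^k$ is an extreme point and cannot be a convex combination of the other vertices indexed by $\sigma_2$; hence $P_{\sigma_1} \neq P_{\sigma_2}$ and the assignment $\sigma \mapsto P_{\sigma}$ is injective. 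With that observation, the preimage collapses to the single stratum $W_{\sigma}$ and the paper's appeal to Lemma~\ref{open} suffices. Your version is nevertheless a sound and slightly more self-contained reworking, and the added verification that $\stackrel{\circ}{P_{\sigma}^{'}}$ is genuinely open in the weak topology (because $P_\sigma$ has top dimension $k$ and so does not appear in any $d_C$-boundary) fills in a point the paper leaves tacit.
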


\begin{proof}
Let $P_{\sigma} \in  P_{\mathfrak{S}}$, $\dim P_{\sigma} =k$ and $P_{\sigma}\neq P^k$.  Then $P_{\sigma}^{'} \cong \stackrel{\circ}{P_{\sigma}}$ is an open set in $CW (M^{2n}, P^k)$.  Let us consider the corresponding stratum  $W_{\sigma}$, that is $\mu (W_{\sigma}) =\stackrel{\circ}{ P_{\sigma}}$. Since $\mu = \widehat{\pi}\circ f$, it follows that $f^{-1}(P_{\sigma}^{'}) = W_{\sigma}$, which is, by Remark~\ref{open},   not an open set in $M^{2n}$. 
\end{proof}

\begin{rem}
Note that  the same assumption as in  Proposition~\ref{canon-ncont} leads that  the canonical map  $\widehat{f} : M^{2n}/T^k\to CW (M^{2n}, P^k)$ is not continuous as well.
\end{rem}

\begin{ex}\label{G42}
We demonstrate Proposition~\ref{canon-ncont} in the case of  Grassmann manifold $G_{4,2}$ endowed with the canonical action of $T^4$.
Following~\cite{BT-1}, any four sided pyramid $P$  in $\Delta _{4,2}$ is an admissible polytope, which implies that $P^{'}\cong \stackrel{\circ}{P}$ is an open set in $CW(G_{4,2}, \Delta _{4,2})$. On the other hand $f^{-1}(P^{'})$ is an $(\C ^{*})^3$-orbit in the eight-dimensional manifold $G_{4,2}$, so it can not be an open set.
\end{ex}

%%%%%%%%%%%%%%%%   8.2

\subsection{Quotient topology on $C(M^{2n},P^k)$}\label{topQ}
As  Proposition~\ref{canon-ncont} points, the CW-topology on  $C(M^{2n},P^k)$ is not compatible  with  the topology of $M^{2n}$.  Therefore,  the CW-topology  is not quite appropriate for the description of  a topology of the orbit space $M^{2n}/T^k$.

 We define  another topology on  the complex $C(M^{2n},P^k)$    such that the space $CQ(M^{2n},P^k)= C(M^{2n},P^k)$  becomes a   quotient space of $M^{2n}$ by the canonical map $f : M^{2n}\to C(M^{2n}, P^k)$.  More precisely,  we consider a  subset $U\subseteq C(M^{2n},P^k)$  to be  open if and only if the subset $f^{-1} (U)\subseteq M^{2n}$ is open. This is equivalent to say that  $CQ(M^{2n},P^k)$  is a  quotient space of $M^{2n}/T^k$ by the canonical 
map $\widehat{f} : M^{2n}/T^k\to C(M^{2n}, P^k)$.

\begin{lem}
The maps $\widehat{\pi} : CQ(M^{2n}, P^{k})\to P^{k}$,  $f: M^{2n}\to CQ(M^{2n}, P^{k})$ and  $\widehat{f} : M^{2n}/T^k \to CQ(M^{2n}, P^k)$ are continuous, canonical  maps.
\end{lem}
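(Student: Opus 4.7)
The plan is to unpack the definition of $CQ(M^{2n},P^k)$ and apply the universal property of the quotient topology to each of the three maps in turn.

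For $f$ itself there is nothing to do: by construction $CQ(M^{2n},P^k)$ carries the finest topology making $f$ continuous, so $f$ is continuous tautologically.

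For $\widehat{\pi}$, the plan is to invoke the factorization $\mu=\widehat{\pi}\circ f$ established in Lemma~\ref{canonic}. Given an open set $V\subseteq P^k$, I would compute
\[
f^{-1}(\widehat{\pi}^{-1}(V))=(\widehat{\pi}\circ f)^{-1}(V)=\mu^{-1}(V),
\]
which is open in $M^{2n}$ because $\mu$ is continuous (indeed smooth, by the standing hypotheses of Section~\ref{jedan}). Hence $\widehat{\pi}^{-1}(V)$ is open in $CQ(M^{2n},P^k)$ by the defining property of the quotient topology, so $\widehat{\pi}$ is continuous.

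For $\widehat{f}$, the plan is to use the factorization $f=\widehat{f}\circ \pi$, where $\pi\colon M^{2n}\to M^{2n}/T^k$ is the orbit projection. This factorization is legitimate because each stratum $W_{\sigma}$ is $T^k$-invariant and $f$ sends $W_{\sigma}$ into the single interior cell $\stackrel{\circ}{P_\sigma^{'}}$, so $f$ is constant on $T^k$-orbits. For an open $U\subseteq CQ(M^{2n},P^k)$ one then has
\[
\pi^{-1}(\widehat{f}^{-1}(U))=f^{-1}(U),
\]
which is open in $M^{2n}$ by continuity of $f$. Since $\pi$ is itself a quotient map, $\widehat{f}^{-1}(U)$ is open in $M^{2n}/T^k$, proving continuity of $\widehat{f}$. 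This argument also confirms the remark preceding the lemma, namely that $CQ(M^{2n},P^k)$ is equivalently the quotient of $M^{2n}/T^k$ by $\widehat{f}$.

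The main (and essentially only) obstacle is a bookkeeping check: once one verifies that $f$ genuinely factors through the orbit projection $\pi$ and recalls $\mu = \widehat{\pi}\circ f$, the three continuity statements reduce to a single application of the universal property of the quotient topology. No further topological subtlety arises.
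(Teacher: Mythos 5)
Your proposal is correct and follows exactly the route the paper takes implicitly. The paper offers no written proof — it prefaces the lemma with ``we immediately have'' because the topology on $CQ(M^{2n},P^k)$ was \emph{defined} to make $f$ a quotient map — and your three short paragraphs simply spell out that immediacy: $f$ is continuous tautologically, $\widehat{\pi}$ is continuous via the factorization $\mu=\widehat{\pi}\circ f$ and the universal property, and $\widehat{f}$ is continuous because $f$ is $T^k$-invariant (strata are $T^k$-invariant and $\mu$ is $T^k$-invariant, so $f$ is constant on orbits) hence factors through the orbit projection, which is itself a quotient map. Your closing observation that this also justifies the paper's remark that $CQ(M^{2n},P^k)$ is equivalently the quotient of $M^{2n}/T^k$ by $\widehat{f}$ is a nice extra.
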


\begin{itemize}
\item Axiom~\ref{leaf} implies that a  face  of any polytope  $P^{'}\in CQ(M^{2n}, P^{k})$ is contained in the  boundary of $P^{'}$ regarded  to the quotient  topology.
\item Note also that $P^{k^{'}} = \stackrel{\circ}{P^{k}}$ is a dense set in  $CQ(M^{2n}, P^k)$ since $f^{-1}( P^{k^{'}}) = W$,   the main stratum  which is  a   dense set in $M^{2n}$.
\end{itemize}

%Recall that a topological space $X$ is said to be $T_{0}$-topological space if for any two points in $X$ there exists an open set %containing one of these two points but not containing the other, while $X$ is said to be $T_{1}$-topological space  if for any two %points    $x,y\in  X$ there is an open set containing $x$ but not $y$ and there is an open set containing $y$ but not $x$. A %topological space  $X$ is said to be Hausdorff if for any two points in $X$ there are an open disjoint sets that  contain these %points.

%\begin{lem}\label{T0}
%The space $CQ(M^{2n}, P^k)$ is $T_{0}$-topological space.
%\end{lem}

\begin{lem}\label{HF}
If the set of admissible polytopes consists of $P^{k}$ and its faces, that is if the canonical map map $\widehat{\pi} : C(M^{2n}, P^{k})\to P^{k}$ is a bijection, then the space  $CQ(M^{2n}, P^k)$ is a Hausdorff topological space.
\end{lem}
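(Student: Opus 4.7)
The plan is to exploit the fact that, under the bijectivity hypothesis, $\widehat{\pi}:CQ(M^{2n},P^k)\to P^k$ is already a continuous injection into a Hausdorff space, and Hausdorffness pulls back along any such map.

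First I would recall the setup: by the immediately preceding lemma the canonical map $\widehat{\pi}:C(M^{2n},P^k)\to P^k$ is continuous when $C(M^{2n},P^k)$ is given the quotient topology from $f:M^{2n}\to C(M^{2n},P^k)$, i.e.\ the topology of $CQ(M^{2n},P^k)$. The hypothesis of the lemma (via Corollary~\ref{can-bij}) says that the only admissible polytopes are $P^k$ and its faces, which is exactly the condition making $\widehat{\pi}$ a bijection, hence in particular injective.

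The main step is then a one-line topological argument. Let $x,y\in CQ(M^{2n},P^k)$ with $x\neq y$. By injectivity, $\widehat{\pi}(x)\neq \widehat{\pi}(y)$ in $P^k$. Since $P^k\subset \R^k$ is Hausdorff, there exist disjoint open sets $U,V\subset P^k$ with $\widehat{\pi}(x)\in U$ and $\widehat{\pi}(y)\in V$. By continuity of $\widehat{\pi}$, the preimages $\widehat{\pi}^{-1}(U)$ and $\widehat{\pi}^{-1}(V)$ are open in $CQ(M^{2n},P^k)$, and they are disjoint because $U\cap V=\emptyset$. They contain $x$ and $y$ respectively, which yields the Hausdorff separation.

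There is essentially no obstacle: the only thing to verify carefully is that the quotient topology on $CQ(M^{2n},P^k)$ is the one making $\widehat{\pi}$ continuous, and that the bijectivity in Corollary~\ref{can-bij} really does give injectivity of $\widehat{\pi}$ at the pointwise level (not just on cells). Both are immediate from the definitions already set up in Section~\ref{topQ}. One could alternatively strengthen the conclusion by noting that $M^{2n}$ compact implies $CQ(M^{2n},P^k)$ compact (as $f$ is continuous and surjective by its very definition in Lemma~\ref{canonic}), so the continuous bijection $\widehat{\pi}$ from a compact space to the Hausdorff space $P^k$ is actually a homeomorphism; but this refinement is not needed for the stated lemma.
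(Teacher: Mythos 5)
Your proof is correct, and the argument (a continuous injection into a Hausdorff space has Hausdorff domain) is the natural one. The paper in fact gives no explicit proof of Lemma~\ref{HF}, apparently treating it as immediate, so there is nothing to compare against; your write-up supplies exactly the missing routine argument. One tiny slip of citation: the continuity of $\widehat{\pi}$ on $CQ(M^{2n},P^k)$ comes from the lemma earlier in \secref{topQ} (the one asserting continuity of $\widehat{\pi}$, $f$, $\widehat{f}$), not from the ``immediately preceding lemma,'' which is Lemma~\ref{T0} about the $T_0$ property; this does not affect the validity of the proof.
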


\begin{lem}\label{T1}
If the set of admissible polytopes $P_{\mathfrak{S}}$ contains  a  polytope $P_{\sigma}$ such that $\stackrel{\circ}{P}_{\sigma}\subset \stackrel{\circ}{P^{k}}$, then the space
$CQ(M^{2n}, P^k)$ is not a Hausdorff topological space.
\end{lem}
\begin{proof}
Let $P_{\sigma}$ be   an  admissible polytope as stated in the formulation.  Then $\widehat{\pi}(P_{\sigma}^{'})\subset \stackrel{\circ}{P^{k}}$ and  for any point $x\in P_{\sigma}^{'}\subset CQ(M^{2n}, P^{k})$ there exists a point  $y\in P^{k^{'}}\subset CQ(M^{2n}, P^k)$   such that $\widehat{\pi}(x)=\widehat{\pi}(y)$.  Let further $V$ be an open set in $ CQ(M^{2n}, P^k)$ containing the point $x$. Then $ \widehat{f}^{-1}(V)$ is an open set in $M^{2n}/T^k$ and it contains  all points from $W_{\sigma}/T^k$  which   map to $\widehat{\pi}(x)$ by the map $\widehat{\mu}$.  On the other hand, by~\eqref{param-cl} there exists a  point $m\in \widehat{\mu}^{-1}(\widehat{\pi} (x))\cap W/T^k$ such that $m\in  \widehat{f}^{-1}(V)$. Note that $\widehat{f}(\widehat{\mu}^{-1}(\widehat{\pi}(x))\cap W/T^k) = y$ which  implies that $y\in V$. Thus every neighborhood of the point $x$ in $CQ(M^{2n}, P^k)$ contains a point $y\in P^{k^{'}}$.
\end{proof}

%\begin{cor}
%If the set of admissible polytopes $P_{\mathfrak{S}}$ contains   a polytope  different from $P^{k}$ and its faces, then the space
%$CQ(M^{2n}, P^k)$  is not a  Hausdorff  topological space.
%\end{cor}

\begin{cor}
If the set of admissible polytopes $P_{\mathfrak{S}}$ is a pure set and contains  a  polytope different from $P^{k}$ and its faces then the space
$CQ(M^{2n}, P^k)$ is not a Hausdorff topological space.
\end{cor}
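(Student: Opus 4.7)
The plan is to derive the corollary directly from Lemma~\ref{T1} by invoking the standard hierarchy of separation axioms. Recall that any Hausdorff space is automatically $T_1$: given two distinct points, the disjoint open sets separating them provide, in particular, an open neighborhood of each that excludes the other. Hence non-$T_1$ implies non-Hausdorff, and the corollary follows at once from Lemma~\ref{T1} under the hypothesis that $P_{\mathfrak{S}}$ contains a polytope other than $P^{k}$ and its faces.

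If one wishes to make the failure of the Hausdorff property explicit (rather than merely quoted from the $T_1$ statement), I would reproduce the construction from the proof of Lemma~\ref{T1}: pick an admissible polytope $P_{\sigma}$ which is neither $P^{k}$ nor a face of $P^{k}$, select $x\in \stackrel{\circ}{P_{\sigma}}\subset CQ(M^{2n},P^k)$, and use the fact that $\widehat{\pi}(\stackrel{\circ}{P_{\sigma}^{\prime}})\subset \stackrel{\circ}{P^{k}}$ to produce a companion point $y\in \stackrel{\circ}{P^{k}}\subset CQ(M^{2n},P^k)$ with $\widehat{\pi}(x)=\widehat{\pi}(y)$ but $x\neq y$. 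The argument in Lemma~\ref{T1}, which exploits relation~\eqref{param-cl} together with the density of $W/T^k$ in $M^{2n}/T^k$, shows that every open neighborhood $V$ of $x$ in $CQ(M^{2n},P^k)$ must contain $y$. Consequently no open set in $CQ(M^{2n},P^k)$ separates $x$ from $y$, so certainly no pair of disjoint open neighborhoods of $x$ and $y$ exists, and the space fails to be Hausdorff.

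There is no real obstacle: the corollary is a one-line consequence of Lemma~\ref{T1} once the implication Hausdorff $\Rightarrow T_1$ is recalled. The only subtle ingredient, already absorbed in the proof of Lemma~\ref{T1}, is the use of equation~\eqref{param-cl} to exhibit the point $y$ in every neighborhood of $x$; this reflects the non-trivial way in which the quotient topology on $CQ(M^{2n},P^k)$ glues the interior of a non-facial admissible polytope into the interior of $P^k$.
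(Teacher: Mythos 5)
Your proof is correct and is precisely the argument the paper intends: the corollary is stated immediately after Lemma~\ref{T1} with the connective ``It implies,'' relying exactly on the standard implication Hausdorff $\Rightarrow T_1$. Your optional unpacking of the Lemma~\ref{T1} construction is consistent with the paper's proof of that lemma and adds nothing incorrect.
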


Let us discuss the  relation between the CW-topology   and the CQ-topology on   $C(M^{2n},P^k)$.

\begin{lem}
If the canonical map $\widehat{\pi} : C(M^{2n}, P^{k})\to P^{k}$ is a bijection then the CW-topology and the  CQ-topology on $C(M^{2n}, P^{k})$ coincide.
\end{lem}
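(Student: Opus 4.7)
The strategy is to show that, under the bijection hypothesis, both topologies make $\widehat{\pi}$ a homeomorphism onto $P^k$; since the two maps agree on underlying sets, this identifies the topologies. By Corollary~\ref{can-bij}, the hypothesis says that every admissible polytope is a face of $P^k$ (or $P^k$ itself), so the cell decomposition used to build $CW(M^{2n},P^k)$ has exactly one cell for each face of $P^k$, and these cells are mapped by $\widehat{\pi}$ to the corresponding open faces of $P^k$ bijectively.

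For the CW side, the observation above is exactly the statement that $\widehat{\pi} : CW(M^{2n},P^k) \to P^k$ is a cell map with respect to the standard cell decomposition of $P^k$. By the preceding lemma (the ``cell map if and only if homeomorphism'' statement), this forces $\widehat{\pi}$ to be a homeomorphism from $CW(M^{2n},P^k)$ to $P^k$.

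For the CQ side, I first note that $\widehat{\pi} : CQ(M^{2n},P^k) \to P^k$ is continuous (a canonical continuous map) and bijective (by hypothesis). The target $P^k$ is Hausdorff, so it suffices to check that $CQ(M^{2n},P^k)$ is compact. This follows because the canonical map $f : M^{2n} \to CQ(M^{2n},P^k)$ is continuous by definition of the quotient topology and surjective: for any $y \in C(M^{2n}, P^k)$ lying in some $\stackrel{\circ}{P_{\sigma}^{'}}$, Axiom~\ref{fiber} gives a point $x \in W_{\sigma}$ with $\mu(x) = \widehat{\pi}(y)$, and then $f(x) = y$ because $\widehat{\pi}$ restricts to a homeomorphism $\stackrel{\circ}{P_{\sigma}^{'}} \to \stackrel{\circ}{P_{\sigma}}$. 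Compactness of $M^{2n}$ then yields compactness of $CQ(M^{2n},P^k)$, and a continuous bijection from a compact space to a Hausdorff space is a homeomorphism.

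Combining the two, the set-theoretic identity on $C(M^{2n},P^k)$ factors as $\widehat{\pi}^{-1} \circ \widehat{\pi}$ through $P^k$ and is therefore a homeomorphism between the CW-topology and the quotient topology. There is no real obstacle; the only nontrivial ingredient is the surjectivity of $f$, which is handled cleanly by Axiom~\ref{fiber} together with the fact that $\widehat{\pi}$ restricts to a homeomorphism on each open cell.
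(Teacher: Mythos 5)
The paper states this lemma without giving a proof, so there is no argument of theirs to compare against; your proof is a correct filling-in of the gap, and it follows the route one would expect given the surrounding material. Your strategy---show that each topology makes $\widehat{\pi}$ a homeomorphism onto $P^k$, so both are the pullback of the standard topology on $P^k$ along the bijection---is the natural one. For the CW side you correctly observe that, by Corollary~\ref{can-bij}, the bijection hypothesis means the cells of $CW(M^{2n},P^k)$ are exactly the open faces of $P^k$, so $\widehat{\pi}$ is a cell map and hence a homeomorphism by the preceding lemma. For the CQ side you use that $\widehat{\pi}:CQ(M^{2n},P^k)\to P^k$ is a continuous bijection from a compact space (as a quotient of the compact $M^{2n}$, with $f$ surjective) to the Hausdorff space $P^k$; this is consistent with the paper, which already records compactness of $CQ(M^{2n},P^k)$ in the proof of Lemma~\ref{E-CH} and Hausdorffness of $CQ(M^{2n},P^k)$ under the bijection hypothesis in Lemma~\ref{HF}. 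One small remark: your detour through Axiom~\ref{fiber} to get surjectivity of $f$ is correct but slightly heavier than necessary---surjectivity follows directly from the definition of $f$ and the fact that each admissible $W_\sigma$ is non-empty and the restriction of $\mu$ to $W_\sigma$ has image $\stackrel{\circ}{P_\sigma}$, which is already how $f$ was constructed in Lemma~\ref{canonic}. The final composition argument is sound.
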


\begin{ex}
For a quasitoric manifold $M^{2n}$ these two topologies on $ C(M^{2n}, P^{n})=P^n$ coincide.
\end{ex}

 As a  direct consequence of Proposition~\ref{canon-ncont}, we  also deduce the following:

\begin{lem}
If the set of admissible polytopes $P_{\mathfrak{S}}$ contains  a   $k$-dimensional polytope different from $P^{k}$, then there exists a set which  is open   in $CW (M^{2n}, P^k)$, but which is not open in $CQ(M^{2n}, P^k)$.
\end{lem}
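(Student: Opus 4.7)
The plan is to exhibit an explicit witness. Let $P_\sigma$ be a $k$-dimensional admissible polytope with $P_\sigma \neq P^k$, and take the candidate set to be the interior cell $\stackrel{\circ}{P_\sigma^{'}} \subseteq C(M^{2n}, P^k)$. I would prove two things: this set is open in $CW(M^{2n}, P^k)$, and its $f$-preimage fails to be open in $M^{2n}$, which by the definition of the quotient topology immediately rules out openness in $CQ(M^{2n}, P^k)$.

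For openness in $CW(M^{2n}, P^k)$, I first note that every admissible polytope is a subpolytope of $P^k$, so the CW-complex has cells of dimension at most $k$. Hence $\stackrel{\circ}{P_\sigma^{'}}$ is a top-dimensional open cell, and for any other cell $\stackrel{\circ}{P_\tau^{'}}$ the two interiors are disjoint by construction of $C(M^{2n}, P^k)$ as a formal disjoint union. The only closed cell whose interior meets $\stackrel{\circ}{P_\sigma^{'}}$ is $P_\sigma^{'}$ itself, where the intersection is the (relative) interior, open in $P_\sigma^{'}$. By the standard criterion for weak topology, $\stackrel{\circ}{P_\sigma^{'}}$ is therefore open in $CW(M^{2n}, P^k)$.

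For the quotient topology, I would compute $f^{-1}(\stackrel{\circ}{P_\sigma^{'}})$ using the construction of $f$ in Lemma~\ref{canonic}: for $x \in W_\tau$ the point $f(x)$ lies in $\stackrel{\circ}{P_\tau^{'}}$, and the cells $\stackrel{\circ}{P_\tau^{'}}$ for distinct admissible $\tau$ are pairwise disjoint in $C(M^{2n},P^k)$. Hence $f^{-1}(\stackrel{\circ}{P_\sigma^{'}}) = W_\sigma$. Since $P_\sigma \neq P^k$, Axiom~\ref{bij} forces $\sigma \neq [1,m]$, so $W_\sigma$ is not the main stratum. Lemma~\ref{open} then gives that $W_\sigma$ is not open in $M^{2n}$, and by the definition of $CQ(M^{2n}, P^k)$ this shows $\stackrel{\circ}{P_\sigma^{'}}$ is not open in $CQ(M^{2n}, P^k)$.

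There is no serious obstacle here; the statement is essentially a repackaging of Proposition~\ref{canon-ncont} together with the definition of the quotient topology. The only point requiring mild care is the first step: verifying that a top-dimensional open cell of a CW-complex built from formally disjoint polytopes is automatically open, which just amounts to unwinding the weak-topology axiom and the disjointness of the interiors.
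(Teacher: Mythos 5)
Your argument is correct and follows essentially the same route as the paper: the paper states the lemma as a direct consequence of Proposition~\ref{canon-ncont}, whose proof exhibits precisely the witness $\stackrel{\circ}{P_\sigma^{'}}$, observes it is an open cell of $CW(M^{2n},P^k)$, and shows $f^{-1}(\stackrel{\circ}{P_\sigma^{'}})=W_\sigma$ is not open by Lemma~\ref{open}. You have simply made explicit the two steps that the paper leaves implicit — the weak-topology check that a top-dimensional open cell is open, and the deduction that $W_\sigma$ is not the main stratum — both of which are handled correctly.
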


Example~\ref{G42} demonstrates the situation  described in this Lemma.  The interior of any four-sided pyramid is an open set in  $CW(G_{4,2}, \Delta _{4,2})$, but it is not an open set in  $CQ(G_{4,2}, \Delta _{4,2})$.

Therefore, in general,  the sets in $C(M^{2n}, P^{k})$ which are open in the CW-topology are not necessarily open in the CQ-topology.

 The inverse inclusion does not hold as well, in general the sets in $C(M^{2n}, P^{k})$ which are open in the quotient topology are  not necessarily open in the CW-topology.
 We demonstrate this   in the case of  Grassmann manifold $G_{4,2}$.

%%%%%%%%%%%%%%   8.2.1

\subsubsection{An example of a closed set  in  $CQ(G_{4,2}, \Delta_{4,2})$ that  is not closed in   $CW(G_{4,2}, \Delta_{4,2})$}

We follow the notation and the methods from~\cite{BT-1}.
Let us consider the set $\mathcal{C}$ in $G_{4,2}$  given by the matrices 
\[
\mathcal{C}=\left(\begin{array}{cccc}
1 & 0 & c & 1\\
0 & 1 & 1 &1
\end{array}\right) ,\;\; c\neq 0,1.
\]
This set  belongs to the main stratum since all its points  have all non-zero Pl\"ucker coordinates. We obtain the closure of $\mathcal{C}$   by attaching the limit points when  
$c\rightarrow 0,1, \infty.$
\begin{itemize}
\item When $c\rightarrow 0$, we obtain the point
\[
C_{0}=\left(\begin{array}{cccc}
1 & 0 & 0 & 1\\
0 & 1 & 1 & 1
\end{array}\right).
\]
\item When $c\rightarrow 1$, we obtain the point
\[
C_{1}=\left(\begin{array}{cccc}
1 & 0 & 1& 1\\
0 & 1 & 1 & 1
\end{array}\right) .
\]
\item When $c\rightarrow \infty$, in order  to see which   point  is obtaining,  we can proceed as follows. Since the set $\mathcal{C}$ belongs to all charts, it follows that  it can be  written down  in the local coordinates of the chart $M_{23}$. The points in the charts $M_{12}$ and $M_{23}$ are uniquely expressed   by the matrices:
\[
\left(\begin{array}{cccc}
1 & 0 & z_1 & z_2\\
0 & 1 & z_3 & z_4
\end{array}\right)  \;\; \text{and}\;\;
\left(\begin{array}{cccc}
w_1 & 1 & 0 & w_2 \\
w_3 & 0 & 1 & w_4
\end{array}\right) .
\]
Therefore, the transition map, on the intersection of these charts,   from the coordinates $(z_1,z_2,z_3,z_4)$ in the chart $M_{12}$ to the coordinates 
$(w_1, w_2,w_3, w_4)$ in the chart $M_{23}$ is given by the formulas
\[
w_1= -\frac{z_3}{z_1},\; w_2 = z_4-\frac{z_2z_3}{z_1},\; w_3 = \frac{1}{z_1},\; w_4=\frac{z_2}{z_1}.
\]
This  implies that the set $\mathcal{C}$ writes in the chart $M_{23}$ as
\[
\mathcal{C}=\left(\begin{array}{cccc}
-\frac{1}{c} & 1 & 0 & 1-\frac{1}{c}\\
 \frac{1}{c} & 0 & 1 & \frac{1}{c}
\end{array}\right) , \;\; c\neq 0,1, 
\]
and,   when $c\rightarrow \infty$, we obtain the point
\[
C_{\infty}=\left(\begin{array}{cccc}
0 &  1 & 0 & 1\\
0 & 0 & 1 & 0
\end{array}\right).
\]
\end{itemize}

Let us consider the closed set $\overline{\mathcal{C}} = \mathcal{C}\cup \{C_{0}, C_{1}, C_{\infty}\}.$  Its image  by the canonical map $f : G_{4,2}\to C(G_{4,2}, \Delta _{4,2})$ will be as follows:
\begin{itemize}
\item $f(\mathcal{C})\subset \stackrel{\circ}{\Delta} _{4,2}$, since $\mathcal{C}$ belongs to the main stratum.
\item $f(C_{0})\in \stackrel{\circ}{P_{23}}$, where $P_{23}$ is a four sided pyramid that  does not contain the vertex $\delta _{23}$.
\item $f(C_{1})\in \stackrel{\circ}{P_{34}}$, where $P_{34}$ is a four sided pyramid that  does not contain the vertex $\delta _{34}$.
\item $f(C_{\infty})\in (\delta _{23}, \delta _{34})$, where $(\delta _{23}, \delta _{34})$ is an edge with the vertices $\delta _{23}$ and $\delta _{34}$.
\end{itemize}

Therefore,  the set $f(\overline{\mathcal{C}})$ is not a closed set in $CW (G_{4,2}, \Delta _{4,2})$ since $f(\mathcal{C})\cap \Delta _{4,2} = \mu(\mathcal{C})\cup \mu(\{C_{\infty}\})$ which  it is not a closed set in $\Delta _{4,2}$, the points $\mu (C_{0}), \mu (C_{1})$ from the closure of $\mu (\mathcal{C})$ are missing. These points  belong to the open cells $\stackrel{\circ}{P}_{23}$ and $\stackrel{\circ}{P}_{34}$, respectively.

On the other hand, $f(\overline{\mathcal{C}})$   is obviously a  closed set in the quotient topology $CQ(G_{4,2}, \Delta _{4,2})$.

Therefore, the CW topology and the CQ  topology on $C(G_{4,2}, \Delta _{4,2})$ are essentially different: for each of these two topologies there  is   a set which is open   in  one topology, but  which is not open in the other.

%Note that, in general, this topology does not coincide with the natural weak topology of this complex, where the boundary  of any %polytope is given by the operator $d$.  Namely, if we take the  Grassmann manifold $G_{4,2}$, it follows from~\cite{BT-1} that %any four-sided pyramid in $\Delta _{4,2}$ is an admissible polytope. Let us consider the polytope $P^{3^{'}}\subset C(G_{4,2}, \Delta %_{4,2})$ that corresponds to some of these six pyramids.  Then  $P^{3^{'}}$  is  open in  $C(G_{4,2}, \Delta _{4,2})$ in its weak topology. But it is not open set  in the topology we defined since  $f^{-1}(P^{3^{'}}$ is one $(\C ^{*})^{3}$-orbit  which can  not be open set  in $8$-dimensional  manifold $G_{4,2}$. Moreover, the closure of the polytope $\stackrel{\circ}{\Delta _{4,2}}\subset  C(G_{4,2}, \Delta _{4,2})$ is the whole $C(G_{4,2}, \Delta _{4,2})$ in the topology we defined, while it is $\Delta _{4,2}$ in the weak topology.

Note that, in a general case when the operator $\tilde{d}$ is defined on the set of all  strata   $W_{\mathfrak{S}}$, it can be also defined  a  weak topology on  $W_{\mathfrak{S}}$ such that the boundary of any stratum  is given by the operator $\tilde{d}$. In this case $A\subseteq W_{\mathfrak{S}}$ is closed if and only if its intersection with the closure of any stratum  is closed. It is obvious that this topology coincide with the topology induced  from $M^{2n}$, since the topology on  strata  is induced from  the topology of a manifold  $M^{2n}$.

%%%%%%%%%%%%%%%%%   8.3

\subsection{Induced partial ordering on $CQ(M^{2n}, P^k)$}
There is a canonical way~\cite{E} to introduce a  preorder on  $CQ(M^{2n}, P^{k})$ using the quotient topology. More precisely
for $x,y\in CQ(M^{2n}, P^{k})$ one defines
\[
x\leq y \;\; \text{if and only if}\;\; x\in \overline{y},
\]
where $\overline{y}$ denotes the closure of $y$ in the quotient topology.
%This preorder is even partial  order since $CQ(M^{2n}, P^{k})$ is $T_{0}$-space according to Lemma~\ref{T0}.  It is often called  specialization preorder.
Note that on a general  Hausdorff topological space this preorder becomes trivial, which means that  $x\leq y$ implies $x=y$.
Since $\widehat{\pi} : CQ(M^{2n}, P^k)\to P^k$ is a continuous map, it follows:
\[
 x\leq y\;\;  \text{implies}\;\;   \widehat{\pi}(x) = \widehat{\pi}(y).
\]

 Obviously if the set of all admissible polytopes consists of $P^{k}$ and its faces this preorder will be trivial because in this case,  by Lemma~\ref{HF},  $CQ(M^{2n}, P^k)$ is a Hausdorff  space.  If this is not the case  from the  definition of the preorder, it directly follows:

\begin{lem}\
 If  $x\in P^{k^{'}}$  then there  is no $y\in CQ (M^{2n}, P^k)$, $y\neq x$  such that $x\leq y$.
  On the other hand,  if  $P_{\sigma}\neq P^{k}$ is an admissible polytope that  is not a face of $P^{k}$ then for any $x\in P_{\sigma}^{'}$ there exists a unique $y\in P^{k^{'}}$ such that $x\leq y$.
\end{lem}

Recall that, in a partially ordered set $(X, \leq)$,  an upper set is defined to be  a subset $U$ of $X$ such that if $x\in U$ and $x\leq y$ then $y\in U$ and,   accordingly, it  is defined a  lower set. It is a standard fact that, regarded to the specialization preorder on a topological space $X$, every open set in $X$  is an upper set  and every closed set in $X$  is a lower set.  Recall also that a topological space $X$  is said to be an P.~S.~Alexandrov space if the intersection of any family of open sets is an open set. Alexandrov topologies on $X$ are in one-to-one correspondence with preorders on $X$  meaning that $X$ is  an Alexandrov space if and only if every upper  set  regarded  to the specialization preorder is an open set. In particular, it implies that a Hausdorff topological space is an Alexandrov space if and only it is a discrete space.

As for the space $CQ(M^{2n}, P^k)$, for simplicity,  we consider   the case when the set of all admissible polytopes $P_{\mathfrak{S}}$ is a pure set. If  $P_{\mathfrak{S}}$ consists only of $P^{k}$ and its faces, it follows from  Lemma~\ref{HF} that $CQ(M^{2n}, P^k)$ is a Hausdorff topological space, which is obviously not discrete, so it is not an Alexandrov space. We prove that the same holds in general:

\begin{lem}
Assume that the set of admissible polytopes is a pure set and contains a polytope different from $P^k$ and its faces.  Then the  space $CQ(M^{2n}, P^k)$ is not a Hausdorff topological space and it  is not an    Alexandrov space as well.
\end{lem}

\begin{proof} Since $P_{\mathfrak{S}}$  contains a polytope $P_{\sigma}$ different from $P^k$ and its faces, it follows that  there exists a face $P_{\bar{\sigma}}$ of $P_{\sigma}$ such that $\stackrel{\circ}{P_{\bar{\sigma}}} \subset \stackrel{\circ}{P^k}$.  Let $V\subset CQ(M^{2n}, P^k)$  be the smallest,  under inclusion,  upper set that  contains $P^{k^{'}}$ and   $P_{\bar{\sigma}}^{'}$. Then $P_{\sigma}^{'}\not\subset V$ since there is no $x\in P_{\bar{\sigma}}^{'}$ such that $x\leq y$ for some $y\in P_{\sigma}^{'}$ as $\widehat{\pi}(x)\neq \widehat{\pi}(y)$. The set $V$ is not open in $ CQ(M^{2n}, P^k)$. Namely $f^{-1}(V)$ contains the stratum  over $\stackrel{\circ}{P_{\bar{\sigma}}}$  but it does not contain the stratum
over  $\stackrel{\circ}{P_{\sigma}}$, while  from Axiom~\ref{leaf} it follows  that there exist  points in the stratum  over $\stackrel{\circ}{P_{\bar{\sigma}}}$ which   are in the closure of the stratum  over $\stackrel{\circ}{P_{\sigma}}$.
\end{proof}

\begin{ex}
For the sake of clearness we provide  an   explicit  example of a set  which is an upper set in  $CQ(G_{4,2}, \Delta _{4,2})$, but which is not open. Let  $V\subset CQ(G_{4,2}, \Delta _{4,2})$ is given as the union of $\Delta_{4,2}^{'}$ and $P_{12, 34}^{'}$, where $P_{12,34}$ is a square in $\Delta _{4,2}$  which does not contain the vertices $\delta _{12}$ and $\delta _{34}$, see~\cite{BT-1}.   Then $V$ is not an open set in $CQ(G_{4,2}, \Delta _{4,2})$ since $f^{-1}(V)$ in not open in $G_{4,2}$. Namely, $f^{-1}(V)$  consists of the main stratum and of the  four-dimensional $(\C ^{*})^3$-orbit which maps to the square $\stackrel{\circ}{P}_{12,34}$. On the one hand, this orbit is   contained in the closures of the two six-dimensional $(\C ^{*})^3$-orbits,  which map to the pyramids $P_{12}$ and $P_{34}$ by the moment map. Therefore, the set $f^{-1}(V)$  is not  open in $G_{4,2}$, which  implies that the set $V$ is not open in $CQ(G_{4,2}, \Delta _{4,2})$. On the other hand,  it is obvious that $V$ in an upper set since for any $x\in V$ if $x\leq y$ it follows  that $x=y$ or $x\in P_{12,34}^{'}$, $y\in \Delta _{4,2}^{'}$ and $\widehat{\pi}(x) =\widehat{\pi}(y)$.
\end{ex}

\section{The  space $\mathfrak{E}(M^{2n}, P^k)$}

Using results from  previous sections, we can define the set  $\mathfrak{E}(M^{2n}, P^{k})$ over the complex $C(M^{2n}, P^{k})$ by:
\begin{equation}
\mathfrak{E}(M^{2n}, P^{k}) = \{(x,y)\in C(M^{2n}, P^{k})\times M^{2n}  : x\in P_{\sigma}^{'}, y\in W_{\sigma},  \widehat{\pi} (x)=\mu (y)\}.
\end{equation}

A topology on $\mathfrak{E}(M^{2n}, P^{k})$  is  defined  as the induced topology  by the embedding  $\mathfrak{E}(M^{2n}, P^k)\to CQ(M^{2n}, P^{k})\times M^{2n}$. In this  case the topology on  $CQ(M^{2n}, P^{k})$   can be obtained as a  quotient topology defined  by the map  $p: \mathfrak{E}(M^{2n}, P^{k}) \to CQ(M^{2n}, P^{k})$.

%Proposition~\ref{point}  immediately implies:
%\begin{cor}
% If $F$ is a point then $\mathfrak{E}(M^{2n}, P^{k})/T^k = CQ(M^{2n}, P^{k})$ and $CQ(M^{2n}, P^{k})$ is homeomorphic to $P^{k}$.
% \end{cor}

\begin{lem}\label{E-CH}
The space $\mathfrak{E} (M^{2n}, P^{k})$ is a compact  Hausdorff topological space.
\end{lem}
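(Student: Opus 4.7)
My plan is to construct an explicit homeomorphism between $\mathfrak{E}(M^{2n}, P^k)$ and $M^{2n}$, and then inherit both properties from the manifold. The candidate map is $g : M^{2n} \to \mathfrak{E}(M^{2n}, P^k)$ defined by $g(y) = (f(y), y)$, where $f$ is the canonical map from Lemma~\ref{canonic}. Observe that $g$ genuinely lands in $\mathfrak{E}(M^{2n}, P^k)$: if $\sigma$ is the unique admissible set with $y \in W_{\sigma}$, then by construction $f(y) \in \stackrel{\circ}{P}_{\sigma}^{'}$ and $\widehat{\pi}(f(y)) = \mu(y)$, so $(f(y), y)$ satisfies the defining conditions.

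Next I would establish that $g$ is a bijection onto $\mathfrak{E}(M^{2n}, P^k)$. Injectivity is immediate from the second coordinate. For surjectivity, given $(x, y) \in \mathfrak{E}(M^{2n}, P^k)$, there is a unique admissible set $\sigma$ with $y \in W_{\sigma}$; then both $x$ and $f(y)$ lie in $\stackrel{\circ}{P}_{\sigma}^{'}$ and satisfy $\widehat{\pi}(x) = \mu(y) = \widehat{\pi}(f(y))$, so Corollary~\ref{homeom2}, which says that $\widehat{\pi}$ restricts to a homeomorphism from $\stackrel{\circ}{P}_{\sigma}^{'}$ onto $\stackrel{\circ}{P}_{\sigma}$, forces $x = f(y)$.

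It then remains to verify $g$ is a homeomorphism. Continuity of $g$ follows from continuity of the two coordinate maps: $f : M^{2n} \to CQ(M^{2n}, P^k)$ is continuous by the very definition of the quotient topology on $CQ(M^{2n}, P^k)$, and the identity $y \mapsto y$ is tautologically continuous, so the product map is continuous and hence so is its corestriction to $\mathfrak{E}(M^{2n}, P^k)$. The inverse of $g$ is the restriction to $\mathfrak{E}(M^{2n}, P^k)$ of the projection $CQ(M^{2n}, P^k) \times M^{2n} \to M^{2n}$, which is continuous. Consequently $\mathfrak{E}(M^{2n}, P^k) \cong M^{2n}$, and since $M^{2n}$ is a closed (hence compact) smooth manifold and in particular Hausdorff, the same holds for $\mathfrak{E}(M^{2n}, P^k)$.

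I do not anticipate a genuine obstacle here. The only subtle point worth flagging is that $CQ(M^{2n}, P^k)$ itself is typically not Hausdorff (Lemma~\ref{T1}) and $f$ is in general far from a homeomorphism (Proposition~\ref{canon-ncont}), so one cannot argue just by saying that the product or the factor is already Hausdorff. The mechanism that rescues the statement is the equation $\widehat{\pi}(x) = \mu(y)$ built into the definition of $\mathfrak{E}(M^{2n}, P^k)$, which, combined with $y \in W_{\sigma}$ and $x \in \stackrel{\circ}{P}_{\sigma}^{'}$, pins down $x$ uniquely from $y$; this is what makes the projection to the second factor an honest homeomorphism.
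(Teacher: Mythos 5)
Your proof is correct and takes a genuinely different — and, I think, sharper — route than the paper. The paper argues for compactness and the Hausdorff property directly: the Hausdorff step views $\mathfrak{E}(M^{2n}, P^k)$ as a subspace of $CQ(M^{2n}, P^k) \times M^{2n}$ with the second factor Hausdorff (which implicitly requires injectivity of the projection to $M^{2n}$, a fact the paper only records afterwards), and the compactness step runs a covering argument, extracting finite subcovers of the two projections onto $CQ(M^{2n}, P^k)$ and $M^{2n}$ and then forming the family $(U_i \times V_j)\cap \mathfrak{E}(M^{2n}, P^k)$ — which does cover $\mathfrak{E}(M^{2n}, P^k)$ but is not a subfamily of the original cover $\{E_i\}$, so as written that step does not close. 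You instead observe that $\mathfrak{E}(M^{2n}, P^k)$ is precisely the graph of the continuous quotient map $f$, build the homeomorphism $g(y) = (f(y),y)$ explicitly, check bijectivity from the definition of $\mathfrak{E}(M^{2n}, P^k)$ together with Corollary~\ref{homeom2}, get continuity of $g$ from continuity of $f$, and get continuity of $g^{-1}$ because it is a restriction of the coordinate projection. This anticipates and subsumes the paper's later Theorem~\ref{ME} (which the paper obtains from the present lemma via the continuous-bijection-from-compact-to-Hausdorff principle) and yields the lemma cleanly by transfer along a homeomorphism from the compact Hausdorff manifold $M^{2n}$.
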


\begin{proof} The canonical  map $f: M^{2n}\to CQ(M^{2n}, P^{k})$ is a continuous, surjective map. The space $\mathfrak{E}(M^{2n}, P^{k})$ can be identified with the  graph of the map $f$.  Since the manifold $M^{2n}$ is a Hausdorff topological space, it follows that $\mathfrak{E}(M^{2n}, P^{k})$ is a Hausdorff space as well.
%We first note that if $(x_1, y), (x_2, y)\in \mathfrak{E} (M^{2n}, P^{k})$ then $x_1=x_2$. It follows from the observation that %for such points we have that $y\in W_{\sigma}$, $x_1, x_2\in P_{\sigma}^{'}$ and $ \widehat{\pi} (x)=\mu (y)= \widehat{\pi} (x)%$, and that the map $\widehat{\pi}$ is injection on any $P_{\sigma}^{'}$.  Now, since  $\mathfrak{E} (M^{2n}, P^{k})\subset %CQ(M^{2n}, P^k)\times M^{2n}$ and the manifold $M^{2n}$ is a Hausdorff topological space,  it follows  that $\mathfrak{E}%(M^{2n}, P^{k})/T^k$ is Hausdorff space as well.
As for the compactness  let us consider a covering $E_{i}, i\in I$ of   the space $\mathfrak{E}(M^{2n}, P^{k})$ by an open sets. Without loss of generality, we may assume that  $E_i = (U_i\times V_i) \cap \mathfrak{E}(M^{2n}, P^{k})$ for $i\in I$, where $U_i$ and $V_i$ are open sets in $CQ(M^{2n}, P^k)$ and $M^{2n}$ respectively. Note that for any point $y\in M^{2n}$ there exists a point $x\in CQ(M^{2n}, P^{k})$ such that $(x,y)\in \mathfrak{E}(M^{2n}, P^{k})$, which  implies that $V_{i}, i\in I$ is an open covering for $M^{2n}$.  In addition,  for any point  $ x\in CQ(M^{2n}, P^{k})$,  there exists a point $y\in M^{2n}$  such that $(x,y) \in \mathfrak{E}(M^{2n}, P^{k})$,  which  implies that $U_{i}, i\in I$ is an open covering for $CQ (M^{2n}, P^k)$. The manifold   $M^{2n}$ is assumed to be a compact space,  which  implies that its  quotient space $CQ (M^{2n}, P^k)$  is a compact space as well. Therefore, there are finite sub-coverings $U_1,\ldots , U_s$ for $CQ(M^{2n}, P^k)$ and $V_1,\ldots, V_l$ for $M^{2n}/T^k$, which implies that $X_{ij} = (U_i\times V_j )\cap \mathfrak{E} (M^{2n}, P^{k})$ is a  finite sub-covering of $E_{i}$, $i\in I$ for $\mathfrak{E}(M^{2n}, P^{k})$.
\end{proof}

There are two natural  projections:
\begin{equation}\label{pr}
G_1:  \mathfrak{E} (M^{2n}, P^{k}) \to CQ(M^{2n}, P^k)\; \text{and} \; G_2 :  \mathfrak{E} (M^{2n}, P^{k})\to M^{2n}.
\end{equation}

\begin{itemize}
\item  The  maps  $G_1$ and $G_2$  are obviously surjective. For the map $G_1$, this  follows from the observation that   for any point $x\in P_{\sigma}^{'}\subset  CQ(M^{2n}, P^k)$ there exists
a point $y\in W_{\sigma}\subset M^{2n}$,  such that $\widehat{\pi}(x) = \mu (y)$. This is because   $\mu (W_{\sigma}) = \stackrel{\circ}{P_{\sigma}} = \widehat{\pi}(P_{\sigma}^{'})$, which implies that $(x,y)\in  \mathfrak{E}(M^{2n}, P^{k})$. As for the surjectivity of the map $G_2$, for any point $y\in W_{\sigma}\subset  M^{2n}$ and any  point $x\in P_{\sigma}^{'}$  such that $\widehat{\pi}(x) = \mu (y)$,  we see that $(x,y)\in  \mathfrak{E}(M^{2n}, P^{k})$. 
\item The  maps $G_1$ and $G_2$ are obviously continuous. It follows from the fact that $G_{1}^{-1}(U) =( U\times M^{2n})\cap  \mathfrak{E}(M^{2n}, P^{k})$  and $G_{2}^{-1}(V) = (CQ(M^{2n}, P^k) \times V) \cap  \mathfrak{E}(M^{2n}, P^{k})$  are open sets in  $\mathfrak{E} (M^{2n}, P^{k})$  for open sets $U\subseteq CQ(M^{2n}, P^k)$ and $V\subseteq M^{2n}$.
\end{itemize}

\begin{lem}
The map $G_2$ is injective, while the map $G_1$ is not injective.
\end{lem}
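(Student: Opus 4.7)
The plan is to handle the two claims separately, both with short arguments that unpack the definition of $\mathfrak{E}(M^{2n}, P^{k})$.

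For injectivity of $F$, I would start with two points $(x_1, y), (x_2, y) \in \mathfrak{E}(M^{2n}, P^{k})$ having the same second coordinate and show $x_1 = x_2$. Since the strata $\{W_{\sigma}\}$ are pairwise disjoint and cover $M^{2n}$, the point $y$ lies in exactly one stratum $W_{\sigma}$. By the definition of $\mathfrak{E}(M^{2n}, P^{k})$, this forces both $x_1$ and $x_2$ to lie in $\stackrel{\circ}{P}_{\sigma}^{'}$, the unique interior of the polytope in $C(M^{2n}, P^{k})$ corresponding to $\sigma$. By Corollary~\ref{homeom2}, $\widehat{\pi} : P_{\sigma}^{'} \to P_{\sigma}$ is a homeomorphism, hence injective on $\stackrel{\circ}{P}_{\sigma}^{'}$. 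Since $\widehat{\pi}(x_1) = \mu(y) = \widehat{\pi}(x_2)$, we conclude $x_1 = x_2$.

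For non-injectivity of $G$, I would produce an explicit $x \in CQ(M^{2n}, P^{k})$ with more than one preimage. Pick any $x \in \stackrel{\circ}{P^{k}}$ regarded as a point of the top cell of $C(M^{2n}, P^{k})$; the associated stratum is then the main stratum $W = W_{[1,m]}$. A point $(x, y) \in \mathfrak{E}(M^{2n}, P^{k})$ requires precisely that $y \in W$ with $\mu(y) = \widehat{\pi}(x)$. Since $\chi(W)$ is trivial, $T^{\sigma} = T^{k}$ acts freely on $W$, and because $\mu$ is $T^{k}$-invariant, the whole $T^{k}$-orbit of any chosen $y \in W \cap \mu^{-1}(\widehat{\pi}(x))$ lies in the fiber $G^{-1}(x)$. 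As $k \geq 1$, this orbit is homeomorphic to $T^{k}$ and therefore contains more than one point, so $G$ fails to be injective.

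The only subtle point is confirming that $\mu^{-1}(\widehat{\pi}(x)) \cap W$ is non-empty for $x$ in the top cell; this is immediate from the first bullet of Axiom~\ref{fiber}, which gives $\mu(W) = \stackrel{\circ}{P^{k}}$. No further obstacle is expected: both statements reduce directly to the disjointness of the stratification, the fibered structure of the almost moment map, and Corollary~\ref{homeom2}.
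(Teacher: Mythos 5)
Your proof is correct and follows essentially the same approach as the paper. For injectivity of $F$ you invoke exactly the same chain (disjointness of strata forces $x_1,x_2\in\stackrel{\circ}{P_{\sigma}^{'}}$ for the unique $\sigma$ with $y\in W_{\sigma}$, then Corollary~\ref{homeom2}), which is what the paper does. For non-injectivity of $G$ you specialize to the main stratum where the $T^k$-action is free, which is a slightly more concrete variant of the paper's choice of any nontrivial $T^k$-orbit in some $W_{\sigma}$; both rest on the $T^k$-invariance of $\mu$ and of the strata, so they hit the same idea.
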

\begin{proof} The map  $G_2$ is injective since the condition $G_2(x_1,y)=G_2(x_2,y)$ implies that $\widehat{\pi} (x_1) = \widehat{\pi} (x_2) = \widehat{\mu}(y)$ and $x_1,x_2\in P_{\sigma}^{'}$. But, $\widehat{\pi} : P_{\sigma}^{'}\to \stackrel{\circ}{P_{\sigma}}$ is a homeomorphism,  which  implies $x_1=x_2$. The map  $G_1$ is not injective since  for any $y_1,y_2$ which belong to the same non-trivial $T^k$-orbit of  a stratum $W_{\sigma}$,  there exists a point $x\in P_{\sigma}^{'}$  such   that $\widehat{\pi}(x) = \mu (y_1) = \mu (y_2)$ and  $(x,y_1), (x,y_2)\in   \mathfrak{E}(M^{2n}, P^{k})$. It implies that $G_1(x, y_1)=G_1(x,y_2)=x$.
\end{proof}

Altogether this  leads to the following key result:

\begin{thm}\label{ME}
The  space $\mathfrak{E}(M^{2n}, P^{k})$ is homeomorphic to the space $M^{2n}$.
The homeomorphism  $G_2: \mathfrak{E}(M^{2n}, P^{k}) \to M^{2n}$ is given by the map $G_2(x,y)=y$.
\end{thm}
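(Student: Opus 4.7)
The plan is to invoke the classical fact that a continuous bijection from a compact space onto a Hausdorff space is automatically a homeomorphism, and simply collect the ingredients already established in the preceding lemmas.

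First I would recall the three preparatory observations already proved in the excerpt: the map $F : \mathfrak{E}(M^{2n}, P^{k}) \to M^{2n}$, $F(x,y) = y$, is continuous (the preimage of an open $V \subseteq M^{2n}$ is $(CQ(M^{2n}, P^{k}) \times V) \cap \mathfrak{E}(M^{2n}, P^{k})$, open by definition of the subspace topology); it is surjective (for each $y \in W_{\sigma}$ there is a unique $x \in \stackrel{\circ}{P_{\sigma}^{'}}$ with $\widehat{\pi}(x) = \mu(y)$, using Corollary~\ref{homeom2}); and it is injective (by the same uniqueness: if $F(x_1,y) = F(x_2,y)$, both $x_1, x_2$ lie in the same $\stackrel{\circ}{P_{\sigma}^{'}}$ and project to $\mu(y)$, and $\widehat{\pi}$ restricted to $\stackrel{\circ}{P_{\sigma}^{'}}$ is a homeomorphism). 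Hence $F$ is a continuous bijection.

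Second, I would invoke Lemma~\ref{E-CH}, which states that $\mathfrak{E}(M^{2n}, P^{k})$ is compact Hausdorff, together with the fact that $M^{2n}$ is Hausdorff (as a smooth manifold).

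Third, I would apply the standard topological principle: any continuous bijection from a compact space to a Hausdorff space is a closed map, hence a homeomorphism. Concretely, if $C \subseteq \mathfrak{E}(M^{2n}, P^{k})$ is closed, then $C$ is compact, its image $F(C)$ is compact in the Hausdorff space $M^{2n}$, and therefore closed. Thus $F^{-1}$ is continuous and $F$ is a homeomorphism.

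There is essentially no obstacle; all the substantive work has been done in Lemma~\ref{E-CH} (Hausdorffness and compactness of $\mathfrak{E}(M^{2n}, P^{k})$) and in the preceding lemma identifying $F$ as a continuous bijection. The only mild subtlety worth mentioning explicitly is that compactness of $\mathfrak{E}(M^{2n}, P^{k})$ already uses compactness of the quotient $CQ(M^{2n}, P^{k})$, which follows because it is a continuous image of the compact space $M^{2n}$ under the canonical map $f$.
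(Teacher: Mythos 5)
Your proposal is correct and follows exactly the paper's own argument: the preceding lemmas establish that $F$ is a continuous bijection, Lemma~\ref{E-CH} gives compactness of $\mathfrak{E}(M^{2n}, P^{k})$, and the standard compact-to-Hausdorff principle yields the homeomorphism. You have simply spelled out the details that the paper compresses into a single sentence.
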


\begin{proof}
The map  $G_2: \mathfrak{E}(M^{2n}, P^{k}) \to M^{2n}$ is a continuous bijection. Since $\mathfrak{E}(M^{2n}, P^{k})$ is a  compact space   and $M^{2n}$ is Hausdorff, it follows that   elementary topology arguments lead that $G_2$ is a  homeomorphism.
\end{proof}

We define an action of the torus $T^{k}$ on $CQ(M^{n}, P^{k})\times M^{2n}$ using  the  given $T^k$-action on $M^{2n}$. Since the strata  as well as the almost moment map $\mu$ are invariant for this  torus  action, it follows that  this action  induces an action of the torus  $T^k$ on $\mathfrak{E}(M^{2n}, P^{k})$. We obtain 
\[
\mathfrak{E}(M^{2n}, P^{k})/T^k = \{(x,y)\in CQ(M^{n}, P^{k})\times M^{2n}/T^k  :  x\in P_{\sigma}^{'}, y\in W_{\sigma}/T^k,\widehat{\pi} (x)=  \widehat{\mu} (y)\}.
\]
\begin{rem}
Since $W_{\sigma}/T^k\cong \stackrel{\circ}{P}_{\sigma}\times F_{\sigma}$ and $P_{\sigma}^{'}\cong \stackrel{\circ}{P}_{\sigma}$, it follows that the points from the set $\mathfrak{E}(M^{2n}, P^k)/T^k$ can be represented  as the pairs $(x, c_{\sigma})$, where $x\in P_{\sigma}^{'}$ and $c_{\sigma}\in F_{\sigma}$.
\end{rem}

From  Lemma~\ref{E-CH} it follows

\begin{cor}
The space $\mathfrak{E}(M^{2n}, P^{k})/T^k$ is a compact  Hausdorff topological space.
\end{cor}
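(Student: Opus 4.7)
The plan is to reduce the statement to the standard fact that the quotient of a compact Hausdorff space by a continuous action of a compact topological group is again compact and Hausdorff. Lemma~\ref{E-CH} provides the starting point: $\mathfrak{E}(M^{2n}, P^{k})$ is compact and Hausdorff. The $T^k$-action on $CQ(M^{2n}, P^{k}) \times M^{2n}$ is defined to act trivially on the first factor and by the given action on the second, so a typical element acts by $t\cdot(x,y) = (x, ty)$. First I would verify that this restricts to an action on $\mathfrak{E}(M^{2n}, P^{k})$: the strata $W_{\sigma}$ are $T^k$-invariant and $\mu$ is $T^k$-invariant, so the defining conditions $y \in W_{\sigma}$ and $\widehat{\pi}(x) = \mu(y)$ are preserved.

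Compactness of the quotient is immediate, since the quotient map is continuous and surjective and the continuous image of a compact space is compact. For Hausdorffness, I would show that the orbit equivalence relation
\[
R = \{(p_1, p_2) \in \mathfrak{E}(M^{2n}, P^{k}) \times \mathfrak{E}(M^{2n}, P^{k}) : p_2 = t \cdot p_1 \ \text{for some}\ t \in T^k\}
\]
is closed. It is the image of $T^k \times \mathfrak{E}(M^{2n}, P^{k})$ under the continuous map $(t, p) \mapsto (p, t\cdot p)$; the domain is compact (product of compact spaces, using Lemma~\ref{E-CH}) and the codomain is Hausdorff (product of Hausdorff spaces), so the image is compact and thus closed. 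Since the quotient map by a topological group action is always open, the standard point-set criterion (open quotient with closed graph) yields that $\mathfrak{E}(M^{2n}, P^{k})/T^k$ is Hausdorff.

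An equally clean alternative would be to invoke Theorem~\ref{ME}: the homeomorphism $F : \mathfrak{E}(M^{2n}, P^{k}) \to M^{2n}$ is $T^k$-equivariant by construction, so it descends to a homeomorphism $\mathfrak{E}(M^{2n}, P^{k})/T^k \cong M^{2n}/T^k$, and then the same compact-group-action argument applied directly to the compact Hausdorff manifold $M^{2n}$ finishes the job. I anticipate no genuine obstacle; the result is a routine bookkeeping corollary of Lemma~\ref{E-CH} together with the compactness of $T^k$.
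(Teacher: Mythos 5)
Your proof is correct and takes essentially the same route as the paper: the paper simply states that the corollary ``follows from Lemma~\ref{E-CH}'', and the standard fact you spell out (quotient of a compact Hausdorff space by a continuous action of a compact group is compact Hausdorff, via the open quotient map and closed orbit relation) is exactly the implicit content of that derivation. Your alternative via Theorem~\ref{ME} and the equivariance of $F$ is an equally valid shortcut that the paper also has available at this point in the text.
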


Note that the maps $G_1$ and $G_2$ defined by~\eqref{pr} are $T^k$-equivariant, where $CQ(M^{2n}, P^k)$ is considered to be  with the trivial $T^k$-action. Therefore,  they induce the maps of the corresponding orbit spaces $\widehat{G_1}: \mathfrak{E}(M^{2n}, P^{k})/T^k \to CQ(M^{n}, P^{k})$ and $\widehat{G_2}: \mathfrak{E}(M^{2n}, P^{k})/T^k \to M^{2n}/T^k$.

Combining the $T^k$-equaivariance of the map $\widehat{G_2}$ and Theorem~\ref{ME} we obtain :

\begin{thm}
The map $\widehat{G_2}: \mathfrak{E}(M^{2n}, P^{k})/T^k \to M^{2n}/T^k$  given by $\widehat{G_2}(x,y)=y$ is a homeomorphism.
\end{thm}

The map $\widehat{G_1}$ has the following important feature:

\begin{prop}\label{cfblocal}
The space $\widehat{G_1}^{-1}(P_{\sigma}^{'})$ is homeomorphic to the space   $P_{\sigma}^{'}\times F_{\sigma}$,  that is to the space $\stackrel{\circ}{P_{\sigma}}\times F_{\sigma}$ for any $\sigma \in \mathfrak{S}$.
\end{prop}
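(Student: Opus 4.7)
The plan is to construct an explicit homeomorphism using the structural trivialization $h_\sigma : W_\sigma/T^\sigma \to \stackrel{\circ}{P_\sigma}\times F_\sigma$ guaranteed by Corollary~\ref{homeom}, together with the homeomorphism $\widehat{\pi} : \stackrel{\circ}{P_\sigma^{'}} \to \stackrel{\circ}{P_\sigma}$ from Corollary~\ref{homeom2}. The statement is most cleanly read via the Remark preceding the proposition, which identifies points of $\mathfrak{E}(M^{2n},P^k)/T^k$ lying over $\stackrel{\circ}{P_\sigma^{'}}$ with pairs $(x,c_\sigma)$, where $x\in \stackrel{\circ}{P_\sigma^{'}}$ and $c_\sigma \in F_\sigma$.

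First I would unpack $\widehat{G}^{-1}(\stackrel{\circ}{P_\sigma^{'}})$ from the definition of $\mathfrak{E}$: it consists of pairs $(x,y)\in CQ(M^{2n},P^k)\times M^{2n}/T^k$ with $x\in \stackrel{\circ}{P_\sigma^{'}}$, $y\in W_\sigma/T^k$, and $\widehat{\pi}(x)=\widehat{\mu}(y)$. Under the trivialization, write $y$ as $h_\sigma^{-1}(p,c_\sigma)$ with $p\in \stackrel{\circ}{P_\sigma}$ and $c_\sigma\in F_\sigma$; the coupling condition $\widehat{\mu}(y)=\widehat{\pi}(x)$ forces $p=\widehat{\pi}(x)$, so the element is determined by $x$ and the free parameter $c_\sigma\in F_\sigma$.

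Next I would define the candidate homeomorphism
\[
\Phi : \stackrel{\circ}{P_\sigma^{'}}\times F_\sigma \longrightarrow \widehat{G}^{-1}(\stackrel{\circ}{P_\sigma^{'}}), \qquad \Phi(x,c_\sigma) = \bigl(x,\, h_\sigma^{-1}(\widehat{\pi}(x),c_\sigma)\bigr),
\]
with inverse $\Psi(x,y) = (x,\xi_\sigma(y))$, where $\xi_\sigma$ is the projection onto the $F_\sigma$-factor of $h_\sigma$. A direct check shows $\Phi\circ\Psi=\mathrm{id}$ and $\Psi\circ\Phi=\mathrm{id}$. The map $\Phi$ is continuous because $\widehat{\pi}$ and $h_\sigma^{-1}$ are, and $\Psi$ is continuous as the restriction of $(\mathrm{id}\times \xi_\sigma)$ to the subspace $\widehat{G}^{-1}(\stackrel{\circ}{P_\sigma^{'}})\subset CQ(M^{2n},P^k)\times M^{2n}/T^k$. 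Finally, the identification $\stackrel{\circ}{P_\sigma^{'}}\cong \stackrel{\circ}{P_\sigma}$ via $\widehat{\pi}$ yields the second stated homeomorphism.

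The argument is essentially bookkeeping; the only subtlety worth flagging is making sure the subspace topology on $\widehat{G}^{-1}(\stackrel{\circ}{P_\sigma^{'}})$ coming from its embedding into $CQ(M^{2n},P^k)\times M^{2n}/T^k$ agrees with the product topology on $\stackrel{\circ}{P_\sigma^{'}}\times F_\sigma$ transported by $\Phi$. This follows formally from the fact that $h_\sigma$ is a homeomorphism (not merely a bijection), so that the product structure of $W_\sigma/T^k$ passes cleanly into the product structure on the fiber of $\widehat{G}$, without using any further hypothesis on $M^{2n}$ or on the polytope $P_\sigma$.
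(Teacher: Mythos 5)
Your argument is correct and matches the paper's own proof in substance: both identify $\widehat{G}^{-1}(x)$ over a single $x\in\stackrel{\circ}{P_\sigma'}$ with $F_\sigma$ via the trivialization of Corollary~\ref{homeom}, and both invoke the homeomorphism $\widehat{\pi}:P_\sigma'\to P_\sigma$ of Corollary~\ref{homeom2} to pass to the global product $\stackrel{\circ}{P_\sigma}\times F_\sigma$. The only difference is that you write out the explicit mutually inverse maps $\Phi,\Psi$ and spell out the continuity, whereas the paper does the fiber-by-fiber identification and then asserts the product structure; your version simply makes explicit what the paper leaves implicit.
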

\begin{proof}
For any point  $x\in P_{\sigma}^{'}\in CQ(M^{n}, P^{k})$ we have that $\widehat{G_1}^{-1}(x) = \{ (x, y) | y \in W_{\sigma}/T^k, \widehat{\mu} (y) = \pi (x)\}$. It follows from Corollary~\ref{homeom} that the space $\widehat{G_1}^{-1}(x)$ is homeomorphic to the space $F_{\sigma}$. Then Corollary~\ref{homeom2} implies that the space  $\widehat{G_1}^{-1}(P_{\sigma}^{'})$ is homeomorphic to the space $\stackrel{\circ}{P_{\sigma}}\times F_{\sigma}$.
\end{proof}

\begin{rem}
We want to emphasize that, according to Proposition~\ref{cfblocal}, the space $\mathfrak{E}(M^{2n}, P^{k})/T^k$,  that is the orbit space $M^{2n}/T^k$, is   the  union  of the  trivial fiber bundles $\stackrel{\circ}{P_{\sigma}}\times F_{\sigma}$, where $\sigma$ runs through the set $\mathfrak{S}$. Then,  according to Axiom~\ref{leaf},   the gluing of fibers  $F_{\sigma}$ and $F_{\bar{\sigma}}$ for  $\stackrel{\circ}{P_{\sigma}}$ and $\stackrel{\circ}{P_{\bar{\sigma}}} \subset \partial \stackrel{\circ}{P_{\sigma}}$ respectively,  is given by the map $\eta _{\sigma, \bar{\sigma}} : F_{\sigma}\to F_{\bar{\sigma}}$.
\end{rem}

%%%%%%%%%%%%%%%%%%%%   10

\section{The gluing of the  orbit spaces of strata  in $\mathfrak{E}(M^{2n}, P^k)/T^k$}
In the previous section we proved that the space  $\mathfrak{E}(M^{2n}, P^{k})/T^k$, that is the orbit space $M^{2n}/T^k$, is  the union of total spaces of the trivial fiber bundles $\stackrel{\circ}{P_{\sigma}}\times F_{\sigma}$, where  $\sigma \in \mathfrak{S}$.  In this section we want to describe how  the trivial bundles $\stackrel{\circ}{P_{\sigma}}\times F_{\sigma}$  are glued  together, or,  in other words,   to describe the  $\bar{\partial}$-boundary of $\stackrel{\circ}{P_{\sigma}}\times F_{\sigma}$. Recall that the $\bar{\partial}$- boundary of $\stackrel{\circ}{P_{\sigma}}\times F_{\sigma}$ is homeomorphic to the $\bar{\partial}$- boundary of the orbit space $W_{\sigma}/T^k$. Note that $\overline{W_{\sigma}/T^k} \cong \overline{W_{\sigma}}/T^k$, which implies that $\bar{\partial} (W_{\sigma}/T^k) \cong \bar{\partial}W_{\sigma}/T^k$.
Lemma~\ref{bound} immediately implies:
\begin{cor}
There is an embedding $\bar{\partial}(\stackrel{\circ}{P_{\sigma}}\times F_{\sigma})\subset \cup _{\tilde{\sigma}\subset \sigma}\stackrel{\circ}{P_{\tilde{\sigma}}}\times F_{\tilde{\sigma}}$, where $\tilde{\sigma}$ runs through all admissible subsets of the set $\sigma$.
\end{cor}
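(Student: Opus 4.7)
The plan is to transport Lemma~\ref{bound}, which describes boundaries upstairs in $M^{2n}$, down to the orbit space via the quotient projection, and then translate from the orbit-space language to the product description $\stackrel{\circ}{P_{\sigma}}\times F_{\sigma}$ via Corollary~\ref{homeom}. Concretely, I identify $\stackrel{\circ}{P_{\sigma}}\times F_{\sigma}$ with $W_{\sigma}/T^{k}$ through the structural homeomorphism $h_{\sigma}$, so that the boundary $\partial(\stackrel{\circ}{P_{\sigma}}\times F_{\sigma})$ taken inside $M^{2n}/T^k$ is just $\partial(W_{\sigma}/T^k)=\overline{W_{\sigma}/T^k}\setminus W_{\sigma}/T^k$.

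Next I relate this boundary in the orbit space to the boundary $\partial W_{\sigma}\subset M^{2n}$. Since $T^k$ is compact and $M^{2n}$ is Hausdorff, the orbit projection $\pi:M^{2n}\to M^{2n}/T^k$ is a closed map, so $\overline{W_{\sigma}/T^k}=\pi(\overline{W_{\sigma}})$. Because $W_{\sigma}$ is $T^k$-invariant, any class $\pi(x)\in\overline{W_{\sigma}/T^k}\setminus W_{\sigma}/T^k$ is represented by some $x\in\overline{W_{\sigma}}$ with $x\notin W_{\sigma}$, i.e.\ $x\in\partial W_{\sigma}$. Therefore
\[
\partial(W_{\sigma}/T^k)\;\subseteq\;\pi(\partial W_{\sigma}).
\]

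Now I invoke Lemma~\ref{bound}, which yields $\partial W_{\sigma}\subseteq\bigcup_{\tilde{\sigma}\subset\sigma}W_{\tilde{\sigma}}$ with the union ranging over admissible subsets. Applying $\pi$ and using $T^k$-invariance of each $W_{\tilde{\sigma}}$ gives
\[
\partial(W_{\sigma}/T^k)\;\subseteq\;\bigcup_{\tilde{\sigma}\subset\sigma}W_{\tilde{\sigma}}/T^k.
\]
Finally I apply Corollary~\ref{homeom} to each admissible $\tilde{\sigma}$ to rewrite $W_{\tilde{\sigma}}/T^k\cong\stackrel{\circ}{P_{\tilde{\sigma}}}\times F_{\tilde{\sigma}}$, which yields the claimed inclusion.

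The only step that is not completely formal is the commutation of closure with the quotient; but this is immediate from compactness of $T^k$ (so that $\pi$ is closed) together with $T^k$-invariance of $W_{\sigma}$, so there is no real obstacle here. The statement is essentially a direct packaging of Lemma~\ref{bound} and Corollary~\ref{homeom}, as indicated by the authors' phrasing ``immediately implies''.
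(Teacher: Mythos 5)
Your proof is correct and follows exactly the route the authors intend when they write ``Lemma~\ref{bound} immediately implies''; you simply make explicit the one non-formal step (closedness of the orbit projection $\pi$, so that $\partial(W_\sigma/T^k)\subseteq\pi(\partial W_\sigma)$), then apply Lemma~\ref{bound} upstairs and Corollary~\ref{homeom} on each piece. The paper itself gives no further detail, so your write-up is a faithful expansion of the same argument rather than an alternative.
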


\begin{rem}
We want to point that for  a general $(2n,k)$-manifold, there might exist an  admissible subsets $\tilde{\sigma}$ of a admissible set $\sigma$ for which the polytope  $P_{\tilde{\sigma}}$ is not a face of the polytope $P_{\sigma}$. Furthermore, $P_{\tilde{\sigma}}$ might  not belong to  the boundary of the polytope  $P_{\sigma}$. Therefore, we {\it specially}  denote by $\bar{\sigma}$ those admissible subsets of a admissible set $\sigma$ for which $P_{\bar{\sigma}}$ is a face of the polytope  $P_{\sigma}$.
\end{rem}

Note that, as we will  demonstrate in Section~\ref{Grassmann},
 the $\bar{\partial}$-boundary of the orbit space  $W_{\sigma}/T^k$ can not be, in general, represented as a union of the orbit spaces of some other strata. This implies that, in general, the $\bar{\partial}$-boundary of  the space  $\stackrel{\circ}{P_{\sigma}}\times F_{\sigma}$ in  $\mathfrak{E}(M^{2n}, P^{k})/T^k$ can not be represented as a union of total spaces of some other trivial fiber bundles.

As for the boundary of the polytopes in $CQ(M^{2n}, P^k)$ we have the following:
\begin{lem}\label{subset}
If  $ P_{\tilde{\sigma}}^{'}\subset \partial P_{\sigma}^{'}$ for some any $P_{\sigma}^{'},  P_{\tilde{\sigma}}^{'} \subset CQ(M^{2n}, P^k)$  then  $\tilde{\sigma}\subset \sigma$.
\end{lem}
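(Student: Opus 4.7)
The strategy is to transfer the inclusion $P_{\tilde{\sigma}}^{'} \subset \partial P_{\sigma}^{'}$ from the complex $CQ(M^{2n}, P^k)$ down to an inclusion of convex polytopes in the ambient polytope $P^k$ via the canonical projection $\widehat{\pi}$, and then read off the conclusion by combining the extremality of vertices of $P^k$ with the bijection $i \leftrightarrow v_i$ provided by Axiom~\ref{bij}.

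First, I would apply $\widehat{\pi} : CQ(M^{2n}, P^k) \to P^k$. By Corollary~\ref{homeom2} its restriction to $P_{\sigma}^{'}$ is a homeomorphism onto $P_{\sigma}$, so it sends $\stackrel{\circ}{P_{\sigma}^{'}}$ to $\stackrel{\circ}{P_{\sigma}}$ and consequently $\partial P_{\sigma}^{'}$ onto $\partial P_{\sigma}$. The hypothesis therefore projects to $P_{\tilde{\sigma}} = \widehat{\pi}(P_{\tilde{\sigma}}^{'}) \subseteq \widehat{\pi}(\partial P_{\sigma}^{'}) = \partial P_{\sigma} \subseteq P_{\sigma}$, i.e.\ an honest inclusion of convex polytopes in $P^k$.

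Next, I would identify the vertex set of each admissible polytope with its indexing set. By construction $P_{\sigma} = \text{convhull}\{v_{i} : i \in \sigma\}$, and by Axiom~\ref{bij} each $v_{i} = \mu(x_{i})$ is a vertex of $P^k$. Since a vertex of $P^k$ is an extreme point of $P^k$, it remains an extreme point of every convex subset of $P^k$ containing it; consequently each $v_{i}$ with $i \in \sigma$ is a vertex of $P_{\sigma}$, and the vertex set of $P_{\sigma}$ is exactly $\{v_{i} : i \in \sigma\}$. The same statement holds verbatim for $P_{\tilde{\sigma}}$.

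The conclusion is now immediate: from $P_{\tilde{\sigma}} \subseteq P_{\sigma}$ each $v_{i}$ with $i \in \tilde{\sigma}$ lies in $P_{\sigma}$, so by the extremality argument it is a vertex of $P_{\sigma}$, hence equals some $v_{j}$ with $j \in \sigma$; the injectivity of $i \mapsto v_{i}$ from Axiom~\ref{bij} gives $i = j$, so $\tilde{\sigma} \subseteq \sigma$. The main subtlety I foresee is the very first step, namely interpreting the topological condition "$P_{\tilde{\sigma}}^{'} \subset \partial P_{\sigma}^{'}$" in the quotient complex (which is built on a formal disjoint union) as the polytope-level inclusion $P_{\tilde{\sigma}} \subseteq \partial P_{\sigma}$ in $P^k$; once this is secured via the homeomorphism of Corollary~\ref{homeom2}, the remainder is pure extreme-point bookkeeping.
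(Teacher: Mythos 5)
Your first step contains a genuine error. The homeomorphism $\widehat{\pi}|_{P_{\sigma}^{'}} : P_{\sigma}^{'} \to P_{\sigma}$ of Corollary~\ref{homeom2} is a statement about the subspace topology on $P_{\sigma}^{'}$ itself, and it says nothing about $\partial P_{\sigma}^{'} = \overline{P_{\sigma}^{'}} \setminus P_{\sigma}^{'}$, which lies entirely \emph{outside} $P_{\sigma}^{'}$ and is computed in the ambient quotient topology of $CQ(M^{2n}, P^k)$. Because that topology fails to be $T_1$ (Lemma~\ref{T1}), the closure of a cell $P_{\sigma}^{'}$ can pick up cells $P_{\tilde{\sigma}}^{'}$ that project under $\widehat{\pi}$ into the relative interior $\stackrel{\circ}{P_{\sigma}}$ rather than into the face boundary $\partial P_{\sigma}$. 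Concretely, for $G_{4,2}$ with $P_{\sigma} = \Delta_{4,2}$: here $f^{-1}(P_{\sigma}^{'})$ is the main stratum $W$, the set $\partial W$ contains the strata over the interior diagonal squares, and the corresponding cells lie in $\partial P_{\sigma}^{'}$ yet project into $\stackrel{\circ}{\Delta_{4,2}}$. So your asserted equality $\widehat{\pi}(\partial P_{\sigma}^{'}) = \partial P_{\sigma}$ is false, and the homeomorphism of Corollary~\ref{homeom2} cannot secure it.

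The good news is that the weaker inclusion you actually need, $P_{\tilde{\sigma}} \subseteq P_{\sigma}$, does follow from continuity of $\widehat{\pi}$ alone: since $P_{\tilde{\sigma}}^{'} \subset \partial P_{\sigma}^{'} \subset \overline{P_{\sigma}^{'}}$ and $\widehat{\pi}(\overline{P_{\sigma}^{'}}) \subseteq \overline{\widehat{\pi}(P_{\sigma}^{'})} = P_{\sigma}$, one gets $\stackrel{\circ}{P_{\tilde{\sigma}}} \subseteq P_{\sigma}$ and hence $P_{\tilde{\sigma}} \subseteq P_{\sigma}$ because $P_{\sigma}$ is closed. Your extreme-point bookkeeping in the second half is correct and closes the argument from there. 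Once repaired, this is a genuinely different route from the paper's: the paper pulls back via the quotient map $f : M^{2n} \to CQ(M^{2n},P^k)$ to $\partial W_{\sigma}$ and invokes Lemma~\ref{bound}, landing directly on the index sets, whereas your route projects to $P^k$ and reads off $\tilde{\sigma} \subseteq \sigma$ from extremality of the vertices of $P^k$ combined with Axiom~\ref{bij}.
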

\begin{proof}
If  $P_{\tilde{\sigma}}^{'} \subset \partial P_{\sigma}^{'}$  it follows that  $f^{-1}( P_{\tilde{\sigma}}^{'})\subset f^{-1}(\partial P_{\sigma}^{'})$, where $f: M^{2n}\to CQ(M^{2n},P^k)$ is a quotient map. It implies that $f^{-1}(\partial P_{\sigma}^{'}) = \bar{\partial} f^{-1}( P_{\sigma}^{'}) = \bar{\partial} W_{\sigma}\subseteq \cup _{\tilde{\sigma}\subset \sigma}W_{\tilde{\sigma}}$ and, thus,  $f^{-1}( P_{\tilde{\sigma}}^{'}) = W_{\tilde{\sigma}}$ for some $\tilde{\sigma}\subset \sigma$.
\end{proof}

\begin{rem}
Recall that we already remarked that for any $P_{\bar{\sigma}}^{'}, P_{\sigma}^{'}\subset CQ(M^{2n}, P^{k})$,  such that $P_{\bar{\sigma}}$ is a face of the polytope $P_{\sigma}$, we have that  $P_{\bar{\sigma}}^{'} \subset \partial  P_{\sigma}^{'}$ in $CQ(M^{2n}, P^{k})$.
\end{rem}

At the end of this section we derive some results under the  additional assumption: if  $P_{\tilde{\sigma}}^{'}, P_{\sigma}^{'}\subset CQ(M^{2n}, P^k)$ and  $\partial P_{\sigma}^{'}\cap P_{\tilde{\sigma}}^{'}\neq \emptyset$ then  $P_{\tilde{\sigma}}^{'}\subset \partial P_{\sigma}^{'}$.

Then  Lemma~\ref{subset} implies:
\begin{cor}
The  boundary of any area $ P_{\sigma}^{'}\subset CQ(M^{2n}, P^k)$ is the union of some areas   $P_{\tilde{\sigma}}^{'}$ such that $\tilde{\sigma}\subset \sigma$.
\end{cor}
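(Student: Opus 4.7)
The statement follows almost immediately from the previous Lemma and the newly imposed hypothesis, once one uses that $CQ(M^{2n},P^k)$ is partitioned by the admissible polytopes. The plan is to decompose $\partial P_{\sigma}^{'}$ along this partition and then apply the hypothesis and Lemma~\ref{subset} to each piece.

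First I would write, since $CQ(M^{2n},P^k)= \bigsqcup_{\tau \in \mathfrak{S}} P_{\tau}^{'}$ as a set, the equality
\[
\partial P_{\sigma}^{'} \;=\; \partial P_{\sigma}^{'} \cap CQ(M^{2n},P^k) \;=\; \bigcup_{\tau\in\mathfrak{S}} \bigl(\partial P_{\sigma}^{'}\cap P_{\tau}^{'}\bigr).
\]
For each index $\tau$ for which the intersection $\partial P_{\sigma}^{'}\cap P_{\tau}^{'}$ is non-empty, the standing additional assumption in force in this subsection upgrades this to the inclusion $P_{\tau}^{'}\subset \partial P_{\sigma}^{'}$, so the non-trivial summands are precisely entire polytopes $P_{\tau}^{'}$ of the complex.

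Next, for every such $\tau$ I would invoke Lemma~\ref{subset}: the inclusion $P_{\tau}^{'}\subset \partial P_{\sigma}^{'}$ implies $\tau\subset\sigma$. Combining the two steps, $\partial P_{\sigma}^{'}$ is exhibited as the union of those $P_{\tilde\sigma}^{'}$ with $\tilde\sigma\subset\sigma$ for which $P_{\tilde\sigma}^{'}$ meets $\partial P_{\sigma}^{'}$, which is exactly the claim. The strictness $\tilde\sigma\neq\sigma$ is automatic because $\partial V=\overline V\setminus V$ in the conventions of Section~\ref{jedan}, so $\partial P_{\sigma}^{'}\cap P_{\sigma}^{'}=\emptyset$.

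I do not anticipate a genuine obstacle here: the argument is a one-line bookkeeping step. The only thing to be slightly careful about is to cite the precise form of the additional hypothesis (the ``all or nothing'' intersection property), and to make sure the indexing convention matches that of Lemma~\ref{subset}, whose proof already selected the relevant strict inclusion $\tilde\sigma\subset\sigma$ by pulling back through the quotient map $f:M^{2n}\to CQ(M^{2n},P^k)$.
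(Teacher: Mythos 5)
Your proof is correct and follows the argument the paper leaves implicit: it combines the standing ``all or nothing'' intersection hypothesis introduced just before the corollary (to see that $\partial P_{\sigma}^{'}$ decomposes as a union of whole polytopes of the complex) with Lemma~\ref{subset} (to see that each such polytope has index $\tilde\sigma\subset\sigma$). Your remark that strictness $\tilde\sigma\neq\sigma$ is automatic from the convention $\partial V = \overline V \setminus V$ is a correct and useful observation that the paper does not spell out.
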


From  previous results we also have the following direct consequence:

\begin{cor}
For any point  $x\in P_{\sigma}^{'}$ it holds   $\overline{x} = \{y \in  P_{\tilde{\sigma}}^{'} \; | \:   P_{\tilde{\sigma}}^{'}\subset \partial P_{\sigma}^{'},\; \widehat{\pi}(y) =\widehat{\pi}(x)\}$.
\end{cor}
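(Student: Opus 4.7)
I would work through the specialization preorder characterization of closure in the $T_0$ quotient space $CQ(M^{2n},P^k)$: $y\in\overline{x}$ iff every open $V\ni y$ contains $x$. Two facts reduce the problem substantially: (i) the continuity of $\widehat{\pi}:CQ(M^{2n},P^k)\to P^k$ combined with the Hausdorffness of $P^k$ forces $\widehat{\pi}(y)=\widehat{\pi}(x)$ for all $y\in\overline{x}$; (ii) the proof of Lemma~\ref{subset} identifies $f^{-1}(P_\sigma^{'})=\overline{W_\sigma}$, so $P_\sigma^{'}$ is closed in $CQ(M^{2n},P^k)$, and hence $\overline{x}\subseteq P_\sigma^{'}$.

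For the inclusion $\subseteq$: given $y\in\overline{x}$, the two observations above place $y$ in $P_\sigma^{'}$ with $\widehat{\pi}(y)=\widehat{\pi}(x)$. Injectivity of the homeomorphism $\widehat{\pi}:P_\sigma^{'}\to P_\sigma$ from Corollary~\ref{homeom2} then yields either $y=x$ or $y\in\partial P_\sigma^{'}$. In the latter case, the standing assumption of the Section (any cell meeting $\partial P_\sigma^{'}$ is entirely contained in it) places $y$ in a complete closed face cell $P_{\tilde\sigma}^{'}\subset\partial P_\sigma^{'}$ with $\widehat{\pi}(y)=\widehat{\pi}(x)$.

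For the inclusion $\supseteq$: given $y\in P_{\tilde\sigma}^{'}\subset\partial P_\sigma^{'}$ with $\widehat{\pi}(y)=\widehat{\pi}(x)$, I would verify $y\in\overline{x}$ by showing that every open $V\ni y$ in $CQ(M^{2n},P^k)$ has $x\in V$, equivalently that every $f$-saturated open $U\subseteq M^{2n}$ containing $f^{-1}(y)$ contains $f^{-1}(x)$. The inclusion $P_{\tilde\sigma}^{'}\subset\partial P_\sigma^{'}$ gives $W_{\tilde\sigma}\subset\partial W_\sigma$. For each leaf $W_\sigma[\xi_\sigma,c_\sigma]$, Axiom~\ref{leaf} together with Corollary~\ref{all} identifies its boundary leaf in $W_{\tilde\sigma}$ through the map $\eta_{\sigma,\tilde\sigma}$, and Corollary~\ref{cl-leaf} identifies the leaf closure $\overline{W_\sigma[\xi_\sigma,c_\sigma]/T^\sigma}$ homeomorphically with $P_\sigma$. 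Tracking the preimage of $\widehat{\pi}(x)=\widehat{\pi}(y)$ through this homeomorphism and using the openness of $U$ around the boundary orbit, one catches a single orbit of $f^{-1}(x)$ inside $U$; the $f$-saturation of $U$ then promotes this single orbit to the full fiber $f^{-1}(x)\subseteq U$.

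\textbf{Main obstacle.} The delicate step is the $\supseteq$ direction: extracting from an arbitrary open neighborhood of the boundary orbits in $W_{\tilde\sigma}$ a level-set orbit of the adjacent leaf of $W_\sigma$ at the precise level $\widehat{\pi}(x)$, rather than only orbits with nearby $\mu$-values. The leaf-closure homeomorphism of Corollary~\ref{cl-leaf} and the bookkeeping provided by $\eta_{\sigma,\tilde\sigma}$ are the decisive tools, and the key simplification is that $f$-saturation collapses the requirement from ``contains all orbits of $f^{-1}(x)$'' to ``contains one orbit of $f^{-1}(x)$''.
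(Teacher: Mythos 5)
The paper offers no written proof for this corollary (it is flagged as a ``direct consequence''), so I judge the proposal on its own merits. Your $\subseteq$ direction is essentially sound, once one reads $P_\sigma^{'}$ as $\overline{\stackrel{\circ}{P_\sigma^{'}}}$ and invokes the Section's standing assumption to know that $\overline{W_\sigma}$ is a union of strata and hence $\overline{\stackrel{\circ}{P_\sigma^{'}}}$ is closed in $CQ$. The argument via continuity of $\widehat{\pi}$ and Hausdorffness of $P^k$, plus injectivity of $\widehat{\pi}$ on the cell $\stackrel{\circ}{P_\sigma^{'}}$, is exactly what is needed.

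The $\supseteq$ direction has a genuine gap, and it sits precisely at the spot you flag as the main obstacle. You invoke the maps $\eta_{\sigma,\tilde\sigma}$, Corollary~\ref{all}, and Corollary~\ref{cl-leaf} to attach the orbit $f^{-1}(y)$ to boundary leaves of leaves of $W_\sigma$. But all of these are defined only when $P_{\tilde\sigma}$ is a \emph{face} of $P_\sigma$: Corollary~\ref{all} says the boundary of a leaf is supported exactly over $\partial P_\sigma$, and Corollary~\ref{cl-leaf} gives $\overline{W_\sigma[\xi_\sigma,c_\sigma]/T^\sigma}\cong P_\sigma$ with the boundary leaves sitting over $\partial P_\sigma$. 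In your situation, however, $\widehat{\pi}(y)=\widehat{\pi}(x)$ with $x\in\stackrel{\circ}{P_\sigma^{'}}$ and $y\in\stackrel{\circ}{P_{\tilde\sigma}^{'}}$ forces $\widehat{\pi}(x)\in\stackrel{\circ}{P_{\tilde\sigma}}\cap\stackrel{\circ}{P_\sigma}$; so $P_{\tilde\sigma}$ is emphatically \emph{not} a face of $P_\sigma$, and the orbit $f^{-1}(y)$ is \emph{not} in the boundary of any single leaf of $W_\sigma$ (leaf boundaries map to $\partial P_\sigma$, whereas $\mu(f^{-1}(y))\in\stackrel{\circ}{P_\sigma}$). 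Thus your mechanism only ``proves'' the vacuous face case, and says nothing about the nontrivial one. What is actually needed is a density statement of the type~\eqref{param-cl}: that the fiber $\widehat{\mu}^{-1}(\widehat{\pi}(x))\cap W_\sigma/T^{\sigma}$ is dense in $\widehat{\mu}^{-1}(\widehat{\pi}(x))\cap\overline{W_\sigma/T^{\sigma}}$. That is exactly the tool the paper reaches for in the analogous argument (Lemma~\ref{T1}, for $\sigma$ the main stratum), where a point of the main-stratum fiber is found inside any saturated neighborhood of a boundary orbit \emph{because} the main-stratum fiber is dense in the full $\widehat{\mu}$-fiber. Your observation that $f$-saturation collapses ``contains all orbits of $f^{-1}(x)$'' to ``contains one orbit of $f^{-1}(x)$'' is a good and correct simplification, but the remaining step of producing that one orbit is left unproved; the leaf machinery cannot supply it and the density argument must be adapted to a general stratum $W_\sigma$, which the proposal does not do.
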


The  closure of the space of parameters $F_{\sigma}$  for  $P_{\sigma}^{'}$ can be described as follows:

\begin{lem}
For any point $x\in P_{\sigma}^{'}$ it holds: 
\[
\overline{x\times F_{\sigma}}   \subseteq \cup _{y\in \overline{x}}(y\times F_{\tilde{\sigma}}).
\]
\end{lem}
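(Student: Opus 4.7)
The plan is to treat the equality and the inclusion separately. For the equality $\overline{\{x\}\times F_\sigma}=\overline{x}\times \overline{F_\sigma}$, I would invoke the elementary point-set identity $\overline{A\times B}=\overline{A}\times \overline{B}$ in the ambient product $CQ(M^{2n},P^k)\times M^{2n}/T^k$, taking $A=\{x\}$ and $B=F_\sigma$ realized as the fiber of the trivial bundle $\widehat{G}^{-1}(P_\sigma^{'})\cong \stackrel{\circ}{P_\sigma}\times F_\sigma$ provided by Proposition~\ref{cfblocal}. Since $\mathfrak{E}(M^{2n},P^k)/T^k$ carries the subspace topology, the closure computed there is the intersection of the product closure with $\mathfrak{E}$; the argument for the inclusion below will verify that this intersection is all of $\overline{x}\times \overline{F_\sigma}$, so no information is lost.

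For the inclusion, I would pick an arbitrary $(y,f)\in \overline{x}\times \overline{F_\sigma}$. By the corollary describing $\overline{x}$ proved just above, $y$ lies in $\stackrel{\circ}{P_{\tilde{\sigma}}^{'}}$ for a unique admissible $P_{\tilde{\sigma}}^{'}\subset \partial P_\sigma^{'}$ with $\widehat{\pi}(y)=\widehat{\pi}(x)$; Lemma~\ref{subset} combined with the standing assumption of this section forces $\tilde{\sigma}\subseteq \sigma$. Transporting $(y,f)$ to $M^{2n}/T^k$ through the homeomorphism $\widehat{F}$ of Theorem~\ref{ME}, it becomes a limit of orbits in a single leaf $W_\sigma[\xi_\sigma,c_\sigma]/T^\sigma$. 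By Axiom~\ref{leaf}, strengthened by Corollary~\ref{all}, the boundary of such a leaf is the union of leaves $W_{\bar{\sigma}}[\xi_{\bar{\sigma}},\eta_{\sigma,\bar{\sigma}}(c_\sigma)]/T^{\bar{\sigma}}$ over \emph{all} faces $P_{\bar{\sigma}}$ of $P_\sigma$. Taking $\bar{\sigma}=\tilde{\sigma}$ places $\widehat{F}(y,f)$ inside a leaf parametrized by some $c_{\tilde{\sigma}}\in F_{\tilde{\sigma}}$, and Proposition~\ref{cfblocal} then identifies the fiber of $\widehat{G}$ over $y$ with $\{y\}\times F_{\tilde{\sigma}}$, which forces $f\in F_{\tilde{\sigma}}$. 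In particular $(y,f)\in \mathfrak{E}$, which retroactively legitimizes the product-closure argument of the first step.

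The principal obstacle is to guarantee that $\overline{F_\sigma}$ does not pick up stray elements from $F_{\sigma'}$ for non-face admissible polytopes $P_{\sigma'}$ that happen to meet $\overline{x}$; without such control, the inclusion on the right-hand side could fail and the $\eta$-maps of~\eqref{eta} would be insufficient to describe the gluing. This is precisely ruled out by the standing assumption of the section: if $\partial P_\sigma^{'}\cap P_{\sigma'}^{'}\neq\emptyset$ then $P_{\sigma'}^{'}\subset \partial P_\sigma^{'}$, whence Lemma~\ref{subset} upgrades to $\sigma'\subseteq \sigma$ and only genuine faces of $P_\sigma$ contribute to the closure. As noted before the lemma, in the absence of this assumption (as for the $G_{7,3}$ example alluded to earlier), the claim genuinely fails, so this is where the hypothesis is essential; once it is in force, the remainder is bookkeeping indexed by the face lattice of $P_\sigma$ and the structure maps $\eta_{\sigma,\bar{\sigma}}:F_\sigma\to F_{\bar{\sigma}}$.
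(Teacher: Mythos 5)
The paper's proof starts from a point $(y,c)\in\overline{x\times F_\sigma}$, where the closure is taken inside $\mathfrak{E}(M^{2n},P^k)/T^k$, so that $(y,c)$ is automatically a legitimate element of $\mathfrak{E}/T^k$; it then observes that $\widehat{F}(y,c)=c$ lies in $\overline{W_\sigma}/T^k$, that $\widehat f(c)=y$ lands in some $P_{\tilde\sigma}^{'}\subset\partial P_\sigma^{'}$, and that $y\in\overline{x}$, giving the inclusion directly. You instead start from an arbitrary $(y,f)\in\overline{x}\times\overline{F_\sigma}$ taken in the ambient product, which is a genuinely stronger thing to prove (it is what one would need to justify the equality, which the paper's own proof does not actually establish), and try to push it into $\mathfrak{E}/T^k$. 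That ambition is reasonable, but the argument has a concrete error.

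The flaw is in the sentence asserting that $\widehat{F}(y,f)$ ``becomes a limit of orbits in a single leaf $W_\sigma[\xi_\sigma,c_\sigma]/T^\sigma$.'' The fiber $\{x\}\times F_\sigma$ of $\widehat{G}$ over $x$ is \emph{transversal} to the leaves of $W_\sigma$: under the trivialization $W_\sigma/T^\sigma\cong \stackrel{\circ}{P_\sigma}\times F_\sigma$, a leaf is a slice $\stackrel{\circ}{P_\sigma}\times\{c_\sigma\}$, while the fiber over $x$ is $\{x\}\times F_\sigma$, meeting each leaf in exactly one point. So $f\in\overline{F_\sigma}$ is a limit of points ranging across all leaves, not confined to one of them, and Corollary~\ref{all} (which describes the boundary of a single leaf) does not apply to $f$. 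The subsequent invocations — ``taking $\bar\sigma=\tilde\sigma$ places $\widehat{F}(y,f)$ inside a leaf parametrized by some $c_{\tilde\sigma}$'' — therefore do not follow. There is also the more basic problem that $\widehat{F}$ is only defined on $\mathfrak{E}/T^k$, so you cannot ``transport $(y,f)$ through $\widehat{F}$'' before having established that $(y,f)$ is in $\mathfrak{E}/T^k$; the paper sidesteps this by taking $(y,c)$ from the subspace closure to begin with.

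What you would actually need to close the circle is an argument that the stratum $\sigma''$ with $f\in W_{\sigma''}/T^k$ equals the stratum $\tilde\sigma$ with $y\in\stackrel{\circ}{P_{\tilde\sigma}^{'}}$; both satisfy $\widehat{\pi}(x)\in\stackrel{\circ}{P_{\sigma''}}\cap\stackrel{\circ}{P_{\tilde\sigma}}$ and $\sigma'',\tilde\sigma\subseteq\sigma$, but those conditions alone do not force them to coincide (think of an exceptional point in the interior of $P_\sigma$). Your remarks about the standing assumption of the section and the $G_{7,3}$ pathology are on target and show you have correctly located where the hypothesis bites, but the leaf/fiber confusion prevents the argument from going through; the paper's proof avoids the issue by proving only the weaker inclusion from the left-hand closure, for which no leaf-level reasoning is needed.
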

\begin{proof}
Let  $(y, c) \in \overline{x\times F_{\sigma}}$. . Then  $\widehat{G}_{2}(y,c) = a \in M^{2n}/T^k$ is  such a  point  that $a\in
\overline{W_{\sigma}}/T^k$ and $\widehat{f}(a) = y \in P_{\tilde{\sigma}}^{'} $, where  $P_{\tilde{\sigma}}^{'}\subset \partial P_{\sigma}^{'}$ and $y\in \overline{x}$.
\end{proof}

\begin{prop}\label{boundtr}
There is an embedding $\bar{\partial} (\stackrel{\circ}{P_{\sigma}}\times F_{\sigma})\subset \cup \stackrel{\circ}{P_{\tilde{\sigma}}}\times F_{\tilde{\sigma}}$,
where $\tilde{\sigma}$ runs through all admissible sets such that  $P_{\tilde{\sigma}}^{'}\subset \partial P_{\sigma}^{'}$
\end{prop}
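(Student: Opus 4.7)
The strategy is to pick a boundary point of $\stackrel{\circ}{P_\sigma}\times F_\sigma$ inside $\mathfrak{E}(M^{2n},P^k)/T^k$, identify which stratum piece it belongs to, and then invoke the running assumption of this section to conclude that the index of that piece labels a subpolytope of $\partial P_\sigma^{'}$.

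\textbf{Step 1 (decomposition).} Combining Proposition~\ref{cfblocal} with the disjointness of the stratification $M^{2n}=\sqcup_{\tau\in\mathfrak{S}}W_\tau$, I would first observe that $\mathfrak{E}(M^{2n},P^k)/T^k$ is the disjoint union $\bigsqcup_{\tau\in\mathfrak{S}}\stackrel{\circ}{P_\tau}\times F_\tau$. Therefore every $(y,c)\in\partial(\stackrel{\circ}{P_\sigma}\times F_\sigma)$ lies in exactly one piece $\stackrel{\circ}{P_{\tilde{\sigma}}}\times F_{\tilde{\sigma}}$ with $\tilde{\sigma}\in\mathfrak{S}$, $\tilde{\sigma}\neq\sigma$. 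It is enough to show that the polytope $P_{\tilde{\sigma}}^{'}$ corresponding to this piece satisfies $P_{\tilde{\sigma}}^{'}\subset\partial P_\sigma^{'}$.

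\textbf{Step 2 (project via $\widehat{G}$).} The continuous projection $\widehat{G}:\mathfrak{E}(M^{2n},P^k)/T^k\to CQ(M^{2n},P^k)$ from~\eqref{pr} sends $(y,c)$ to $y$. Since $(y,c)$ is a limit of points in $\stackrel{\circ}{P_\sigma^{'}}\times F_\sigma$, continuity of $\widehat{G}$ forces $y\in\overline{\stackrel{\circ}{P_\sigma^{'}}}$ in the CQ topology. Because $y\in\stackrel{\circ}{P_{\tilde{\sigma}}^{'}}$ and the open cells $\stackrel{\circ}{P_\sigma^{'}}$, $\stackrel{\circ}{P_{\tilde{\sigma}}^{'}}$ are disjoint pieces of $CQ(M^{2n},P^k)$, we have $y\notin\stackrel{\circ}{P_\sigma^{'}}$. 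Hence $y\in\partial P_\sigma^{'}$, so $P_{\tilde{\sigma}}^{'}\cap\partial P_\sigma^{'}\neq\emptyset$.

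\textbf{Step 3 (apply the standing assumption) and expected obstacle.} Once $P_{\tilde{\sigma}}^{'}\cap\partial P_\sigma^{'}\neq\emptyset$ is established, the additional assumption fixed at the beginning of this section, namely that $P_{\tilde{\sigma}}^{'}\cap\partial P_\sigma^{'}\neq\emptyset$ implies $P_{\tilde{\sigma}}^{'}\subset\partial P_\sigma^{'}$, yields exactly the desired conclusion, so $(y,c)\in\stackrel{\circ}{P_{\tilde{\sigma}}}\times F_{\tilde{\sigma}}$ with $P_{\tilde{\sigma}}^{'}\subset\partial P_\sigma^{'}$. The substantive step here is precisely the passage from $y\in\partial P_\sigma^{'}$ to $P_{\tilde{\sigma}}^{'}\subset\partial P_\sigma^{'}$; this inclusion fails in general (as illustrated in the $G_{7,3}$ remark) and is exactly what the section-wide assumption is designed to supply. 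The remaining ingredients, the disjoint decomposition of $\mathfrak{E}/T^k$ and continuity of $\widehat{G}$, are straightforward consequences of the structural results already proved.
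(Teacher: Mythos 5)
Your proposal is correct and follows essentially the same route as the paper: pass to $CQ(M^{2n},P^k)$ via the projection (you use $\widehat{G}$ directly, the paper factors through $\widehat{F}^{-1}$ and then $\widehat{f}$, but $\widehat{G}=\widehat{f}\circ\widehat{F}$ so these coincide), conclude the first coordinate lies in $\partial P_\sigma^{'}$, and invoke the section's standing assumption. You are in fact slightly more explicit than the paper, which shows only that $\widehat{f}(b)\in\partial P_\sigma^{'}$ and leaves the final appeal to the standing assumption unstated.
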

\begin{proof}
Let, as before, $\widehat{f} : M^{2n}/T^k\to CQ(M^{2n}, P^k)$ be a quotient map and let a point $a\in  \mathfrak{E}(M^{2n}, P^{k})/T^k$ belongs to the $\bar{\partial}$-boundary of the space  $\stackrel{\circ}{P_{\sigma}}\times F_{\sigma}$. This means that the point $b=\widehat{F}^{-1}(a)$ belongs to the $\bar{\partial}$-boundary of the space $W_{\sigma}/T^k$. It implies that $\widehat{f}(b)$ belongs to the boundary of the polytope 
$P_{\sigma}^{'}$ in $CQ(M^{2n}, P^k)$.
\end{proof}

We can say more, that is which points form the union $\cup \stackrel{\circ}{P_{\tilde{\sigma}}}\times F_{\tilde{\sigma}}$ given by Proposition~\ref{boundtr} are for sure contained in the $\bar{\partial}$-boundary of the space  $\stackrel{\circ}{P_{\sigma}}\times F_{\sigma}$.

Let $\bar{\sigma}\subset \sigma$ be such a subset  that $P_{\bar{\sigma}}^{'}$ is a face of the polytope $P_{\sigma}^{'}$ and let $\eta _{\sigma, \bar{\sigma}} : F_{\sigma} \to F_{\bar{\sigma}}$ is a map introduced by~\eqref{eta}.  Put $F_{\sigma, \bar{\sigma}} = \eta _{\sigma, \bar{\sigma}}(F_{\sigma})$.

\begin{lem}
 For any $\bar{\sigma}\subset \sigma$, such that $P_{\bar{\sigma}}^{'}$ is a face of  the polytope $P_{\sigma}^{'}$, there is an embedding $\stackrel{\circ}{P_{\bar{\sigma}}}\times F_{\sigma, \bar{\sigma}}\subset \bar{\partial} (\stackrel{\circ}{P_{\sigma}}\times F_{\sigma})$.
\end{lem}
\begin{proof}
If  $P_{\bar{\sigma}}\times  c_{\bar{\sigma}}\subset  \stackrel{\circ}{P_{\bar{\sigma}}}\times F_{\sigma, \bar{\sigma}}$ then there exists a point $c_{\sigma}\in F_{\sigma}$ such that $\eta _{\sigma, \bar{\sigma}}(c_{\sigma}) = c_{\bar{\sigma}}$. It  means  that the $\bar{\partial}$-boundary of the leaf    $W_ {[\xi_{\sigma}, c_{\sigma}]}$  contains the leaf $W_ {[\xi_{\bar{\sigma}}, c_{\bar{\sigma}}]}$. It implies that  $W_ {[\xi_{\bar{\sigma}}, c_{\bar{\sigma}}]}/T^k \subset \bar{\partial} W_ {[\xi_{\sigma}, c_{\sigma}]}/T^k$, thus 
$\stackrel{\circ}{P_{\bar{\sigma}}}\times c_{\bar{\sigma}}\subset \bar{\partial} (\stackrel{\circ}{ P_{\sigma}}\times c_{\sigma})$.
\end{proof}

As for the points from $\stackrel{\circ}{P_{\tilde{\sigma}}}\times F_{\tilde{\sigma}}$, where $P_{\tilde{\sigma}}^{'}\subset \partial P_{\sigma}^{'}$,
we prove the following:
\begin{lem}
 Let $y\in P_{\tilde{\sigma}}^{'}$ where $P_{\tilde{\sigma}}^{'}\subset \partial P_{\sigma}^{'}$. Then there exists a point  $c_{\tilde{\sigma}}\in F_{\tilde{\sigma}}$ such that $(y, c_{\tilde{\sigma}})$ belongs to the $\bar{\partial}$-boundary of  the space $\stackrel{\circ}{P_{\sigma}}\times F_{\sigma}$.
\end{lem}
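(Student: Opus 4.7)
The plan is to transport the question across the homeomorphism $\widehat{F}\colon \mathfrak{E}(M^{2n},P^k)/T^k \to M^{2n}/T^k$ (supplied by Theorem~\ref{ME} together with the $T^k$-equivariance remarks that follow it), so that the boundary of the ``cell'' $\stackrel{\circ}{P_{\sigma}}\times F_{\sigma}$ gets identified with the topological boundary of $W_{\sigma}/T^k$ inside $M^{2n}/T^k$. Then the statement becomes a statement about closures of strata, which I can read off from the relation $W_{\tilde{\sigma}}\subset \partial W_{\sigma}$ coming from Lemma~\ref{subset}.

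First I would record the translation step. By hypothesis $P_{\tilde{\sigma}}^{'}\subset \partial P_{\sigma}^{'}$ inside $CQ(M^{2n},P^k)$. Pulling back by the quotient map $f\colon M^{2n}\to CQ(M^{2n},P^k)$ exactly as in the proof of Lemma~\ref{subset} gives $W_{\tilde{\sigma}}=f^{-1}(P_{\tilde{\sigma}}^{'})\subset f^{-1}(\partial P_{\sigma}^{'})=\partial W_{\sigma}$. Since the $T^k$-action is free on each orbit of the compact group action (and $T^k$ is compact, so the orbit projection is closed), this descends to $W_{\tilde{\sigma}}/T^k\subset \overline{W_{\sigma}/T^k}$ inside $M^{2n}/T^k$.

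Next I would produce a candidate point in $M^{2n}/T^k$ that is the image of the desired pair. By Axiom~\ref{fiber}, $\widehat{\mu}(W_{\tilde{\sigma}}/T^k)=\stackrel{\circ}{P_{\tilde{\sigma}}}$, and $\widehat{\pi}(y)\in \stackrel{\circ}{P_{\tilde{\sigma}}}$, so there exists $p\in W_{\tilde{\sigma}}/T^k$ with $\widehat{\mu}(p)=\widehat{\pi}(y)$. Using the fixed trivialization $h_{\tilde{\sigma}}$ (Corollary~\ref{homeom}) write $p$ as $(\widehat{\pi}(y),c_{\tilde{\sigma}})\in \stackrel{\circ}{P_{\tilde{\sigma}}}\times F_{\tilde{\sigma}}$; this defines the sought parameter $c_{\tilde{\sigma}}\in F_{\tilde{\sigma}}$. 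Because $p\in W_{\tilde{\sigma}}/T^k\subset \overline{W_{\sigma}/T^k}$, there is a sequence $p_n\in W_{\sigma}/T^k$ with $p_n\to p$.

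Finally I would push the limit back into $\mathfrak{E}(M^{2n},P^k)/T^k$ using $\widehat{F}^{-1}$. Writing $p_n$ in the trivialization of $W_{\sigma}/T^k$ as $(\widehat{\mu}(p_n),d_n)\in \stackrel{\circ}{P_{\sigma}}\times F_{\sigma}$, the point $\widehat{F}^{-1}(p_n)$ lives in $\widehat{G}^{-1}(\stackrel{\circ}{P_{\sigma}^{'}})\cong \stackrel{\circ}{P_{\sigma}}\times F_{\sigma}$ by Proposition~\ref{cfblocal}. Continuity of $\widehat{F}^{-1}$ (a homeomorphism) gives $\widehat{F}^{-1}(p_n)\to \widehat{F}^{-1}(p)$, and this limit is precisely $(y,c_{\tilde{\sigma}})$, because $\widehat{\pi}\colon \stackrel{\circ}{P_{\tilde{\sigma}}^{'}}\to \stackrel{\circ}{P_{\tilde{\sigma}}}$ is a homeomorphism (Corollary~\ref{homeom2}) and both $\widehat{F}^{-1}(p)$ and $y$ project to $\widehat{\pi}(y)$. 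Since $y\notin \stackrel{\circ}{P_{\sigma}^{'}}$, the pair $(y,c_{\tilde{\sigma}})$ lies in $\partial(\stackrel{\circ}{P_{\sigma}}\times F_{\sigma})$, which completes the argument.

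The only mildly delicate step is the bookkeeping identification of $\widehat{F}^{-1}(p)$ with $(y,c_{\tilde{\sigma}})$; everything else is a direct use of Axiom~\ref{fiber}, Lemma~\ref{subset}, and the fact that $\widehat{F}$ is a homeomorphism between compact Hausdorff spaces.
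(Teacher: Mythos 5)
Your proof is correct and follows essentially the same route as the paper's: pull back through the quotient map $f$ (as in Lemma~\ref{subset}) to get $W_{\tilde\sigma}\subset\partial W_\sigma$, pass to orbit spaces, then use the surjectivity of $\widehat{\mu}$ on $W_{\tilde\sigma}/T^\sigma$ and the identification of $\partial(\stackrel{\circ}{P_\sigma}\times F_\sigma)$ with $\partial(W_\sigma/T^k)$ under $\widehat{F}$ to produce the parameter $c_{\tilde\sigma}$. The paper states this more tersely; your version simply makes the translation step through $\widehat{F}$ explicit via a convergent sequence (the throwaway remark that the $T^k$-action is ``free on each orbit'' is inaccurate and unnecessary --- what you actually need, and also state, is that the orbit projection is a closed map because $T^k$ is compact).
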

\begin{proof}
If  $P_{\tilde{\sigma}}^{'}\subset \partial P_{\sigma}^{'}$ then $\widehat{f}^{-1}(P_{\tilde{\sigma}}^{'}) \cap \bar{\partial} \widehat{f}^{-1}( P_{\sigma}^{'}) \neq \emptyset$.  It implies that  $W_{\tilde{\sigma}}/T^{\sigma} \cap \bar{\partial}  W_{\sigma}/T^k \neq \emptyset$ and, moreover, that $P_{\tilde{\sigma}}^{'}\subset \widehat{f} (W_{\tilde{\sigma}}/T^k \cap  \bar{\partial} W_{\sigma}/T^k)$, which means that for any point $y\in P_{\tilde{\sigma}}^{'}$ there exists a point $c_{\tilde{\sigma}}\in F_{\tilde{\sigma}}$ such that $(y, c_{\tilde{\sigma}})\in \partial (\stackrel{\circ}{P_{\sigma}}\times F_{\sigma})$.
\end{proof}

%%%%%%%%%%%%%%%%%%  11

\section{A universal space of parameters}\label{universal}\label{Axiom-Univ}
In the theory of $(2n,k)$-manifolds there is an  effect, for  which we found an example in~\cite{GS}. It is about that there exists an $(2n,k)$-manifold $M^{2n}$ and  strata $W_{\sigma}, W_{\sigma ^{'}}\subset M^{2n}$ such that $W_{\sigma ^{'}}\cap \bar{\partial} W_{\sigma}\neq \emptyset$,  but $W_{\sigma^{'}}\not\subset \bar{\partial} W_{\sigma}$. The realization of this effect we elaborate      in Section~\ref{Grassmann} for the case  $M^{24} = G_{7,3}$. Note that according to~\cite{GS} this  effect does not appear in the case of $(4(n-2), n-1)$-manifolds $G_{n,2}$, although they are manifolds of the complexity  $n-3$. The manifolds $G_{n,2}$ and the orbits spaces $G_{n,2}/T^n$ are in the focus of attention due to the paper of Kapranov~\cite{Kap}. The considered  effect (we call it  Gel'fand-Serganova effect)  shows that  the description of the equivariant structure of $(2n,k)$-manifolds is a quite difficult problem. In~\cite{BTN} we proposed an approach for the   solution of this problem for the  Grassmann manifolds that  is   based on the  notion of a universal space  of parameters. This new notion we formalize    for $(2n,k)$-manifolds by the following axiom .

We recall that we use the following notation. For an admissible set $\sigma$ we denote by $\bar{\sigma}\subset \sigma$ such an    admissible set $\bar{\sigma}$ that $P_{\bar{\sigma}}$ is a face of $P_{\sigma}$. We denote by $F_{\sigma}$ the space of parameters of a stratum $W_{\sigma}$ and by $F_{i}$ the space of parameters of the stratum $W_{i}$ that  consists of $i$-th  fixed point.

\begin{ax}\label{universal}
There exists a topological space $\mathcal{F}$,   for any   $\sigma \in \mathfrak{S}$  there exist  topological spaces  $\tilde{F_{\sigma}}$ and  continuous  inclusions $I_{\sigma} : \tilde{F}_{\sigma}\to \mathcal{F}$ such that
\begin{itemize}
\item [a)] $\tilde{F} = F$  the space of parameters of the main stratum and   $\mathcal{F}$ is a compactification of $I(F)$,
\item [b)] $I_{\sigma}(\tilde{F_{\sigma}}) \subset I_{\bar{\sigma}}(\tilde{F}_{\bar{\sigma}})$ and $\mathcal{F} = \bigcup \limits_{i=1}^{m}I_{i}(\tilde{F}_{i})$,
\item [c)] for any $\sigma\in  \mathfrak{S}$ there exist continuous  projections $p_{\sigma} : \tilde{F}_{\sigma}\to F_{\sigma}$  such that  $p_{\bar{\sigma}} \circ \tilde{\eta} _{\sigma, \bar{\sigma}} = \eta _{\sigma, \bar{\sigma}} \circ p_{\sigma}$, where $\tilde{\eta}_{\sigma, \bar{\sigma}} : \tilde{F}_{\sigma} \to \tilde{F}_{\bar{\sigma}}$  is an inclusion given by the condition b).
\item [e)]  the map $H : \mathcal{E} = \cup _{\sigma}P^{'}_{\sigma}\times \tilde{F}_{\sigma} \to \mathfrak{E}(M^{2n}, P^k)/T^k$ defined by $H(x_{\sigma}, \tilde{c}_{\sigma}) = (x_{\sigma}, p_{\sigma}(\tilde{c}_{\sigma}))$ is a continuous map, where a  topology on $\mathcal{E}$ is  induced   by the embedding $\mathcal{E} \to CQ(M^{2n}, P^k)\times \mathcal{F}$.
\end{itemize}
\end{ax}

\begin{defn}
The space $\mathcal{F}$ is said to be the universal space of parameters and the spaces $\tilde{F}_{\sigma}$, $\sigma\in \mathfrak{S}$ are said to be the virtual spaces of parameters.
\end{defn}

It follows from Axiom~\ref{universal} that the orbit space $M^{2n}/T^k$ can be described in terms of the structural elements of $(2n,k)$-manifolds defined by our six  axioms.

\begin{thm}\label{mainm}
For any $(2n,k)$-manifold $M^{2n}$ the  orbit space $M^{2n}/T^k$  is homeomorphic to a quotient space of the space $\mathcal{E}$ by an equivalence relation $\approx$ such that  the map $H$ defines the homeomorphism $\hat{H} : \mathcal{E}/\approx \to \mathfrak{E}(M^{2n}, P^{k})/T^{k}$.
\end{thm}

\begin{proof}
The orbit space $M^{2n}/T^k$ is homeomorphic to $\mathfrak{E}(M^{2n},P^k)/T^k$ and the map $H : \mathcal{E} = \cup _{\sigma}P_{\sigma}^{'}\times \tilde{F}_{\sigma} \to \mathfrak{E}(M^{2n}, P^k)/T^k$ is surjective. Since $ \mathfrak{E}(M^{2n}, P^k)/T^k$  is a Hausdorff space,  the statement follows.
\end{proof}

%The virtual spaces of parameters $\tilde{F}_{\sigma}, \tilde{F}_{\bar{\sigma}}$ and the map $\eta _{\sigma, \bar{\sigma}}: %F_{\sigma}\to F_{\bar{\sigma}}$ given by Axiom~\ref{leaf}  are related as follows:

\begin{rem}\label{inclusion}
It follows from the condition c) of Axiom~\ref{universal} that the inclusion  $\tilde{\eta}_{\sigma, \bar{\sigma}}: \tilde{F}_{\sigma}\to \tilde{F}_{\bar{\sigma}}$  is a lifting of the map $\eta _{\sigma, \bar{\sigma}}: F_{\sigma}\to F_{\bar{\sigma}}$. We elaborate this more closely.
Let $\tilde{c}_{\sigma}\in \tilde{F}_{\sigma}$ and let us consider the leaf $W_{[\xi _{\sigma}, p_{\sigma}(\tilde{c}_{\sigma}})]$, where $p_{\sigma}$ is given by the condition $c)$ of Axiom~\ref{universal}.  Then, by Axiom~\ref{leaf}, there exists a unique leaf $W_{[\xi _{\bar{\sigma}}, c_{\bar{\sigma}}]}$ that  belongs to the $\bar{\partial}$- boundary of   $W_{[\xi _{\sigma}, p_{\sigma}(\tilde{c})]}$.  Let  $y\in  W_{[\xi _{\bar{\sigma}}, c_{\bar{\sigma}}]}/T^{\bar{\sigma}}$ and $(y_n)$ a sequence of points from $W_{[\xi _{\sigma}, p_{\sigma}(\tilde{c})]}/T^{\sigma}$ that converges to the point $y$. By Theorem~\ref{mainm}, we have that
 $y = (x, [\tilde{c}_{\bar{\sigma}}]_{p_{\bar{\sigma}}})\in P^{'}_{\bar{\sigma}}\times \tilde{F}_{\bar{\sigma}}/p_{\bar{\sigma}}$
and  $y_n= (x_n, [\tilde{c}_{\sigma}]_{p_{\sigma}})\in P_{\sigma}^{'}\times \tilde{F}_{\sigma}/p_{\sigma}$, and $y_n$ converges to $y$ in the topology of $\mathcal{E}/H$.  Since $\eta _{\sigma, \bar{\sigma}}(p_{\sigma}(\tilde{c}_{\sigma})) = c_{\bar{\sigma}}$, it follows that the condition c) of Axiom~\ref{universal} implies that $\tilde{\eta}_{\sigma, \bar{\sigma}}(\tilde{c}_{\sigma})  \in p^{-1}_{\bar{\sigma}}(c_{\bar{\sigma}}) $
 \end{rem}

It immediately also  follows:
\begin{cor}
If a polytope  $P_{\sigma}$ is a face of the polytope $P^{k}$ then  $I(F)\subset I_{\sigma}(\tilde{F}_{\sigma})$. Moreover, 
\[
\partial P^{k}\times I(F) \subset \cup _{\sigma} \stackrel{\circ}{P}_{\sigma}\times I_{\sigma}(\tilde{F}_{\sigma}),
\]
 where $\sigma$ runs through {\bf all} admissible sets  such that $P_{\sigma}$ is a face of the polytope $P^{k}$.
\end{cor}
From  previous constructions it follows that,  for  $(2n,k)$-manifolds that  satisfy Theorem~\ref{main-main} or Theorem~\ref{join}, it holds  $\mathcal{F} = \bar{F}$,  where $\bar{F}$ is a notation used in these theorems. Moreover,  in the case of  such  manifolds $M^{2n}$,  for any admissible polytope $\stackrel{\circ}{P}_{\sigma}\subset \stackrel{\circ}{P^{k}}$  the  virtual space of parameters $\tilde{F}_{\sigma}$ coincides with its   space of parameters $F_{\sigma}$. 

 The manifolds $G_{4,2}$, $\C P^{5}$ and $F_3$ are   examples of  manifolds that  satisfy Theorems~\ref{main-main},~\ref{join}.
It is proved in~\cite{BT-1} that the universal spaces of parameters for $G_{4,2}$ and $\C P^{5}$ are the manifolds $\C P^1$ and $\C P^2$,  respectively. Proposition~\ref{F3} of the  current  paper  proves that the universal space of parameters for $F_3$ is $\C P^1$.  {\it The first non-trivial} example in this direction is  the Grassmann manifold  $G_{5,2}$.   

In~\cite{BTN} (Corollary 21)  it is proved that  the universal  space of parameters for $G_{5,2}$ can be taken to be to the blow up of $\C P^2$ at four  points. Moreover, the  manifold   $G_{5,2}$ provides  an example such  that $\mathcal{F} \neq \overline{F}_{x}$ for all points $x\in \stackrel{\circ}{\Delta}_{5,2}$ and that   virtual spaces of parameters $\tilde{F}_{\sigma}$  of strata are, in  general, wider then  spaces of parameters $F_{\sigma}$, that is the projections $p_{\sigma}: \tilde{F}_{\sigma}\to F_{\sigma}$ are not identity maps.  

As it is shown in the paper~\cite{BTN}, the spaces of parameters of the strata in $G_{5,2}$ over the pyramids $K_{ij}(7)\subset \Delta _{5,2}$,  $1\leq i<j\leq 5$ consist of a point (Corollary 12), while their virtual spaces of parameters are homeomorphic to $\C P^1$ (Theorem 11, Lemma 27), Recall that the pyramid  $K_{ij}(7)$ is a convex hull of the points $\delta _{kl}$, $kl\neq ij$ and $1\leq k<l\leq 5$. 

We will discuss  the case of  Grassmann manifold  $G_{5,2}$ in more details in Subsection~\ref{proofA5}

%%%%%%%%%%%%%   12

\section{Quasitoric manifolds $M^{2n}$ as $(2n,n)$-manifolds}
As  it is  presented in Subsection~\ref{qt}, a  quasitoric manifold $M^{2n}$ is  equipped with a smooth action of the torus $T^n$
and a smooth $T^n$-invariant map $\mu : M^{2n}\to P^{n}$ which  is induced by the projection $\pi : M^{2n}\to P^n$.

\begin{thm}
A quasitoric manifold has a structure of $(2n,n)$-manifold.
\end{thm}
\begin{proof}
Let $P_{v}\subset P^n$ denote the complement   to  the union of  those   faces of $P^{n}$ which   {\it  do not contain} the vertex $v$. The set $P_{v}$  is an open subset in $P^n$ and $M_{v} = \mu ^{-1}(P_{v})$ is  an open subset in $M^{2n}$. The set $M_{v}$ is $T^n$-invariant,  it contains exactly one fixed point $x_v$ and  $\mu (x_v) = v$. Moreover, the set $M_{v}$ is a  dense set in the manifold  $M^{2n}$. It follows  from the description of a model for a quasitoric manifold, see~\eqref{recov}, that $M_v$ is homeomorphic to the quotient space $(T^n\times P_{v})/\approx$,  which is further homeomorphic to the space  $(T^n\times \R ^{n}_{+}) \cong \C ^n$.   We take the sets $M_{i}=M_{v_i}$ as  charts for a quasitoric manifold $M^{2n}$ , where $i$ runs through the list of all vertices of the polytope $P^{n}$. In this way   we obtain  that Axiom~\ref{atlas} and  Axiom~\ref{bij} are satisfied. 

 Admissible polytopes $P_{\sigma}$ are the faces of the polytope  $P^n$ and $P^{n}$ itself. The strata   $W_{\sigma}$ are indexed by the sets   $\sigma$ that  run  through the set of vertices of all faces for the polytope   $P^n$. It directly follows that $\widehat{\mu} : W_{\sigma}\to \stackrel{\circ}{P_{\sigma}}$ and $W_{\sigma}/T^n \cong \stackrel{\circ}{P_{\sigma}}$, so Axiom 3 and Axiom~\ref{fiber} are also satisfied. 

We see that any stratum  $W_{\sigma}$ consists of one leaf and its $\bar{\partial}$-boundary is the union of the strata  over the faces of the corresponding admissible polytope $P_{\sigma}$, so Axiom~\ref{leaf} is satisfied as well. Axiom~\ref{universal} is obviously satisfied, the universal space of parameters can be taken to be  a point,  since for all strata the spaces of parameters are points.
\end{proof}

%%%%%%%%%%%%%%%%%%  13

\section{$(2n,1)$-manifolds}

We first observe the following:
\begin{prop}
Any $(2n, 1)$-manifold is homeomorphic to the standard sphere $S^{2n}$
\end{prop}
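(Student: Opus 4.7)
The plan is to exploit that a one-dimensional polytope is just an interval with two vertices, so the whole structure collapses to a very rigid picture.

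First I would observe that by Axiom~\ref{bij} the fixed points of the $T^1$-action are in bijection with the vertices of the polytope $P^1$, and since any $1$-dimensional convex polytope is a closed interval with exactly two vertices $v_1,v_2$, there are exactly $m=2$ fixed points $x_1,x_2$. Accordingly, Axiom~\ref{atlas} gives exactly two $T^1$-invariant open dense charts $\varphi_i\colon M_i\to \R^{2n}$, $i=1,2$, each a homeomorphism and each containing exactly one fixed point.

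Next I would identify the admissible spaces $W_{\{i\}}$ with points. The admissible sets are only $\{1\},\{2\},\{1,2\}$, so $M^{2n}=\{x_1\}\sqcup\{x_2\}\sqcup W\cup\ldots$ once we know $W_{\{i\}}=\{x_i\}$. For this: the fixed point $x_i\in W_{\{i\}}$ has full stabilizer $T^1$; by Axiom~\ref{stab} the stabilizer is constant along $W_{\{i\}}$, so every point of $W_{\{i\}}$ is $T^1$-fixed. But $M_i$ contains only the single fixed point $x_i$ and $W_{\{i\}}\subset M_i$, so $W_{\{i\}}=\{x_i\}$. (This is also consistent with Axiom~\ref{fiber}: $T^{\{i\}}$ is the trivial torus since $\dim P_{\{i\}}=0$.)

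Then I would pin down $Y_i=M^{2n}\setminus M_i$. The admissible spaces partition $M^{2n}$, so
\[
M^{2n}=\{x_1\}\sqcup\{x_2\}\sqcup W,\qquad W=M_1\cap M_2.
\]
Since $W\subset M_i$ and $x_i\in M_i$, the only point of $M^{2n}$ missing from $M_i$ is $x_{3-i}$. Hence $Y_i=\{x_{3-i}\}$, and
\[
M_i \;=\; M^{2n}\setminus\{x_{3-i}\}\;\cong\;\R^{2n}.
\]

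Finally I would invoke the one-point compactification. The space $M^{2n}$ is compact Hausdorff, the subset $M_2\cong\R^{2n}$ is open with complement the single point $x_1$, and $\R^{2n}$ is locally compact Hausdorff. Therefore the natural map from the one-point compactification $(M_2)^{+}$ to $M^{2n}$ sending $\infty\mapsto x_1$ is a homeomorphism. Since $(\R^{2n})^{+}\cong S^{2n}$, this gives $M^{2n}\cong S^{2n}$.

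There is no real obstacle: the only care needed is the argument that $W_{\{i\}}$ consists of a single point, where one has to combine Axiom~\ref{stab} (constancy of the stabilizer along a stratum) with Axiom~\ref{atlas} (each chart holds exactly one fixed point). Once that is in hand, the rest is the standard fact that a compact Hausdorff space which is $\R^{2n}$ outside a single point is $S^{2n}$.
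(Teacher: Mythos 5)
Your proof is correct and follows essentially the same route as the paper: two fixed points and two charts, the singleton strata $W_{\{i\}}$ reduce to $\{x_i\}$, hence $M_i = M^{2n}\setminus\{x_{3-i}\}\cong\R^{2n}$, and $M^{2n}$ is the one-point compactification of $\R^{2n}$. You merely spell out more carefully (via Axiom~\ref{stab} plus the single-fixed-point property of each chart) the step that the paper compresses into a citation of Axioms~\ref{stab} and~\ref{bij}.
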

\begin{proof}
By  definition, for an $(2n, 1)$-manifold $M^{2n}$there exists an almost moment map $\mu : M^{2n} \to [-1,1]$ which  is $S^1$-invariant. Then  Axiom 2 implies that the action of  $S^1$  on $M^{2n}$ has exactly two fixed points $A_1$, $A_2$ and Axiom 1 implies that $M^{2n}$ has an atlas consisting of two charts $(M_1,u_1), (M_{2},u_2)$ each of them containing exactly one fixed point. By Axiom~\ref{stab} and Axiom~ \ref{bij}, we have that $A_1 = W_1=M_1\cap Y_2$ and $A_2= W_2= Y_1\cap M_2$, where $Y_i=M^{2n}\setminus M_i$, $i=1,2$. It implies that  $M_1 = M^{2n}\setminus A_2$ and $M_2=M^{2n}\setminus A_1$. Since $u_i : M_{i}\to \R ^{2n}$, $i=1,2$ are homeomorphisms, it follows that $M^{2n}$ is the one-point compactification of $\R ^{2n}$ and hence,  it is homeomorphic to the sphere $S^{2n}$.
\end{proof}

The vice verse is true as well:
\begin{thm}
The standard sphere $S^{2n}$ has a structures of an $(2n, 1)$-manifold for any $n$.
\end{thm}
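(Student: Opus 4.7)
The plan is to realize $S^{2n}$ as the unit sphere in $\C^n\times\R$, equip it with the standard rotational $S^1$-action, and take the real height function as the almost moment map; all the listed axioms can then be verified directly.

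\textbf{Setup.} Put $S^{2n}=\{(z,x)\in\C^n\times\R : |z|^2+x^2=1\}$, with action $t\cdot(z,x)=(tz,x)$ and $\mu(z,x)=x$, so $P^1=[-1,1]$. For $n\geq 1$ this is a smooth, closed, oriented, simply connected $2n$-manifold; the action is effective; all stabilizers are connected (they equal $S^1$ at the two poles $N=(0,1)$ and $S=(0,-1)$, and are trivial elsewhere); and $\mu$ is a smooth, $S^1$-invariant, open surjection onto $[-1,1]$.

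\textbf{Axioms~\ref{atlas}--\ref{fiber}.} Take the stereographic charts $M_1=S^{2n}\setminus\{S\}$ and $M_2=S^{2n}\setminus\{N\}$; stereographic projection from the opposite pole is an $S^1$-equivariant homeomorphism $\varphi_i\colon M_i\to\C^n$ sending the unique fixed point in $M_i$ to $0$, and each $M_i$ is dense in $S^{2n}$, yielding Axiom~\ref{atlas}. Axiom~\ref{bij} is immediate: $\mu$ bijects $\{N,S\}$ with the vertices $\{1,-1\}$ of $[-1,1]$. The three admissible spaces are the main stratum $W=S^{2n}\setminus\{N,S\}$ and the singletons $W_{\{1\}}=\{N\}$, $W_{\{2\}}=\{S\}$; the stabilizer function is constant on each, giving Axiom~\ref{stab}. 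For Axiom~\ref{fiber}, on $W$ the quotient torus $T^{W}=S^1$ matches $\dim P^1=1$, and the map $(z,x)\mapsto(z/\|z\|,x)$ realises $W\cong S^{2n-1}\times(-1,1)$, which descends modulo $S^1$ to the trivial bundle $\C P^{n-1}\times(-1,1)\to(-1,1)$; on the two singleton strata the statement is vacuous.

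\textbf{Axiom~\ref{leaf} and conclusion.} The space of parameters of the main stratum is $F=\C P^{n-1}$, and for $c\in F$ the leaf $W[c]$ is the intersection of $S^{2n}$ with the real $3$-plane $\ell_c\oplus\R$ minus the two poles, i.e.\ a cylinder $S^1\times(-1,1)$ whose closure in $S^{2n}$ is a round $2$-sphere and whose boundary is exactly $\{N\}\cup\{S\}$. This two-point boundary coincides with the union of the unique leaves of $W_{\{1\}}$ and $W_{\{2\}}$, one over each of the two proper faces (vertices) of $P^1=[-1,1]$, verifying Axiom~\ref{leaf}. The main step I expect to require care is this identification of the leaf structure in terms of complex lines of $\C^n$; everything else reduces to standard features of the diagonal $S^1$-action on an even-dimensional sphere.
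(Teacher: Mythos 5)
Your proof is correct and follows essentially the same route as the paper: both realize $S^{2n}$ as $\{(z,x)\in\C^n\times\R : |z|^2+x^2=1\}$, use height as the almost moment map, take the two stereographic-type charts $u_{\pm}(z,r)=\frac{1}{1\mp r}z$, identify the orbit space of the main stratum as $\C P^{n-1}\times(-1,1)$, and read off that the leaves are the $S^1$-invariant cylinders lying over complex lines, with boundary exactly the two poles. The only cosmetic difference is that the paper allows the slightly more general action $t\cdot(z_1,\ldots,z_n,r)=(t^{\epsilon_1}z_1,\ldots,t^{\epsilon_n}z_n,r)$ with $\epsilon_k=\pm 1$, whereas you use the diagonal case $\epsilon_k=1$ throughout; since the theorem only asks for existence of a $(2n,1)$-structure, this does not affect correctness.
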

\begin{proof}
Represent  the sphere $S^{2n}$ as the hypersurface
\[
 |z_1|^2 +\cdots + |z_n|^2 +r^2= 1\; \text{in}\; \R ^{2n+1}\cong \C ^{n}\times \R
\]
and let us consider  the action of the circle $S^1$ on $S^{2n}$  defined by
\[
t(z_1,\ldots ,z_n, r) = (t^{\epsilon _1}z_1,\ldots ,t^{\epsilon _{n}}z_n, r), \;\; \text{where}\;\; \epsilon _{k}=\pm 1.
\]
The fixed point for this action are $A_{1} = (0,\ldots ,0,1)$ and $A_{-1}=(0,\ldots ,0,-1)$. An almost moment map we define   by
\[ \mu : S^{2n} \to P= [-1, 1],\;\;  \mu (z_1,\ldots ,z_n,r)=r. \]
It is straightforward to see that  Axiom~\ref{bij} is satisfied.

Let us consider the  atlas consisting of two charts  $(M_1,u_1), (M_{-1},u_{-1})$, where
\[
M_{1}=\{ (z,r), r\neq 1\}\; \text{and} \;  u_{1}(z,r)= \frac{1}{1-r}z,
\]
\[
M_{-1}\{ (z,r), r\neq -1\}\to \C ^{n}\; \text{and}\;  u_{-1}(z,r)=\frac{1}{1+r}z.
\]
The charts $M_{1}$ and $M_{-1}$ are $S^1$-equivariant, each of them contains exactly one fixed point and $\overline{M_1}=\overline{M_{-1}}= S^{2n}$, so Axiom~\ref{atlas} is satisfied. The only non point stratum  is the main stratum $W_{\{-1,1\}}=M_{1}\cap M_{-1} = \{(z,r)\in S^{2n} | r\neq -1,1\}$. The induced map $\widehat{\mu} : W_{\{-1,1\}}/S^1\to (-1,1)$ 
 is a fiber bundle with the   fiber $\C P^{n-1}$, so Axiom~\ref{fiber} is satisfied and the orbit space $W_{\{-1,1\}}/S^1$ is homeomorphic to the trivial bundle  $\C P^{n-1}\times (-1, 1)$.  The circle $S^1$ acts freely on $W_{\{-1,1\}}$,  so the projection $\pi : W_{\{-1,1\}}\to W_{\{-1,1\}}/S^1$ is a fiber bundle. Therefore, the leaf $W_{\{-1,1\}}[\xi, c]$    defined by~\eqref{leaf} is given as  $S^1\cdot c \times (-1,1)$, where $\xi : W_{\{-1,1\}}/S^1 \to \C P^{n-1}$ is a fixed projection  and $c\in \C P^{n-1}$. It  implies that  $\bar{\partial}$-boundary  of $W_{\{-1,1\}}[\xi, c]$ consists of the  two fixed points $A_{1}$ and $A_{-1}$, which  are  leafs over the vertices of  the interval $P$. In this way we see that Axiom~\ref{leaf} is satisfied as well.  Axiom~\ref{universal} is obviously satisfied. The universal space of parameters can be taken to be  $\C P^{n-1}$. It coincides with  the virtual spaces of parameters for all strata.
 \end{proof}
Since the only admissible polytopes for $P^{1}=[-1,1]$ are $P^{1}$ and its  faces $-1$, $1$ , it follows that Theorem~\ref{join} can be directly applied:
\begin{thm} 
It holds
\[ S^{2n}/T^1 = \partial P \ast \C P^{n-1},\; \text{where}\; P=[-1, 1].
\]
\end{thm}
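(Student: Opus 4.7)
The plan is to verify that the hypotheses of Theorem~\ref{join} hold in this situation and then apply it directly, identifying the resulting terms with the claimed description.

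First I would enumerate the admissible polytopes of $P=[-1,1]$. From the construction in the preceding theorem, the set of strata consists of the two fixed points $A_{1}, A_{-1}$ (giving the zero-dimensional admissible polytopes $\{1\}$ and $\{-1\}$) and the main stratum $W_{\{-1,1\}}$ (whose image under $\mu$ is $(-1,1)$, so the corresponding admissible polytope is $P$ itself). Thus $P_{\mathfrak{S}} = \{\{-1\}, \{1\}, P\}$, which is exactly the set of $P$ together with its faces. In particular, the only admissible polytopes intersecting $\partial P$ are the two vertices, each a single point, so every point of $\partial P$ lies in the interior of exactly one admissible polytope. Hence every point of $\partial P$ is simple in the sense of the definition.

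Next, for each $\sigma_{0} \in \mathfrak{S}$ with $P_{\sigma_{0}}$ a proper face of $P$, the stratum $W_{\sigma_{0}}$ is a single fixed point, so $W_{\sigma_{0}}/T^{\sigma_{0}}$ is a point and therefore $F_{\sigma_{0}}$ is a point. For the main stratum, the previous theorem established that $\widehat{\mu} : W_{\{-1,1\}}/S^{1} \to (-1,1)$ is a locally trivial fibre bundle with fibre $\C P^{n-1}$, so by Corollary~\ref{homeom} we obtain $W_{\{-1,1\}}/S^{1} \cong (-1,1)\times \C P^{n-1}$. Consequently $\widehat{\mu}^{-1}(x) \cong \C P^{n-1}$ for every $x\in \stackrel{\circ}{P}$, and since all interior fibres are simple, $\overline{F_{x}} = F_{x} \cong \C P^{n-1}$ for each such $x$. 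Thus the homeomorphic type $\overline{F}$ appearing in Theorem~\ref{join} is exactly $\C P^{n-1}$.

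With all hypotheses of Theorem~\ref{join} in place for $k=1$, the conclusion gives
\[
S^{2n}/T^{1} \;\cong\; S^{k-1} \ast \overline{F} \;=\; S^{0} \ast \C P^{n-1}.
\]
Finally, identify $\partial P = \{-1, 1\} = S^{0}$, yielding $S^{2n}/T^{1} \cong \partial P \ast \C P^{n-1}$, as claimed. There is no substantive obstacle here; the only point requiring care is the verification that the proper faces of $P$ carry point-fibres, which is immediate from the fact that the only non-trivial stratum is the main one.
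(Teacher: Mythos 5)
Your proof is correct and follows essentially the same route as the paper: the paper simply notes that the only admissible polytopes are $P$, $\{-1\}$, $\{1\}$ and invokes Theorem~\ref{join}, while you spell out the verification that all boundary points are simple, the face parameter spaces are points, and $\overline{F}\cong\C P^{n-1}$. The extra care is welcome but the argument is identical in substance.
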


%%%%%%%%%%%%%%%%%%%   14

 \section{Complex Grassmann manifolds $G_{k+1,q}$ as  $(2n,k)$-manifolds\\  with $n=q(k+1-q)$}

According to  Subsection~\ref{gr}, the complex Grassmann manifold $G_{k+1,q}$ is  canonically endowed  with an effective action of the torus $T^k$ and the smooth $T^k$-invariant moment map $\mu : G_{k+1, q}\to \R ^{k+1}$,  whose image is the hypersimplex $\Delta _{n, k}$.

Using the Pl\"ucker coordinates  and the corresponding atlas, as defined in Subsection~\ref{gr},   it is not difficult  to  prove:

\begin{prop}\label{gr-4}
The manifold $G_{k+1,q}$ has a structure that  satisfies the first five axioms of $(2n, k)$-manifolds, where  
$n=q(k+1-q)$.
\end{prop}

The detailed proof that  Axioms 1 - 4  are satisfied can be found in~\cite{BTN}, Section 2. We provide here the verification of Axiom~\ref{leaf}.

%%%%%%%%%%%%%%%%%%   14.1

\subsection {Proof of Axiom~\ref{leaf}}\label{proofA5} (In order to avoid the confusion with indices ,  we use in this subsection the notation $z_{i,j}$  along with the common notation $z_{ij}$.)   It follows from the definition of a leaf and the description of  strata for $G_{k+1,q}$ (see~\cite{BTN}, Subsection 3.1), that any leaf $W_{\sigma}[\xi _{\sigma},c_{\sigma}]$ is a    $(\C ^{*})^{\sigma}$-orbit of a point from $W_{\sigma}$.
It immediately implies  that any  leaf  $W_{\sigma}[\xi _{\sigma},c_{\sigma}]$ is a smooth submanifold in $G_{k+1,q}$. On the other hand for any stratum $W_{\sigma}$  the closure of the $(\C ^{*})^{\sigma}$ -orbit of a point from $W_{\sigma}$   is a toric manifold.  By the result of~\cite{AT} the complement to this  $(\C ^{*})^{\sigma}$ -orbit in  this toric manifold  consists of  $(\C ^{*})^{\sigma}$-orbits of  smaller dimensions and the moment map gives a bijection between these orbits and the faces of  the polytope $P_{\sigma}$.  Moreover, the induced  moment map gives a diffeomorphism between $\overline{W_{\sigma}[\xi _{\sigma},c_{\sigma}]/T^{\sigma}}$ and $P_{\sigma}$ as  manifolds with corners.   In this  way we verify that the first  and the second conditions of Axiom~\ref{leaf} are satisfied for the Grassmann manifolds $G_{k+1,q}$. 

More precisely, we proved:
\begin{lem}
Let  $P_{\bar{\sigma}}$ be  a face of the polytope $P_{\sigma}$.  Then the map $\eta _{\sigma, \bar{\sigma}} : F_{\sigma}\to F_{\bar{\sigma}}$ is defined by  $\eta _{\sigma, \bar{\sigma}} (c_{\sigma}) = c_{\bar{\sigma}}$ such that
the leaf $W_{\bar{\sigma}}[\xi _{\bar{\sigma}}, c_{\bar{\sigma}}]$ is a unique
 $(\C ^{*})^{\bar{\sigma}}$-orbit over $P_{\bar{\sigma}}$ that  belongs to the $\bar{\partial}$-boundary of the  $(\C ^{*})^{\sigma}$-orbit $W_{\sigma}[\xi _{\sigma}, c_{\sigma}]$ over $P_{\sigma}$.
\end{lem}
We verify that the third condition of Axiom~\ref{leaf} is satisfied.  In order to do that, we prove the following result:
\begin{prop}\label{continuous}
 The map $\eta _{\sigma, \bar{\sigma}} : F_{\sigma}\to F_{\bar{\sigma}}$ is a continuous  map for any admissible sets $(\sigma, \bar{\sigma})$ such that $P_{\bar{\sigma}}$ is a face of the polytope $P_{\sigma}$.
\end{prop}
\begin{proof}

 Since the polytope $P_{\bar{\sigma}}$ is a face of the polytope $P_{\sigma}$, it follows that  they have a common vertex, so there always exists a chart $M_{I}$ such that both $W_{\sigma}$ and $W_{\bar{\sigma}}$ belong to this chart. Therefore, the proof that the map $\eta _{\sigma, \bar{\sigma}} : F_{\sigma}\to F_{\bar{\sigma}}$ is continuous can be realized  using the local coordinates of such a fixed chart.

We proceed with the proof  through the several   steps. We first show (according to the paper~\cite{BTN}),  that the space of parameters of a stratum $W_{\sigma}$ in $G_{k+1,q}$ can be embedded into  $(\C P_{B}^{1})^{N-l}$, where $\C P^{1}_{B} = \C P^{1}\setminus B$,  $B=\{(0:1), (1:0)\}$ and $N=(q-1)(k-q)$, $0 \leq l\leq N$.

Recall that we proved in~\cite{BTN} (Proposition 1)   that the space of parameters of the main stratum for the Grassmann manifolds $G_{k+1,2}$ can be embedded into  $(\C P^{1}_{A})^{k-2}$, where $\C P^{1}_{A} = \C P^{1}\setminus A$,  $A=\{(0:1), (1:0), (1:1)\}$. This embedding decomposes by  the  embedding $F\to (\C P^{1})^{n}/PGL(2, \C)$ (see also~\cite{Kap}) and the proof does not use the charts of the manifold $G_{k+1,2}$.   

We  construct now  an analogue embedding   for the space of parameters of  the main stratum $W$ of an arbitrary Grassmann manifold $G_{k+1,q}$, $q\geq 1$, using the coordinates  in a   chart.  The main stratum belongs to the intersection of all  charts $(M_I, u_I)$.  Fix the first chart $(M_{I}, u_{I})$, where $I =\{1,\ldots ,q\}$ with the local coordinates $z_{ij}$, $1\leq i \leq q$, $1\leq j\leq k+1-q$. The action of the algebraic torus $(\C ^{*})^{k+1}$ on $G_{k+1,q}$  induces  $(\C ^{*})^{k+1}$-action on   $\C ^{q(k+1-q)}$, which is,  according to~\cite{BTN} (Section 3), given by the representation
\[
(t_1,\ldots , t_{k+1}) \to (\frac{t_{q+1}}{t_1}, \ldots, \frac{t_{k+1}}{t_1}, \ldots , \frac{t_{q+1}}{t_q}, \ldots ,\frac{t_{k+1}}{t_{q}}).
\]
Let  $\tau _{j}= \frac{t_{q+j}}{t_1}$, $1\leq j\leq k+1-q$ and $\tau _{k+i-q} = \frac{t_{q+1}}{t_i}$, $2\leq i\leq q$.   We obtain  an
	effective action of the torus  $(\C ^{*})^{k}$ on $\C ^{q(k+1-q)}$, which is   given as the composition of the representation
\begin{equation}\label{rep}
(\tau _{1},\ldots ,\tau _{k}) \to (\tau _{1,1}, \tau_{1,2}, \ldots , \tau_{q,k+1-q}),\;\; \text{where}\;\;
%(\tau _{1},\ldots ,\tau _{q}, \tau _{q+1},\frac{\tau _{q+1}\tau _{2}}{\tau _{1}},\ldots , \frac{\tau _{q+1} \tau _{q}}{\tau %_{1}},\tau _{q+2},\frac{\tau_{q+2}\tau_{2}}{\tau_{1}}
%, \ldots \frac{ \tau_{q+2}\tau _{q}}{\tau _{1}}, \ldots, \tau _{k}, \frac{\tau _{2}\tau _{k}
%}{\tau _1}, \ldots \frac{\tau _{k}\tau _{q}}{\tau _{1}})
\tau _{i,j} = \frac{\tau _{i}\tau _{ j}}{\tau _{1}},
\end{equation}
and the standard  action of the torus $(\C ^{*})^{q(k+1-q)}$ on
$\C ^{q(k+1-q)}$.

\begin{lem}\label{embmain}
The points of the $(\C ^{*})^{k}$-orbit  of a point ${\bf a} = (a_{1,1}, \ldots ,a_{q,k+1-q})$  of the main stratum
%of the  stratum $W$ consist of the points $(z_{11}, \ldots , z_{k+1-qq})$ which by~\eqref{rep} satisfy the equations
satisfy the equations
\begin{equation}\label{mainstr}
 c_{i,j}^{'}z_{1,1}z_{i,j}=c_{i,j}z_{i,1}z_{1,j}, \;\; 2\leq i\leq q, \; 2\leq j\leq k+1-q,
\end{equation}
where $c_{i,j}^{'} = a_{i,1}a_{1,j}$ and $c_{i,j} =a_{1,1}a_{i,j}$.
\end{lem}
\begin{proof}
If a point $(z_{1,1}, \ldots , z_{q,k+1-q})$  belongs to the $(\C ^{*})^{k}$-orbit  of a point ${\bf a} = (a_{1,1}, \ldots ,a_{q,k+1-q})$ then $z_{i,j} = \tau _{i,j}a_{i,j}$. It follows that $c_{i,j}^{'}z_{1,1}z_{i,j} = \tau _{1,1}\tau _{i,j}a_{i,1}a_{1,j}a_{1,1}a_{i,j}$, while $c_{i,j}z_{i,1}z_{1,j} = \tau_{i,1}\tau _{1,j}a_{1,1}a_{i,j}a_{i,1}a_{1,j}$. Since $\tau _{i,j} = \frac{\tau _{i}\tau _{ j}}{\tau _{1}}$, the statement follows from~\eqref{rep}.
\end{proof}

We find useful to note the following:
\begin{lem}\label{locpl}
Let $(a_{1,1}, \ldots , a_{q,k+1-q})$ be  the local coordinates of a point $L\in G_{k+1q}$  in a chart $M_{I}$. Then
\begin{equation}
a_{p,s}= P^{\hat{I}}(L), \;\; \hat{I} = (I\setminus \{s\})\cup \{p\}.
\end{equation}
 \end{lem}
\begin{proof}
Let  the matrix $A(L)$  represents an element $L$ in the chart $M_{I}$. Let us consider the submatrix $A_{\hat{I}}(L)$ of this matrix.   Since the submatrix $A_{I}(L)$  is an identity matrix and $\hat{I} = (I\setminus \{s\})\cup \{p\}$,  it follows that $a_{p,s}= P^{\hat{I}}(L)$.
\end{proof}

The Pl\"ucker coordinates of a point $L\in G_{k+1,q}$, up to common multiply, do not depend on the choice of a chart. All   Pl\"ucker coordinates of all points from the main stratum $W$ are non-zero, so  it  follows from  Lemma~\ref{locpl}  that all coordinates  $a_{i,j}$ are non-zero. Moreover, all    $(2\times 2)$-minors of the matrix $A(L) = (a_{i,j})_{1\leq i\leq q, 1\leq j\leq k+1-q}$ are non-zero.  It implies that $(c_{i,j}:c_{i,j}^{'})\notin A=\{ (1:0), (0:1), (1:1)\}$.
 Therefore, this  shows that  the  main stratum $W$,  written in the local coordinates of $M_{I}$, belongs to the  family of   algebraic manifolds  which are  given by  the system~\eqref{mainstr},  where $(c_{i,j}: c_{i,j}^{'})\in \C P^{1}_{A}$.

\begin{lem}
Let the coordinates of  a point ${\bf b} = (b_{1,1}, \ldots ,b_{q,k+1-q})$  of the main stratum  satisfy the  equations~\eqref{mainstr} of a point ${\bf a}$.  Then the point  ${\bf b}$ belongs to the $(\C ^{*})^{k}$-orbit of the point ${\bf a}$.
\end{lem}

\begin{proof}
Let $b_{i,1} = \tau _{i,1}a_{i,1}$, $1\leq i\leq q$  and $b_{1,j} = \tau _{1,j}a_{1,j}$, $2\leq j\leq k+1-q$, for some point $(\tau _{1,1}, \ldots \tau _{q,1}, \tau _{1,2}, \ldots \tau _{1,k+1-q})\in (\C ^{*})^{k}$. From~\eqref{mainstr} it follows that
\[
b_{i,2} = \frac{c_{i,2}b_{i,1}b_{1,2}}{c_{i,2}^{'}b_{1,1}}  = \frac{a_{1,1}a_{i,2}\tau _{i,1}a_{i,1}\tau_{1,2}a_{1.2}}{a_{i,1}a_{1,2}\tau _{1,1}a_{1,1}}= = \frac{\tau _{1,2}\tau_{i,1}}{\tau _{1,1}}a_{i,2} = \tau _{i,2}a_{,i2},
\]
where $\tau _{i.2}$ is as in~\eqref{rep}.

In the same way, we obtain  $b_{i,j}=\tau _{i,j}a_{i,j}$, which proves the statement.
\end{proof}

Altogether,  we obtain

\begin{prop}\label{mainembed}
The map $f : W \to (\C P^{1}_{A})^{N}$, where $N=(q-1)(k-q)$ given in the local coordinates of a chart $M_{I}$ by
\[
f(a_{1,1}, \ldots ,a_{q,k+1-q}) =
\]
\[  ((c_{2,2}:c_{2,2}^{'}), \ldots ,(c_{2,k+1-q}:c_{2,k+1-q}^{'}), \ldots, (c_{q,2}:c_{q,2}^{'}), \ldots, (c_{q,k+1-q}:c_{q,k+1-q}^{'})),
\]
\[
c_{i,j} =  a_{1,1}a_{i,j}, \;\; c_{i,j}^{'} = a_{i,1}a_{1,j},
\]
is $(\C ^{*})^{k}$-invariant, where $(\C P^{1}_{A})^{N}$  is considered with the trivial $(\C ^{*})^{k}$-action. Moreover,  it induces an embedding of the  space  of parameters $F = W/(\C ^{*})^{k}$ of the main stratum  into   $(\C P^{1}_{A})^{N}$.
\end{prop}

Applying  the same argument as for the main stratum we   obtain as well analogue result in the following case:
\begin{cor}
Let $W_{\sigma}$ be a stratum such that $W_{\sigma}\subset M_{I}$ and $u_{I}(W_{\sigma})\subset \{ {\bf z} = (z_{i,j})\in \
C ^{q(k+1-q)} \; |\; z_{i,j}\neq 0 \; \text{for all} \; (i,j)\}$. Then the space of parameters $F_{\sigma}$ of the stratum $W_{\sigma}$ can be embedded into $(\C P^{1}_{B})^{N}$, $N=(q-1)(k-q)$ as above.
\end{cor}

\begin{rem}
The condition that all  local coordinates  in a fixed chart $M_I$  of all points of a stratum $W_{\sigma}$ are  non-zero is the property  of  a fixed chart. Precisely, if we consider the Grassmann manifold $G_{4,2}$ then the stratum  $W_{34}$ defined by the condition that "$P^{ij}(W_{34})=0$ if and only if $(i,j)=34$, belongs to the intersections of the  charts $M_{12}$ and $M_{13}$". It is easy to check~\cite{BT-1} that all local coordinates for all points of the stratum $W_{34}$ in the chart $M_{12}$ are non-zero, while in the chart $M_{13}$ all points of this stratum  have  one  zero coordinate.
\end{rem}

The result similar to that in Proposition~\ref{mainembed} holds for any stratum whose space of parameters is not a point. We first recall that the definition of the strata~  as well as  results of~\cite{BTN} (Subsection 3.1)  imply:

\begin{lem} Let $W_{\sigma}$ be a stratum and let $M_{I}$ be a chart on the Grassmann manifold $G_{k+1,q}$. Then:
\begin{itemize}
\item The stratum $W_{\sigma}$ belongs to the chart $M_{I}$ if and only if  $I\in \sigma$, that is if and only if  $P^{I}(W_{\sigma})\neq 0$,
\item  Let  $W_{\sigma}\subset M_{I}$, $L_{0}\in W_{\sigma}$,  $u_{I}(L_0) = (z_{1,1}(L_0), \ldots ,z_{q,k+1-q}(L_0))$ and $z_{i,j}(L_0)=0$ for some $(i,j)$.  Then $z_{i,j}(L)=0$ for any   point  $L\in W_{\sigma}$,
\item Let  $J\subset \{(1,1), \ldots, (q,k+1-q)\}$ be such a subset of the set of indices for  the coordinates in a chart $M_{I}$, that $z_{i,j}(L_0)\neq 0$ if and only if  $(i,j)\in J$ for some point $L_{0}\in W_{\sigma}\subset M_{I}$. Then
 \[
u_{I}(W_{\sigma})\subset \C ^{J}=\{(z_{1,1}, \ldots , z_{q,k+1-q})\in \C ^{q(k+1-q)} \; | \: z_{i,j}\neq 0 \Longleftrightarrow  (i,j) \in J\}.
\]
\end{itemize}
\end{lem}
Here and further by the symbol $\C ^{J}$ we denote the  linear space of  maps $\{ J\to \C\}$.

In an analogous way as for the main stratum we prove:
\begin{lem}\label{l-28}
The space of parameters $F_{\sigma}\neq {\text pt}$ of a stratum $W_{\sigma}$ can be embedded into $(\C P ^{1}_{B})^{g}$ for some $1\leq g\leq N$.
\end{lem}
\begin{proof}
Let us consider a stratum  $W_{\sigma}\subset M_{I}$ and   assume that  
\[
u_{I}(W_{\sigma}) \subset \C ^{J} = \{ (z_{1,1},\ldots ,z_{q,k+1-q}) | z_{i,j}\neq 0,\; (i,j)\in J,\;\; z_{i,j} =0,\;(i,j)\notin J \},
\]
 where $J =\{(i_1,j_1),\ldots ,(i_l,j_l)\}\subseteq \{(1,1),\ldots ,(q,k+1-q)\}$. 

The algebraic torus $(\C ^{*})^{\sigma} = (\C ^{*})^{s}\subset (\C ^{*})^{l}$, $s\leq l$ is defined to be   a torus of the  maximal dimension  that  acts freely on the stratum $W_{\sigma}$.  There is a representation  $(\C ^{*})^{s}\to  (\C ^{*})^{l}$ obtained in an analogous way  as the  representation~\eqref{rep}. Namely, we take   the representation~\eqref{rep} composed with the projection $(\C^{*})^{q(k+1-q)} \to (\C ^{*})^{l}$ on the coordinate subspace $\C ^{J}$ and the canonical action of $(\C ^{*})^{l}$ on $\C ^{l}$. 

   Let  $\tau _{i_1,j_1}, \ldots ,\tau_{i_l,j_l}$  be  coordinates on the torus $(\C ^{*})^{s}$. Without loss of generality,   the coordinates  $\tau _{i_1,j_1}, \ldots ,\tau_{i_s,j_s}$ can be taken as coordinates for the torus $(\C ^{*})^{s}$. Then the   representation of $(\C ^{*})^{s}\to (\C ^{*})^{l}$ writes in these coordinates as an identity on the coordinates  $\tau _{i_1,j_1}, \ldots ,\tau_{i_s,j_s}$ and it is given by  $\tau _{ip,j_p} = \tau _{i_1,j_1}^{\varepsilon _{1p}}\cdots \tau _{i_s,j_s}^{\varepsilon _{sp}}$ for $s+1\leq p\leq l$,  where $\varepsilon _{rp}= 0,1,-1$  for any $1\leq r\leq s$. Therefore, as in the proof of Lemma~\ref{embmain}, we conclude that  the  points from the stratum $W_{\sigma}$,  written   in the local coordinates of the chart $M_{I}$, satisfy the following system of equations:
\begin{equation}\label{stratum-coord}
\left \{
\begin{array}{c}
c_{i_p,j_p}^{'}z_{i_p,j_p}\prod\limits_{\varepsilon _{rp} = -1} z_{i_r,j_r} =  c_{i_p,j_p}\prod\limits_{\varepsilon_{rp}=1}z_{i_r,j_r}, \;s+1\leq p\leq l,\; 1\leq r\leq s,\\
z_{i,j}=0,\; (i,j)\notin \{(i_1,j_1),\ldots ,(i_l,j_l)\},
\end{array}\right .
\end{equation}
where   $c_{i,j}, c_{i,j}^{'} \neq 0$.  In this way, an embedding  of the  space of parameters $F_{\sigma}$  into $(\C P^{1}_{B})^{l-s}$ is defined .
\end{proof}

\begin{rem}
The embedding described in Lemma~\ref{l-28} can be presented more explicitly in an  analogous way  as it is done in  Proposition~\ref{mainembed}. Namely according to the system of equations~\eqref{stratum-coord}  there is  the map $f_{\sigma} : W_{\sigma}\to (\C P^{1}_{B})^{l-s}$, which is in the local coordinates of the chart
$M_{I}$ given by
\[
f_{\sigma}(b_{i_1,j_1}, \ldots ,b_{i_l,j_l}) = ((c_{i_p,j_p}:c_{i_p,j_p}^{'}))_{s+1\leq p\leq l},
\]
\[
c_{i_p,j_p} = b_{i_p,j_p}\prod\limits_{\varepsilon _{rp}=-1}b_{i_r,j_r}, \;\; c_{i_p,j_p}^{'} = \prod\limits_{\varepsilon _{rp}=1}b_{i_r,j_r}, \; 1\leq r\leq s.
\]
This map is invariant for the considered  $(\C ^{*})^{s}$-action, so it induces  an embedding of the space of parameters $F_{\sigma} = W_{\sigma}/(\C ^{*})^{\sigma}$ into $(\C P^{1}_{B})^{l-s}$.
\end{rem}
We finally describe the map $\eta _{\sigma, {\bar \sigma}} : F_{\sigma} \to F_{\bar{\sigma}}$.
\begin{lem}
Let a polytope $P_{\bar{\sigma}}$ is a face of the polytope $P_{\sigma}$. Let us consider the above constructed embeddings of the spaces of parameters $F_{\bar{\sigma}}\subset (\C P^{1}_{B})^{r}$ and  $F_{\sigma}\subset (\C P^{1}_{B})^{b}$. Then $r<b$ and the space  $(\C P^{1}_{B})^{r}$ is a factor in the product $(\C P^{1}_{B})^{b}$. The  map $\eta _{\sigma, {\bar \sigma}} : F_{\sigma} \to F_{\bar{\sigma}}$  is given  by the projection $(\C P^{1}_{B})^{b}\to (\C P^{1}_{B})^{r}$.
\end{lem}
\begin{proof}
%The leafs of the main  stratum are parametrized by the points from $(\C P^{1}_{A})^{N-l}$.
 Let us consider the leaf $W_{\sigma}[\xi_{\sigma}, c_{\sigma}]$ of the stratum $W_{\sigma}$. Let $W_{\bar{\sigma}}[\xi _{\bar{\sigma}}, c_{\bar{\sigma}}]$ be a unique leaf of  $W_{\bar{\sigma}}$ which is contained in the $\bar{\partial}$-boundary of $W[\xi_{\sigma}, c_{\sigma}]$. The coordinates of the points from the leaf  $W_{\sigma}[\xi _{\sigma}, c_{\sigma}]$ satisfy equations~\eqref{stratum-coord} for  a fixed point of the space  $(\C P^{1}_{A})^{l-s}$. It follows that the coordinates of the points from  the leaf   $W_{\bar{\sigma}}[\xi _{\bar{\sigma}}, c_{\bar{\sigma}}]$  in a fixed chart satisfy the following system: 
\begin{equation}
\left \{
\begin{array}{c}
c_{i_p,j_p}^{'}z_{i_p,j_p}\prod\limits_{\varepsilon _{rp} = -1} z_{i_r,j_r} =  c_{i_p,j_p}\prod\limits_{\varepsilon_{rp}=1}z_{i_r,j_r}, \; p\in V, \; V\subset \{s+1,\ldots,  l\},\\
z_{i_{p},j_{p}}=0 \; \text{for}\; p\in \{s+1, \ldots l\}\setminus V,\\
z_{i,j}=0,\; (i,j)\notin\{(i_1,j_1),\ldots ,(i_l,j_l)\}.
\end{array}\right .
\end{equation}
In this way, we see that  the space of parameters $F_{\bar{\sigma}}$ of the stratum $W_{\bar{\sigma}}$  can be embedded into $(\C P^{1}_{B})^{r}$, where $r={|V|}<b$, while   the space of parameters $F_{\sigma}$ of the stratum $W_{\sigma}$ is embedded into $(\C P^{1}_{B})^{b}$, $b=l-s$.  Moreover, the space $(\C P^{1}_{B})^{r}$ contains the space  $(\C P^{1}_{B})^{b}$ as its coordinate subspace,  with   the coordinates  indexed by $ i_{p}j_{p}$, where $p\in V$.  Therefore, the coordinate projection $(\C P^{1}_{B})^{b}\to (\C P^{1}_{B})^{r}$ gives the map  $\eta _{\sigma, \bar{\sigma}} : F_{\sigma} \to F _{\bar{\sigma}}$ 
% is given by the projection to the product of the factors which are  indexed  by $i_{p}j_{p}$, $p\in   V$.
\end{proof}

\begin{cor}
The map $\eta _{\sigma, \bar{\sigma}} : F_{\sigma} \to F _{\bar{\sigma}}$  is a continuous map.
\end{cor}
In this way we completed the proof of Proposition~\ref{continuous}.
\end{proof}

\begin{rem} The strata $W_{\sigma}$ coincide with the matroid strata defined in~\cite{GGMS}.  Moreover,  in the papers~\cite{GS} and~\cite{GGMS}, see also~\cite{BT-1} for the summary, the three more  stratifications of the Grassmann manifolds $G_{k+1,q}$ are defined: by "the soft Schubert cells", by the moment map and  by the arrangements
of planes, and it is showed that all these four stratifications are  equivalent.
\end{rem}
\begin{rem}
The closure of the  strata  $W_{\sigma}$ on the Grassmann manifolds are known in the literature as matroid varieties. These varieties do not behave nicely regarded  to the matroids assigned to them. In that context these varieties were recently studied in~\cite{FNR},~\cite{F}.
\end{rem}

%%%%%%%%%%%%%%%%%%%%%%%   14.2

\subsection{Axiom~\ref{universal} for the manifolds $G_{k+1, q}$} In order to verify Axiom~\ref{universal} we need to introduce  an universal space of parameters $\mathcal{F}$,  virtual spaces of parameters $\tilde{F}_{\sigma}$, continuous embeddings
 $I_{\sigma} : \tilde{F}_{\sigma}\to \mathcal{F}$ and projections $\tilde{F}_{\sigma}\to F_{\sigma}$, where $\sigma$ runs through the admissible sets.   We also need to prove that:
\begin{itemize}
\item[1)]  $\mathcal{F}$ is a compactification of the space of parameters $F=\tilde{F}$ of the main stratum and  $\mathcal{F} = \cup _{\sigma}I_{\sigma}(\tilde{F}_{\sigma})$,
\item[2)] The map $H : \mathcal{E} = \cup _{\sigma}P_{\sigma}^{'}\times \tilde{F}_{\sigma} \to \mathfrak{E}(G_{k+1, q}, \Delta_{k+1,q})/T^k$ is a continuous map. 
\end{itemize}
At this moment we have proved    Axiom~\ref{universal}   for    the Grassmannians $G_{4,2}$ and $G_{5,2}$. These results are described in detail   in~\cite{BT-1} and~\cite{BTN} according to the theory presented above. They turned out to be non trivial and already have been further developed in the papers of several authors. We formulate these results precisely.

\underline{ In the case of  Grassmann manifold $G_{4,2}$:}

\begin{itemize}
\item [(1)] $F= \tilde{F}=\C \setminus\{0,1\}$;
\item[(2)] $\mathcal{F}= \C P^1$;
\item [(3)] $F_{\sigma} = \tilde{F}_{\sigma}=pt$  for  the  strata $W_{\sigma}$ such that $\stackrel{\circ}{P}_{\sigma}\subset \stackrel{\circ}{\Delta}_{4,2}$;\\
 $F_{\sigma}=pt$, $\tilde{F}_{\sigma}=\C P^1$,   for the strata $W_{\sigma}$ such that   $P_{\sigma}\subset \partial \Delta_{4,2}$.
\end{itemize}
 Following the approach described above (see the details in~\cite{BT-1}), the embeddings $I_{\sigma} : \tilde{F}_{\sigma} \to \mathcal{F} $ can be described using the notion of  coordinates in a chart. We do  it in the chart  $M_{12}$.  The strata $W_{\sigma}$ such that $W_{\sigma}\subset M_{12}$ and $\stackrel{\circ}{P}_{\sigma}\subset \stackrel{\circ}{\Delta}_{4,2}$ are indexed by the admissible sets $\sigma_{ij}  =\{12, 13, 14, 23, 24, 34\}\setminus \{ij\}$, $ij\neq 12$,  and $\sigma_{13, 24}= \{12, 23, 14, 34\}$, $\sigma _{14, 23} = \{12, 23, 24, 34\}$.  For them it holds $I_{\sigma _{34}}(\tilde{F}_{\sigma_{34}})=(1:1)$,  $I_{\sigma _{14}}(\tilde{F}_{\sigma_{14}})= I_{\sigma _{23}}(\tilde{F}_{\sigma_{23}})=(0:1)$,  $I_{\sigma _{13}}(\tilde{F}_{\sigma_{13}})= I_{\sigma _{24}}(\tilde{F}_{\sigma_{24})}=(1:0)$ and  $I_{\sigma _{14, 24}}(\tilde{F}_{\sigma_{14,23}})= (0:1)$,  $I_{\sigma _{13, 24}}(\tilde{F}_{\sigma_{13, 24}})=(1:0)$. There are also  the strata $W_{\sigma}$ that do not belong to the chart $M_{12}$ but for which $\stackrel{\circ}{P}_{\sigma}\subset \stackrel{\circ}{\Delta}_{4,2}$. These are   the strata  $W_{\sigma_{1}}$, $\sigma _{1} = \{ 13, 14, 23, 24, 34\}$  and $W_{\sigma_2}$, $\sigma_{2} = \{13, 14, 23, 24\}$.
%maps by the moment map to the pyramid $P_{\sigma _{1}}\subset\stackrel{\circ}{\Delta}_{4,2}$ and the square $P_{\sigma _{2}}%\subset\stackrel{\circ}{\Delta}_{4,2}$ respectively.
The strata $W_{\sigma _1}$ and $W_{\sigma _2}$   belong to the chart $M_{13}$.  In the paper~\cite{BT-1}, see proof of Proposition 4,  it is showed that the transition map from the coordinates of the   chart $M_{13}$ to the coordinates of the chart  $M_{12}$  induces a homeomorphism of the space of parameters of the main stratum $F=\C \setminus\{0,1\}$ that is given  by the involution $c\to \frac{c}{c-1}$.  This homeomorphism extends to a  homeomorphism of the universal space of parameters $\C P^1$.  The strata $W_{\sigma _1}$ and $W_{\sigma_2}$  in the chart $M_{13}$ are  limits of the main stratum $W$, when the parameter $c$ of the main stratum, written in the coordinates of the chart $M_{13}$, tends to   $\infty$. It  follows    that in the chart $M_{12}$ we have that $I_{\sigma_1}(\tilde{F}_{\sigma_1}) = I_{\sigma_2}(\tilde{F}_{\sigma_2}) = (1:1)$. The admissible polytopes $P_{\sigma}$  for the  other strata such that $W_{\sigma}\subset M_{13}\setminus (M_{12}\cap M_{13})$ belong to the boundary $\partial \Delta _{4,2}$. For them  $I_{\sigma}: \tilde{F_{\sigma}}\cong \C P^1 \to \C P^1$ is given by the map $c\to \frac{c}{c-1}$. In an analogous  way we describe these  embeddings in the coordinates of  the chart $M_{12}$ for the  strata which does not belong to the union  of these two charts.

\underline{In the case of    Grassmann manifold $G_{5,2}$:}
\begin{itemize}
\item[(1)] $F=\tilde{F}=\{((c_1:c_1^{'}), (c_{2}:c_{2}^{'}), (c_3:c_{3}^{'})) \in (\C P_{A}^1)^{3} | c_1c_{2}^{'}c_3 = c_{1}^{'}c_2c_{3}^{'}\}$;
\item[(2)] $\mathcal{F}$ is given as the blowup at the point $((1:1), (1:1), (1:1))$ of the compact non singular surface 
$\{((c_1:c_1^{'}), (c_{2}:c_{2}^{'}), (c_3:c_{3}^{'})) \in (\C P^1)^3 | c_1c_{2}^{'}c_3 = c_{1}^{'}c_2c_{3}^{'}\}$;
\item[(3)] An  explicit description of the virtual spaces of parameters  $\tilde{F}_{\sigma}$ for all admissible polytopes and the spaces of parameters $F_{\sigma}$ for all strata $W_{\sigma}$, as well as  the corresponding  embeddings $I_{\sigma} : \tilde{F}_{\sigma} \to \mathcal{F}$ and  projections $p_{\sigma} : \tilde{F}_{\sigma}\to F_{\sigma}$ is given in~\cite{BT-1}.
\end{itemize}

Together   with Proposition~\ref{gr-4}, we get
\begin{thm}
The complex Grassmann manifolds $G_{4,2}$ and $G_{5,2}$ have a canonical structure of $(8, 3)$ and $(10,4)$-manifolds respectively.
\end{thm}
\begin{rem}\label{gr-2nk}
We believe that the approach developed in~\cite{BTN}, in the case $G_{5,2}$,  for finding   virtual spaces of parameters $\tilde{F}_{\sigma}$ and an universal space of parameters  $\mathcal{F}=\cup _{\sigma}I_{\sigma}(\tilde{F}_{\sigma})$ together with the results of the current paper  brings  the proof of Axiom~\ref{universal}  for  all Grassmann manifolds $G_{k+1, q}$.
\end{rem}

%%%%%%%%%%%%%%%%%%   15

\section{The complex manifold of complete flags}

The complete complex flag manifold $F_{k+1}$ consists of flags of $k$ complex subspaces $L =(L_1\subset L_{2}\subset \ldots \subset L_{k})$ in $\C ^{k+1}$. It is a homogeneous space, which can be  represented by $F_{k+1} = U(k+1)/T^{k+1}$.  As in the case of   complex Grassmann manifolds the canonical action    of the torus $T^{k+1}$ on $\C ^{k+1}$ induces an effective action  of the torus $T^{k}$ on the manifold $F_{k+1}$. This action extends to an action of the corresponding algebraic torus. The moment map $\mu : F_{k+1} \to \R ^{k+1}$ is defined by
\begin{equation}\label{momflag}
\mu (L) = \mu _{1}(L_1) + \mu _{2}(L_2)+\ldots +\mu _{k}(L_k),
\end{equation}
where $L=(L_1\subset L_2\subset \ldots \subset L_{k})$ and
\[
\mu _{i}(L_i) = \frac{\sum _{J}|P^{J}(L_i)|^2\delta _{J}}{\sum _{J}|P^{J}(L_i)|^2},
\]
where $J \subset \{1,\ldots k+1\}$, $|J|=i$.
%\[
%(\delta _{J})_{i}=1,\;\; i\in J,\;\; (\delta _{J})_{i}=0,\;\;  i\notin J.
%\]

The image $\mu _{i}(L_i)$ is the hypersimplex $\Delta _{k+1,i}$  since  it is the image by the moment map of the Grassmann manifold $G_{k+1,i}$. Therefore,  the image of the map $\mu$ is the Minkowski sum of the hypersimplices $\Delta _{k+1,i}$, $1\leq i\leq k$,  which  is known to be the   standard permutahedron $Pe^{k}$.  Recall that the standard permutahedron $Pe^{k}$  is a convex hull over the set of $(k+1)!$ points given by $\sigma(0,1,2,\ldots k)$, where $\sigma$ runs through the symmetric group $S_{k+1}$.

The algebraic torus $(\C ^{*})^{k+1}$ acts on $F_{k+1}$.  In an analogous way as for the complex Grassmann manifolds we prove:

\begin{thm}
The complete complex flag manifold $F_{k+1}$  satisfies the first five axioms   of an  $(2n, k)$-manifold, where $2n=k(k+1)$.
\end{thm}

We  provide the  proof in detail  for the case $k=2$ and, furthermore, we show that $F_3$ satisfies the sixth axiom as well.  For the manifold  $F_{k+1}$ the atlas required by  Axiom~\ref{atlas} can be constructed as follows: 

The charts are the sets $M_{i_1, i_1i_2,\ldots ,i_1\ldots i_k}=\{ L=(L_1\subset L_2\subset\ldots \subset L_{k})\in F_{k+1}\; |\; P^{i_1}(L_1)\neq 0, P^{i_1i_2}(L_2)\neq 0, \ldots , P^{i_1\ldots i_k}(L_k)\neq 0, \; 1\leq i_1,\ldots ,i_k\leq k+1, \; i_p\neq i_q, \; p\neq q, 1\leq p,q\leq k\}$. Then any point $L\in M_{i_1, i_1i_2, \ldots, i_1\ldots i_k}$ can be represented by the matrix $A_{L}$ such that $a_{i_jj}=1$ and $a_{i_jp}=0$, $j+1\leq p\leq k$. The coordinate map
$u_{i_1,i_1i_2, \ldots , i_1\ldots i_k} : M_{i_1, i_1i_2, \ldots , i_1\ldots i_k}\to \C ^{\frac{k(k-1)}{2}}$ is  given by
\[
u_{i_1,i_1i_2, \ldots , i_1\ldots i_k}(L) = (a_{ij}),\;  i\neq i_p\; \text{and}\; j<p, \; 1\leq p\leq k.
\]

\begin{prop}\label{F3}
 The manifold $F_3$ has a structure of  an $(6,2)$-manifold and $F_{3}/T^2 \cong S^1 \ast \C P^1\cong S^4$
\end{prop}
\begin{proof}
 We consider the atlas for  $F_{3}$ whose charts are given by
\[
M_{i,ij} = \{ L =(L_1\subset L_2) \in F_{3}\; |\: P^{i}(L_1)\neq 0, P^{ij}(L_2)\neq 0\}, \; 1\leq i,j\leq 3, \; i\neq j,
\]
and the homeomorphisms $u_{i,ij} : M_{i,ij}\to \C ^3$  are  defined as above.   Any point $L\in M_{i,ij}$ can be represented by the matrix $A_{L} = (a_{sl})$, $1\leq s\leq 3$, $1\leq l\leq 2$  such that $a_{i1}=1$, $a_{i2}=0$ and  $a_{j2}=1$. Then
$u_{i,ij}(L) = (a_{sl})$, where $sl\neq i1, i2,j2$.  This atlas is invariant under the canonical action of the algebraic torus $(\C ^{*})^{3}$.

Let us consider the chart $M_{1,12}$. Any point $L\in M_{1,12}$ represents by the matrix
\[
\left(\begin{array}{cc}
1 & 0\\
a_1 & 1\\
a_2 & a_3
\end{array}\right)
\;\; \text{and}\;\; u_{1,12}(L) = (a_1,a_2,a_3).
\]
The moment map~\eqref{momflag} in this chart writes as
\[
\mu (L) = \frac{1}{1+|a_1|^2 + |a_2|^2}((1,0,0)  + |a_1|^2(0,1,0)+|a_2|^2(0,0,1)) +\]
\[ + \frac{1}{1 + |a_3|^2+|a_1a_3-a_2|^2}((1,1,0)+|a_3|^2(1,0,1)+|a_1a_3-a_2|^2(0,1,1)).
\]
The action of the algebraic torus $(\C ^{*})^{2}$ on $\C ^{3}=u_{1,12}(M_{1,12})$,  induced by the action of $(\C ^{*})^{3}$ on $F_3$, is, in the chart $M_{1,12}$,  given by
\[
(t_1,t_2)\cdot (a_1,a_2,a_3) = (t_1a_1, t_2a_2, \frac{t_2}{t_1}a_3).
\]
Thus, the orbits  for this action   are as follows:
\begin{enumerate}
\item[(1)] if $a_i\neq 0$, $1\leq i\leq 3$ and $a_1a_3-a_2\neq 0$  it is the  hypersurface $\frac{z_2z_3}{z_1}=c$, where $c\in \C \setminus \{0,1\}$;
\item[(2)]if all $a_i\neq 0$ and $a_1a_3-a_2=0$ it is the hypersurface $\frac{z_2z_3}{z_1}=1$;
\item[(3)] if $a_i=0$ and $a_s, a_l\neq 0$  it is the coordinate subspace $\C _{ls}$;
\item[(4)] if $a_i, a_s=0$ and $a_l\neq 0$ it is the coordinates axis $\C_{l}$;
\item[(5)] if all $a_i=0$ it is the point $(0,0,0)$.
\end{enumerate}

Note that the orbits given by (1) form the main stratum and they are parametrized by $c\in \C \setminus \{0,1\}$. All other strata consist of one orbit. Since the main stratum is an everywhere dense  set in $F_{3}$, it follows that  all other orbits can be parametrized using this  parametrization of the main stratum.

The admissible polytopes for the chart $M_{1,12}$ are:
\begin{enumerate}
\item[(1)] the  hexagon $Pe^{2}$;
\item[(2)] the quadrilateral $Q$  over the vertices $(2,1,0), (2,0,1), (1,2,0), (1,0,2)$;
\item[(3)] the   three quadrilaterals $Q_i$, $1\leq i\leq 3$  defined by the vertices:
$Q_1 -  (2,1,0), (2,0,1), (1,0,2), (0,1,2)$; $Q_2 - (2,1,0), (2,0,1), (1,2,0), (0,2,1)$; $Q_3 - (2,1,0), (1,2,0), (0,2,1), (0,1,2)$;
\item[(4)] the  three  segments $I_{i}$, $1\leq i\leq 3$   defined by the vertices: $I_1 - (2,1,0), (0,1,2)$; $I_2 - (2,1,0), (2,0,1)$, $I_3  - (2,1,0), (1,2,0)$;
\item[(5)] the vertex $(2,1,0)$.
\end{enumerate}
We see that the orbit over the square $Q$ can be parametrized by the point $c=1$, the orbits over $Q_1$ by the point $c=\infty$, while the orbits over $Q_2$ and $Q_3$ by the point $c=0$. Also  the orbits over the intervals $I_1$ and $I_2$ can be parametrized by any point $c\in \C \cup \{\infty\}$, while the orbit over $I_3$ by the point $c=0$ and every  fixed point by any point  $c\in \C \cup \{\infty\}$. Note that $Q_2$, $Q_3$ and $I_3$ glue together to give the interior of the polytope  $Pe^{2}$ and the orbits over them  are all parametrized by the  point  $c=0$.

By considering the other charts it is easy to generalize this case and conclude  that all admissible polytopes are given by  the hexagon and its faces and  six quadrilaterals in the hexagon and their faces. The quadrilateral complementary to $Q$, as well as its edge that belongs to the interior of the  hexagon, can be  parametrized by the point $c=1$, while  the quadrilateral complementary to $Q_1$ as well as its edge that  belongs to the interior of hexagon can be parametrized  by the point $c=\infty$. In this way we prove that $\widehat{\mu}^{-1}(x) \cong \C P^1$ for all points $x$ from the interior of the hexagon. It follows from Theorem~\ref{join}  that $F_{3}/T^3\cong (\partial Pe^{2})\ast \C P^1$.

\end{proof}

\begin{rem} We believe that the approach developed for finding an universal space of parameters and  virtual spaces of parameters  in the case of Grassmann manifolds, can be in an analogous way  applied to the complete flag manifolds as well.
\end{rem}

%%%%%%%%%%%%%%%%%%  16

\section{The orbit spaces of some  key examples}
%We note that it is  is proved in~\cite{BTN} (Section 12) that the six axioms which define $(2n,k)$-manifold  and their %consequences lead to the description of the homotopy type for $G_{5,2}/T^4$ meaning that $G_{5,2}/T^5$  is  %homotopy equivalent to $S^3\ast \C P^2$. }

%As an application of the results obtained in~\ref{main-sp-orb}, 
Let us consider the Grassmann manifold $G_{4,2}$  of the complex two-dimensional subspaces in $\C ^{4}$ as an example of $(8,3)$-manifold over the hypersimplex $\Delta _{4,2}$.
%This action is not effective, but it induces an effective action of the torus $T^3$.
 As it is remarked in Section 14.2,  all points $x\in \partial \Delta _{4,2}$ are simple and $\widehat{\mu}^{-1}(x)$ is homeomorphic to $\C P^1$ for all points  $x\in \stackrel{\circ}{\Delta} _{4,2}$. Theorem~\ref{join} implies that the orbit space $G_{4,2}/T^4$ is homeomorphic to the space $(\partial \Delta_{4,2})\ast \C P^1$. This statement is one of the key result of  the paper~\cite{BT-1}.

The action of  $T^4$ on the complex projective space $\C P^5$  is also studied in detail in~\cite{BT-1}. This action is  given as the composition of the second symmetric power representation $\Lambda ^{2} : T^4\to T^6$   and the canonical action of the torus $T^6$ on $\C P^6$. This action is not effective, but it induces an effective action of the torus $T^3$, for which the Pl\"ucker map $\mathcal{P} : G_{4,2}\to \C P^{5}$ is an equivariant map. As a result, we obtain on $\C P^5$ a structure of $(10,3)$-manifold with the almost moment map $\mu : \C P^5 \to \Delta _{4,2}$. The space of parameters $F$ of the main stratum can be identified with 
 $\{(c_1,c_2)\in \C ^{2} | c_1c_2\neq 0\}$. All points $x\in \partial \Delta _{4,2}$  are simple and  $\widehat{\mu}^{-1}(x)$ is homeomorphic to the complex projective plane $\C P^2$ for any point $x\in \stackrel{\circ}{\Delta} _{4,2}$.  Applying again  Theorem~\ref{join} we obtain that $\C P^5/T^4$ is homeomorphic to the space $(\partial \Delta_{4,2})\ast \C P^2$. Moreover, the Pl\"ucker embedding induces the embedding  $\hat{\mathcal{P}} : G_{4,2}/T^3\to \C P^{5}/T^3$, which,  by the stated homeomorphisms, produces   the embedding $(\partial \Delta_{4,2})\ast \C P^1\to (\partial \Delta_{4,2})\ast \C P^2$,  whose restriction to $\partial \Delta _{4,2}$ is given by an identity map, while its restriction  to $\C P^1$  is given by an inclusion in $\C P^2$. This  is one more key result of the paper~\cite{BT-1}.
%As we already noted in Section~\ref{universal}, it is proved   in~\cite{BT-1} and~\cite{BTN} that     the Grassmannians $G_{4,2}$ %and $G_{5,2}$ satisfy the sixth axiom  of $(2n,k)$-manifolds.

Let us consider now the Grassmann manifold $G_{5,2}$ of the complex two-dimensional subspaces in $\C ^{5}$ as an example of $(12,4)$-manifold over the hypersimplex $\Delta _{5,2}$. In this case Theorem~\ref{join} does not apply, since not all  points from $\partial \Delta _{5,2}$ are  simple. More precisely, the points which belong to the interiors of the octahedra are singular. Nevertheless, in this  case, we can describe the orbit space $G_{5,2}/T^5$ by providing an explicit constructions of the universal space of parameters, virtual spaces of parameters as well as the construction of the corresponding  projection and embedding maps. This is realized in the paper~\cite{BTN} and  it is proved, as a  key result,  that $G_{5,2}/T^5$ is homotopy equivalent to the wedge $(S^{2}\ast \C P^1)\vee (S^3\ast \C P^2)$.     Note that $S^2\cong \partial\Delta _{4,2}$, $S^3\cong \partial\Delta _{5,2}$ and $S^2\ast \C P^1$ is homeomorphic to the orbit space  $G_{4,2}/T^4$.

%%%%%%%%%%%%%%%%%%    17

\section{Examples for the construction of    virtual spaces of  parameters for $G_{k+1, q}$.}
The construction of  virtual spaces of parameters should use    the fact that the main stratum is an open,  dense set in  an  $(2n,k)$-manifold $M^{2n}$. The idea for introducing these spaces as well to call  them as virtual can be illustrated by  our work on the  description of the orbit space $G_{5,2}/T^5$. see \cite{BTN}.

We start by considering the fixed points in an $(2n,k)$-manifold $M^{2n}$ . For a  vertex $v$ of the polytope $P^{k}$ let $P^{v}_{\mathfrak{\sigma}} = \{ P_{\sigma}\in P_{\mathfrak{\sigma}} | v \in  P_{\sigma}\}$. We introduce a partial ordering on the set $P^{v}_{\mathfrak{\sigma}}$ by : $P_{\sigma _{1}} < P_{\sigma _{2}}$  if and only if  $P_{\sigma _{1}}$ is a face of the polytope $P_{\sigma _{2}}$. In particular $v < P_{\sigma}$ for any $P_{\sigma} \in P_{\mathfrak{\sigma}}^{v}$. Proposition~\ref{inclusion} implies that if  $P_{\sigma _{1}}<P_{\sigma _{2}}$ then $I_{\sigma _{2}}(\tilde{F}_{\sigma _{2}})\subset I_{\sigma _{1}}(\tilde{F}_{\sigma _{1}})$ and , in particular,  $I_{\sigma}(\tilde{F}_{\sigma})\subset I_{i}(\tilde{F}_{i})$, for any $P_{\sigma}\in P^{v}_{\mathfrak{\sigma}}$, where by $\tilde{F}_{i}$ is denoted the virtual space of parameters of the $i$-th vertex $v_i$.

%Consider now the Grassmann manifold $G_{q+1,k}$, let $L\in G_{q+1, k}$ be a fixed point for $T^{q+1}$-action and $\mu (L) =v%$. There is a unique chart $M_{I}$ such that $L\in M_{I}$, moreover all local coordinates of the point $L$ are zero.It follows from %Lemma~\ref{embmain} that the main stratum in the chart $M_{I}$ is given by~\eqref{mainstr}, so when $z_{ij}\to 0$, the limit of %the points from  the space of parameters $F$ of  the main stratum is not defined, that is it can be an arbitrary point from the %universal space of parameters $\mathcal{F}$, which is the compactification for $F$ that we consider. Altogether we obtain
%\begin{prop}
%For the Grassmann manifold $G_{k+1,q}$ and the vertex $v$ of $\Delta _{k+1, q}$ the virtual space of parameters $\tilde{F}_{v}%$ coincides with the universal space of parameters $\mathcal{F}$.
%\end{prop}

In the case of  Grassmann manifold $G_{k+1, q}$ due to an action of the symmetric group $S_{k+1}$, we obtain
\begin{lem}
 The spaces $\tilde{F}_{i}$ and $\tilde{F}_{j}$  are  homeomorphic  for any two vertices $v_i, v_j$ of $\Delta _{k+1,q}$.
\end{lem}

In order to illustrate more closely  the idea for introducing  virtual spaces of parameters as well an   universal space of parameters,  let us consider the stratum $W_{\sigma}$, $\sigma =\{ 12, 13, 14, 15, 24, 34, 45\}$ in $G_{5,2}$.
  In the local coordinates $z_{ij}^{12}$ of the chart $M_{12}$ this stratum is given by the equations: $z^{12}_{11}=z^{12}_{31}=0$. Following~\cite{BT-1}, let  $(c_{i, 12}:c_{i,12}^{'})$, $1\leq i\leq 3$ be such  coordinates in $(\C P^{1})^{3}$  that the main stratum in the local coordinates of the chart $M_{12}$ is given by the system of equations $c_{1,12}^{'}z_{11}^{12}z_{22}^{12} = c_{1,12}z_{21}^{12}z_{12}^{12}$, $c_{2,12}^{'}z_{11}^{12}z_{32}^{12} = c_{2,12}z_{31}^{12}z_{12}^{12}$, $c_{3, 12}^{'}z_{21}^{12}z_{32}^{12}=c_{3,12}z_{31}^{12}z_{22}^{12}$.   The condition that $z^{12}_{11}, z^{12}_{31}\to 0$ implies that in $\bar{F}_{12}$ we have that  $(c_{1,12}:c_{1,12}^{'}) = (0:1)$, $(c_{3.12}:c_{3,12}^{'})=(1:0)$, while the limit of the points , $(c_{2,12}:c_{2,12}^{'})$ in $\bar{F}_{12}$ is {\it not defined}. Here $\bar{F}_{12}$ is the closure of the space of parameters $F_{12}$ of the main stratum in $\C P^1\times \C P^1\times \C P^1$  considered  in the chart $ M_{12}$.  We define  the virtual space of parameters  to be $\tilde{F}_{\sigma, 12} = \{((0:1), (c_{2, 12}:c_{2,12}^{'}), (1:0)), \; (c_{2,12}:c_{2,12}^{'})\in \C P^1\} \subset \bar{F}_{12}$. It contains all non defined points $(c_{2,12}:c_{2,12}^{'})\in \C P^{1}$.  Thus, the  virtual space of parameters $\tilde{F}_{\sigma, 12}$ resolves the singularities corresponding to uncertainties   when the points from the main stratum converge  to the points of  the stratum $W_{\sigma}$.

The situation is the same if consider the charts $M_{13}, M_{15}, M_{24}, M_{34}, M_{45}$ which  contain this stratum. Precisely, in the local coordinates of the chart $M_{13}$ this stratum is given by the equations $z_{11}^{13}=z_{31}^{13}=0$ and the virtual space of parameters is given by $\tilde{F}_{\sigma, 13} = \{((0:1), (c_{2,13}:c_{2,13}^{'}), (1:0)), \; (c_{2,13}:c_{2,13}^{'})\in \C P^1\} \subset \bar{F}_{13}$. In the local coordinate of the chart $M_{15}$, this stratum is defined by $z_{11}^{15}=z_{21}^{15}=0$ and as the virtual space of parameters we obtain $\tilde{F}_{\sigma, 15}=\{((c_{1,15}:c_{1,15}^{'}), (0:1), (0:1)), \; (c_{1,15}:c_{1,15}^{'})\in \C P^1\}\subset \bar{F}_{15}$.  In the chart $M_{24}$ the stratum $W_{\sigma}$ is given by $z_{22}^{24}=z_{32}^{24}= 0$ and the virtual space of parameters is given by $ \tilde{F}_{\sigma, 24} = \{((0:1), (0:1), (c_{3,24}:c_{3,24}^{'})), \; (c_{3,24}:c_{3,24}^{'})\in \C P^1\}\subset  \bar{F}_{24}$. Further, in the local coordinates of the chart $M_{34}$ this stratum is defined by $z_{22}^{34}=z_{32}^{34}=0$ and the virtual space of parameters is given by $ \tilde{F}_{\sigma, 34}= \{ ((0:1), (0:1), (c_{1,34}:c_{3,34}^{'})), (c_{3, 34}:c_{3,34}^{'})\in \C P^1\}\subset \bar{F}_{34}$.  In the local coordinates of the chart $M_{45}$ this stratum is defined by $z_{21}^{45}=z_{31}^{45}=0$ and the virtual space of parameters is given by $\tilde{F}_{\sigma, 45}= \{ ((1:0), (1:0), (c_{3,45}:c_{3, 45}^{'})), \; (c_{3, 45}:c_{3, 45}^{'})\in \C P^1\}\subset \bar{F}_{45}$.

This stratum belongs to the chart $M_{14}$ as well, in this chart all  coordinates of its points are non-zero and it is given by the equations:  $z_{11}^{14}z_{22}^{14}=z_{21}^{14}z_{12}^{14}, \; z_{11}^{14}z_{32}^{14}=z_{31}^{14}z_{12}^{14}, \; z_{21}^{14}z_{32}^{14} = z_{31}^{14}z_{22}^{14}$. These equations  imply that the virtual space of parameters $\tilde{F}_{\sigma, 14}$ , considering it as a subset in the closure $\bar{F}_{14}$, is given by the point $((1:1), (1:1), (1:1))\in\bar{F}_{14}$. According to Axiom~\ref{universal}, a  virtual space of parameters $\tilde{F}_{\sigma}$ is  defined by   the stratum $W_{\sigma}$  and its  construction  should not depend on the choice of the charts  that  contains $W_{\sigma}$.  The virtual space of parameters,   in the local coordinates of the  charts $M_{12}, M_{13}, M_{15}, M_{24}, M_{34}, M_{45}$, for the stratum we consider here, is  homeomorphic to $\C P^1$,  while in the chart $M_{14}$, when approaching    the limit  point  $((1:1), (1:1), (1:1)))\in \bar{F}_{14}$, a singularity  does not appear. Therefore, in order to obtain the space $\tilde{F}_{\sigma, 14}$  we need to consider  the blow up of the space $\bar{F}_{14}$  at the point $((1:1), (1:1), (1:1)))$.

%%%%%%%%%%%%%%%%%%%%   18

\section{Gel'fand-Serganova example}~\label{Grassmann}
According to  Lemma~\ref{bound}, for any stratum $W_{\sigma}$  there is an inclusion  $\bar{\partial}W_{\sigma} \subseteq  \cup W_{\overline{\sigma}}$, where ${\overline{\sigma}}$ runs  through all  admissible  subsets of  $\sigma$ given. We show here that this inclusion is strict in general, meaning   that there exist such  strata $W_{\sigma}$ and $W_{\bar{\sigma}}$ for which 
\[
\bar{\sigma}\subset \sigma;\;\;      \bar{\partial} W_{\sigma}\cap W_{\bar{\sigma}}\neq \emptyset,\;\;    \text{but}\;\; W_{\bar{\sigma}}\not\subset \bar{\partial} W_{\sigma}.
\]
 Such  a pair  of strata $W_{\sigma}$ and $W_{\bar{\sigma}}$ is given in  the paper of Gel'fand-Serganova~\cite{GS}.  The space of parameters of the strata  $W_{\sigma}$ in $G_{7,3}$  is a point, while its $\bar{\partial}$-boundary has non empty intersection with the stratum  $W_{\bar{\sigma}}$ whose space of parameters has non-zero dimension.
\
%In the paper of Gel'fand-Serganova (UMN, 1987)  it is given the description\\ of the action of $T^6$ on the Grassmann %manifold $G_{7,3}$ from which we deduce the example of our $(2n,k)$-manifold for which
%\[
%\partial W_{\sigma}\subset \cup _{\tilde{\sigma}\subset \sigma} W_{\tilde{\sigma }}.
%\]

Consider  a point $L\in G_{7,3}$ given by the  matrix
\[
\left(\begin{array}{ccccccc}
1 & 0 & 0 & 0 & b_1 & c_1 & d_1\\
0 & 1 & 0 & a_2 & 0 & c_2 & d_2\\
0 & 0 & 1 & a_3 & b_3 & 0 & d_3
\end{array}\right),
\]
such that
\begin{equation}\label{cond}
d_1d_2d_3\neq 0, \;\; d_1c_2=d_2c_1,\;\; d_1b_3=d_3b_1, \;\; d_2a_3=a_2d_3, \;\; a_i, b_i, c_i\neq 0.
\end{equation}
Form the condition~\eqref{cond} it follows that
\[ a_3b_1c_2=a_2b_3c_1.
\]
The charts  on the Grassmann manifolds are  indexed  by the non-zero Pl\"ucker coordinates (see  Subsection~\ref{gr}).  It implies  that the set of points  which   satisfy~\eqref{cond} form  the stratum  $W$ obtained as the intersection of the sets $X_{ijk}$, $1\leq i<j<k\leq 7$, where $X_{ijk} = Y_{ijk}$ for  $ijk = 126, 135, 234, 147, 257, 367$ and $X_{ijk}=M_{ijk}$ for the other indices.

On the other hand, according to Subsection~\ref{gr}, the $(\C ^{*})^{6}$-orbit of a  point\\  $(0, b_1, c_1, d_1, a_2, 0, c_2, d_2, a_3, b_3, 0, d_3)$,  in the local coordinates of the chart $M_{123}$, is given by
\[
(0,\frac{t_5}{t_1}b_1, \frac{t_6}{t_1}c_1, \frac{t_7}{t_1}d_1, \frac{t_4}{t_2}a_2, 0,  \frac{t_6}{t_2}c_2, \frac{t_7}{t_2}d_2, \frac{t_4}{t_3}a_3, \frac{t_5}{t_3}b_3, 0, \frac{t_7}{t_3}d_3),
\]
which can be  written as
\begin{equation}\label{orb}
(0, \tau _{1}b_1, \tau _{2}c_1, \tau _{3}d_1, \tau _{4}a_2, 0, \tau _{5}c_2, \frac{\tau _{3}\tau _{5}}{\tau _{2}}d_2, \tau _{6}a_3, \frac{\tau _{1}\tau_{5}\tau _{6}}{\tau _{2}\tau _{4}}b_3, 0, \frac{\tau _{3}\tau _{5}\tau _{6}}{\tau _{2}\tau _{4}}d_3).
\end{equation}
Therefore, all points of the form~\eqref{orb}  belong to the one $(\C ^{*})^{6}$-orbit,  that is the stratum $W$ consists of this one  $(\C ^{*})^{6}$-orbit.
%Thus the set $F_{\sigma}$ of parameters of the stratum $W_{\sigma}$ is a point.

Consider a point $L_1\in G_{7,3}$ given by the matrix
\begin{equation}\label{pointm}
\left(\begin{array}{ccccccc}
1 & 0 & 0 & 0 & b_1 & c_1 & 0\\
0 & 1 & 0 & a_2 & 0 & c_2 & 0\\
0 & 0 & 1 & a_3 & b_3 & 0 & 0
\end{array}\right),
\end{equation}
where  $a_3b_1c_2=a_2b_3c_1$.  Let   $W^{'}$ be a stratum  that  contains this point. It is the intersection of the sets $X_{ijk}$ where $X_{ijk}= M_{ijk}$ for $ijk =123, 124, 125$, $135, 136, 235,236, 456$ and $X_{ijk}=Y_{ijk}$ for the other indices. It implies that the stratum  $W^{'}$ consists of the points that can be represented by the matrices
\[
\left(\begin{array}{ccccccc}
1 & 0 & 0 & 0 & b_1 & c_1 & 0\\
0 & 1 & 0 & a_2 & 0 & c_2 & 0\\
0 & 0 & 1 & a_3 & b_3 & 0 & 0
\end{array}\right),
\]
such that $a_3b_1c_2\neq - a_2b_3c_1$.

On the one hand,  it is obvious that the point $L_1$ from the stratum $W^{'}$ as well as its $(\C ^{*})^{6}$-orbit,  which is given by
$(0, \tau _{1}b_1, \tau _{2}c_1, 0, \tau _{3}a_2, 0, \tau _{4}c_2, 0, \tau _{5}a_3, \frac{\tau _{1}\tau_{4}\tau _{5}}{\tau _{2}\tau _{3}}b_3, 0, 0)$ such that  $a_3b_1c_2=a_2b_3c_1$,   belong to $\bar{\partial}$- boundary of the stratum  $W$.

On the other hand,  the stratum  $W^{'}$ does not coincide with the  $(\C ^{*})^{6}$-orbit of a point $L_1$. Namely, this  $(\C ^{*})^{6}$-orbit is contained in $W^{'}$,  but its points  satisfy relation $a_3b_1c_2= a_2b_3c_1$, while the points from $W^{'}$ are defined by the  weaker relation  $a_3b_1c_2\neq - a_2b_3c_1$.

It implies that
\[
\bar{\partial} W \cap W^{'}\neq \emptyset \;\; \text{and}\;\; W^{'} \not\subset \bar{\partial}W .
\]

% and  the set of parameters $F_{\sigma ^{'}}$ of the stratum $W_{\sigma ^{'}}$   is two-dimensional.

Gel'fand-Serganova leads to the following important comment:
\begin{cor}
The map $\eta _{\sigma, \bar{\sigma}} : F_{\sigma}\to F_{\bar{\sigma}}$, whose existence is stated by Axiom~\ref{leaf}, is not a surjection in general.
\end{cor}

\begin{rem}
It follows from the results of the paper  of Gel'fand-Serganova~\cite{GS} that the map $\eta _{\sigma, \bar{\sigma}} : F_{\sigma}\to F_{\bar{\sigma}}$ is a surjection for the Grassmann manifolds $G_{k+1,2}$ and $G_{6,3}$ .
\end{rem}

\bibliographystyle{amsplain}

 Victor M.~Buchstaber\\
Steklov Mathematical Institute, Russian Academy of Sciences\\
Gubkina Street 8, 119991 Moscow, Russia\\
E-mail: buchstab@mi.ras.ru
\\ \\ \\

Svjetlana Terzi\'c \\
Faculty of Science and Mathematics, University of Montenegro\\
Dzordza Vasingtona bb, 81000 Podgorica, Montenegro\\
E-mail: sterzic@ucg.ac.me
\end{document}